\documentclass[12pt]{amsart}

\usepackage{pb-diagram}

%\usepackage{showkeys}  % THIS SHOWS LABELS

%\oddsidemargin 1.2 cm
%\evensidemargin 1 cm
%\addtolength{\textwidth}{1.5cm}
%\addtolength{\rightmargin}{10cm}
%\addtolength{\textheight}{0.5cm}
%\addtolength{\topmargin}{-0.5cm}

\oddsidemargin 0.5 cm
\evensidemargin 0.5 cm
\addtolength{\textwidth}{ 2.8 cm}
\addtolength{\textheight}{1.4cm}
\addtolength{\topmargin}{-1.2cm}

\usepackage{amsfonts}
\usepackage{amsmath}
\usepackage{amssymb}

\newcommand{\dbar}{\overline{\partial}}

\newcommand{\ddt}[1]{\frac{\partial #1}{\partial t}}

\newcommand{\ddbar}{\sqrt{-1}\partial\dbar}

\newtheorem{theorem}{Theorem}[section]
\newtheorem{proposition}{Proposition}[section]
\newtheorem{lemma}{Lemma}[section]
\newtheorem{conjecture}{Conjecture}[section]

\newtheorem{corollary}{Corollary}[section]

\numberwithin{equation}{section}

\begin{document}

\title[Collapsing behavior of Ricci-flat K\"ahler metrics]{Collapsing behavior of Ricci-flat K\"ahler metrics and long time solutions of the K\"ahler-Ricci flow}

\author[Jian Song, Gang Tian and Zhenlei Zhang]{Jian Song$^*$, Gang Tian$^{**}$, Zhenlei Zhang$^\dagger$}

\address{$^*$ Department of Mathematics, Rutgers University, Piscataway, NJ 08854}

\address{$^{**}$ Department of Mathematics, Peking University, Beijing }
\address{$^\dagger$Department of Mathematics,  Capital Normal University, Beijing
}

\thanks{Research supported in
part by NSF DMS-1711439, NSFC11331001, NSFC(11825105, 11821101, 11771301) and Fok Foundation 161001.}

\begin{abstract} We prove a uniform diameter bound for long time solutions of the normalized K\"ahler-Ricci flow on an $n$-dimensional projective manifold $X$ with semi-ample canonical bundle under the assumption that the Ricci curvature is uniformly bounded for all time in a fixed domain containing a fibre of $X$ over its canonical model $X_{can}$. This assumption on the Ricci curvature always holds when the Kodaira dimension of $X$ is $n$, $n-1$ or when the general fibre of $X$ over its canonical model is a complex torus. In particular,  the normalized K\"ahler-Ricci flow converges in Gromov-Hausdorff topolopy to its canonical model when $X$ has Kodaira dimension $1$ with $K_X$ being semi-ample and the general fibre of $X$ over its canonical model being a complex torus. We also prove the Gromov-Hausdorff limit of collapsing Ricci-flat K\"ahler metrics on a holomorphically fibred Calabi-Yau manifold is unique and is homeomorphic to the metric completion of the corresponding twisted K\"ahler-Einstein metric on the regular part of its base.

\end{abstract}

\maketitle

\section{Introduction}

Hamilton introduced the Ricci flow to study the global structures and classification of Riemannian manifolds in his seminal work \cite{H1}. It was used to solve Thurston's geometrization conjecture for $3$-manifolds \cite{P1, P2, P3, KL, MT}. The Ricci flow preserves the K\"ahlerian metrics: If $(X,g_0)$ is compact K\"ahler manifold of complex dimension $n$, then any solution $g(t)$ of the Ricci flow with initial condition $g_0$ must be K\"ahler. This leads to the K\"ahler-Ricci flow which has been very useful in K\"ahler geometry: 
 \begin{equation}\label{unkrflow}
\left\{
\begin{array}{l}
{ \displaystyle \ddt{g} = -Ric(g) ,}\\
\\
g|_{t=0} =g_0 .
\end{array} \right.
\end{equation}
In \cite{Cao1}, adapting certain arguments in \cite{Y1}, Cao first studied the K\"ahler-Ricci flow and used this to give an alternative proof of  the Calabi conjecture.
 
The Ricci flow always has a solution for $t$ small. It was proved in \cite{TZha} that \eqref{unkrflow} admits a maximal solution on $[0, T)$, where 
\begin{equation} \label{T}
T \,=\, \sup \{ t \in \mathbb{R} \ | \ [\omega_0] + t  [K_X] >0 \}.
\end{equation}
where $\omega_0$ denotes the K\"ahler form of $g_0$. This gives a sharp local existence theorem. Therefore, the K\"ahler-Ricci flow admits a long time solution if and only if the canonical class is nef.  It was also shown in \cite{Cao1} that the K\"ahler-Ricci flow after normalization always converges exponentially fast to a K\"ahler-Einstein metric if the first Chern class is negative or zero. If the first Chern class is positive, $T$ is finite and one can study finer behavior of $g(t)$ as $t$ tends to $T$. This has been extensively studied (see \cite{P4, SeT, TZhu1,  PSSW, CS, TiZ1, B, CW} etc.). One would hope that the K\"ahler-Ricci flow should deform any initial K\"ahler metric to a K\"ahler-Einstein metric, however, most K\"ahler manifolds do not admit definite or vanishing first Chern class and so the flow will in general develop singularities. An Analytic Minimal Model Program (AMMP) through Ricci flow was initiated by Song-Tian more than a decade ago to study birational classification of compact K\"ahler manifolds, including algebraic manifolds. We refer the readers to \cite{SoT3} for a description of the AMMP.
One crucial problem in this program is to study formation of singularities along the K\"ahler-Ricci flow. It is conjectured by Song-Tian in \cite{SoT3} that the K\"ahler-Ricci flow will deform a projective variety $X$ of nonnegative Kodaira dimension, to its minimal model via finitely many divisorial metric contractions and metric flips in Gromov-Hausdorff topology, then eventually converge to a unique canonical metric of Einstein type on its unique canonical model. The existence and uniqueness is proved in \cite{SoT3} for the analytic solutions of the K\"ahler-Ricci flow on algebraic varieties with log terminal singularities and the K\"ahler-Ricci flow can be analytically and uniquely extended through divisorial contractions and flips \cite{SoT3}. Finite time geometric surgeries in terms of Gromov-Hausdorff topology are introduced and established in the case of K\"ahler surfaces and more generally, flips induced by Mumford quotients in \cite{SW1, SW2, SW3, SY, S1}. An alternative approach to understand the K\"ahler-Ricci flow through singularities was proposed in \cite{LT} in the frame work of K\"ahler quotients by transforming the parabolic complex Monge-Amp\`ere equation into an elliptic  $V$-soliton equation.

In this paper, we are interested in geometric behavior of long time solutions of the K\"ahler-Ricci flow. We consider the normalized K\"ahler-Ricci flow on an $n$-dimensional projective manifold $X$ defined by
 \begin{equation}\label{krflow}
\left\{
\begin{array}{l}
{ \displaystyle \ddt{g} = -Ric(g) - g,}\\
\\
g|_{t=0} =g_0 .
\end{array} \right.
\end{equation}
with the initial K\"ahler metric $g_0$. The long time existence of the flow (\ref{krflow}) is equivalent to the canonical class $K_X$ being nef. A projective manifold of nef canonical model is called a minimal model. The well-known abundance conjecture in birational geometry predicts that  $K_X$ being nef is equivalent to $K_X$ being semi-ample and the conjecture holds up to complex dimension $3$. In this paper we will always assume the canonical bundle $K_X$ to be semi-ample, and a uniform scalar curvature bounded is established for the global solutions of the flow (\ref{krflow}).  

The Kodaira dimension is the algebraic dimension measuring the size of the pluricanonical system of the underlying K\"ahler manifold $X$. When the canonical bundle $K_X$ is semi-ample or nef, the Kodaira dimension of $X$ must be a nonnegative integer no greater than $\dim_\mathbb{C} X=n$.
We will discuss some of the known results by Kodaira dimension ${\rm Kod}(X)$.

 When the ${\rm Kod}(X) =n$,  the canonical class of $X$ is big and nef and it was proved in \cite{Ts, TZha} that  the flow (\ref{krflow})  converges weakly to the unique K\"ahler-Einstein current on the canonical model $X_{can}$ of $X$ which is smooth on $X_{can}^\circ$, the regular set of $X_{can}$. Recently, it was proved in \cite{S2} that the metric completion of such a smooth K\"ahler-Einstein metric on $X_{can}$ is homeomorphic to the projective variety $X_{can}$ itself. This result was used in \cite{W} to obtain a uniform diameter bound for the flow (\ref{krflow}). In the special case when $K_X$ is ample,  it was shown in \cite{Cao1} that the flow will always converge smoothly to the unique K\"ahler-Einstein metric on $X$.

When ${\rm Kod}(X)=0$, $X$ must a Calabi-Yau manifold, as we mentioned above, the unnormalized flow (\ref{unkrflow}) converges smoothly \cite{Cao1} to the unique Ricci-flat K\"ahler metric in the initial K\"ahler class.

When  ${\rm Kod} (X)=\kappa$ and $1\leq \kappa \leq n-1$,  for sufficiently large $m$, the canonical ring is finitely generated and the pluricanonical system $|mK_X|$ induces a unique projective morphism
$$\Phi: X \rightarrow X_{can}$$
where $X_{can}$ is the unique canonical model of $X$ and $\dim X_{can} = \kappa$. $X_{can}$ has canonical singularities and the map $\Phi$ is a holomorphic fibration of $(n-\kappa)$-dimensional manifolds of vanishing first Chern class over $X_{can}^\circ$, the set of smooth points on $X_{can}$ over which $\Phi$ is submersion.  $X_{can}^\circ$ is a Zariski open set of $X_{can}$. In \cite{ST1,ST2}, the twisted K\"ahler-Einstein (possibly singular) metric $g_{can}$ on $X_{can}$ is defined to be
\begin{equation}\label{twke}
Ric(g_{can}) = - g_{can} + g_{WP},
\end{equation}
where $g_{WP}$ is the Weil-Petersson current induced from the variation of Calabi-Yau fibres of $X$ over $X_{can}$. In fact, $g_{can}$ has bounded local potentials and it is smooth on $X_{can}^\circ$.  It was shown in \cite{ST1, ST2} that the solution of (\ref{krflow}) converges in current to the canonical twisted K\"ahler-Einstein current $g_{can}$ on $X_{can}$ and the local potential converges in $C^{1,\alpha}$ on any compact subset of $X^\circ= \Phi^{-1}(X_{can}^\circ)$. The local convergence is improved to the $C^0$-topology for $g(t)$ to $g_{can}$ in \cite{TWY}. 

We now state our first result of the paper.

\begin{theorem} \label{main1} Let $X$ be a projective manifold with semi-ample canonical bundle. Let $g(t)$ be the solution of the normalized K\"ahler-Ricci flow (\ref{krflow}) on $X$ with any initial K\"ahler metric $g_0$. If there exists an open domain $U$ of $X$ containing a fibre of $X$ over $X_{can}$ and $\Lambda>0$  such that 
\begin{equation}\label{curvature bound}
\sup_{U\times [0, \infty)} |Ric(g(t))|_{g(t)} <\Lambda .
\end{equation} Then there exists $D>0$ such that for all $t\in [0, \infty)$ we have
\begin{equation}\label{diambd}
diam(X, g(t)) < D ,
\end{equation}
where $diam(X, g(t))$ is the diameter of $(X, g(t))$.
\end{theorem}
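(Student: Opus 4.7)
The plan is to split the diameter of $(X, g(t))$ into a horizontal part controlled by $\operatorname{diam}(X_{can}, g_{can})$ and a vertical part controlled by the hypothesised Ricci bound on $U$, then to glue the two via horizontal lifts on the regular set $X^\circ = \Phi^{-1}(X_{can}^\circ)$ and a density argument near the singular fibres.

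For the horizontal part, I would invoke the result of \cite{S2} already recalled in the introduction: the metric completion of $(X_{can}^\circ, g_{can})$ is homeomorphic to $X_{can}$, so $D_0 := \operatorname{diam}(X_{can}, g_{can}) < \infty$. For the vertical part at the distinguished fibre $X_{y_0} \subset U$, the cohomology class of the restricted flow satisfies $[\omega(t)|_{X_{y_0}}] = e^{-t}[\omega_0|_{X_{y_0}}]$, so the fibrewise volume decays like $e^{-(n-\kappa)t}$. Combining this with the hypothesis $|\operatorname{Ric}(g(t))|_{g(t)} < \Lambda$ on $U$ and Shi-type higher derivative estimates along the K\"ahler-Ricci flow, $g(t)$ has uniformly bounded geometry on any $U' \Subset U$; a Cheeger-Gromov collapsing-with-bounded-curvature argument then forces the intrinsic diameter of $X_{y_0}$ in $g(t)$ to be uniformly bounded, in fact to shrink to zero.

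With these two pieces I would estimate $d_{g(t)}(p_0, q)$ for a fixed $p_0 \in X_{y_0}$ and an arbitrary $q \in X^\circ$: connect $y := \Phi(q)$ to $y_0$ inside $X_{can}^\circ$ by a path of $g_{can}$-length at most $D_0 + \varepsilon$, horizontally lift it to a path $\gamma$ in $X^\circ$, and use the $C^0$ convergence $\omega(t) \to \Phi^*\omega_{can}$ on compacta of $X^\circ$ from \cite{TWY} to bound the $g(t)$-length of $\gamma$ by $D_0 + \varepsilon + o(1)$. The endpoint of $\gamma$ near $X_{y_0}$ lies in $U$ for $t$ large, hence is within $C_1 + o(1)$ of $p_0$ by the fibre diameter bound, while the endpoint near $q$ is within $o(1)$ of $q$ by the uniform fibre collapse on compacta of $X_{can}^\circ$. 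Since $X^\circ$ is dense in $X$ and $d_{g(t)}$ is continuous, the resulting bound for $q \in X^\circ$ extends to all of $X$ by compactness, yielding the claimed uniform diameter bound.

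The main obstacle is the intrinsic fibre diameter bound in the second step: the ambient Ricci bound on $U$ does not directly control the intrinsic Ricci of $X_{y_0}$ because of second fundamental form terms, so one cannot simply apply Myers or Bishop-Gromov on the fibre itself. Overcoming this requires using parabolic regularity to upgrade the $L^\infty$ Ricci bound on $U$ into $C^k$ bounds on slightly smaller open sets, and then combining the vanishing of $c_1(X_{y_0})$ with the exponentially shrinking polarisation class in a Cheeger-Fukaya-Gromov type collapsing analysis, to force any subsequential Cheeger-Gromov limit of the rescaled fibres to be strictly lower-dimensional. A secondary technical point is to ensure horizontal lifts near the singular locus of $X_{can}$ are well defined, which is handled by choosing base paths in $X_{can}^\circ$ except at endpoints and then invoking the density argument to reach the singular fibres.
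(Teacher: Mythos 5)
Your proposal diverges fundamentally from the paper's argument, and the divergence exposes a genuine gap. The paper's proof never attempts to bound path lengths near the singular fibres at all; instead it uses the Tian--Zhang relative volume comparison (Lemma \ref{rvolcom}), which you never invoke. In the paper the sole purpose of the hypothesis $\sup_{U\times[0,\infty)}|{\rm Ric}(g(t))|<\Lambda$ is to \emph{anchor} that relative volume comparison at the fixed fibre in $U$: combined with the global scalar curvature bound of \cite{SoT4}, it gives a uniform lower bound $Vol(B_{g(t)}(Q,L))\geq kL^{2\kappa}e^{-(n-\kappa)t}$ for any ball of radius $L$. The almost-convexity result of Section 2 (applied to the flow in Lemma \ref{convex32}) shows that a large compact set $D_{\epsilon_1}\supset U$ has $g(t)$-diameter $\leq L$, while $Vol(X\setminus D_{\epsilon_1},g(t))<\delta e^{-(n-\kappa)t}$ with $\delta\ll kL^{2\kappa}$; if diameter were unbounded, a ball of radius $L$ centred far from $D_{\epsilon_1}$ would lie entirely in the small-volume region but would have to carry volume $\geq kL^{2\kappa}e^{-(n-\kappa)t}$, a contradiction. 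This neatly sidesteps the singular fibres.

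The concrete gap in your proposal is the final density step. The $C^0$ convergence $\omega(t)\to\Phi^*\omega_{can}$ from \cite{TWY} holds only on \emph{compact} subsets of $X^\circ$, with a rate depending on the compact set; so your horizontal-lift length estimate $\mathcal{L}_{g(t)}(\gamma)\leq D_0+\varepsilon+o(1)$ is not uniform as $y=\Phi(q)$ approaches $X_{can}\setminus X_{can}^\circ$ --- the path $\gamma$ must exit every fixed compact subset of $X^\circ$ to reach $q$, and on the exiting portion you have no control on $g(t)$. Consequently you do not actually have a single constant $D$ bounding $d_{g(t)}(p_0,q)$ for all $q\in X^\circ$, so the ``density of $X^\circ$ plus continuity of $d_{g(t)}$'' step is vacuous: there is no uniform estimate to propagate. (There is also no obvious way to turn ``small volume near singular fibres'' into ``short distance through singular fibres,'' which is exactly why the paper argues by volume contradiction rather than by exhibiting short paths.) A secondary issue is the fibre-diameter step: you correctly identify that the ambient Ricci bound on $U$ does not control intrinsic fibre geometry, but the Cheeger--Fukaya--Gromov collapsing machinery you propose to bridge this is both overkill (uniform fibre shrinking on compacta of $X_{can}^\circ$ already follows from \cite{TWY} with no curvature hypothesis) and not where the Ricci hypothesis is actually needed.
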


The assumption on the Ricci curvature in Theorem \ref{main1} is used to apply the relative volume comparison for the Ricci flow proved in \cite{TiZ3}. It is expected that such a Ricci bound holds on any open domain $U$ compactly supported  in $X^\circ= \Phi^{-1}(X_{can}^\circ)$,  but it is still open at the moment. The recent work of \cite{HT} might give some technical hints on how to bound the Ricci curvature locally in $X^\circ$. In fact, when the general fibre of $\Phi: X\rightarrow X_{can}$ is a complex torus, then the full curvature tensors are uniformly bounded on any compact subset of $X_{can}^\circ$ \cite{TWY} and the following corollary immediately follows from Theorem \ref{main1}. Combined with the uniform bounds for the scalar curvature
$$\sup_{X\times [0, \infty)} |R(g(t))| < \infty$$
for all time (\cite{SoT4}), the estimate in Theorem \ref{main1} for long time collapsing solutions can be compared to Perelman's diameter and scalar curvature bounds for non-collapsed solutions of the K\"ahler-Ricci flow on Fano manifolds. The diameter estimate can also be viewed as a boundedness result in view of algebraic geometry and topology. Theorem \ref{main1} can be largely extended to the K\"ahler case since the semi-ampleness of the canonical bundle already implies that the canonical model is projective.

\begin{corollary}\label{mainc1} Let $g(t)$ be the solution of the normalized K\"ahler-Ricci flow (\ref{krflow}) on $X$ with any initial K\"ahler metric $g_0$. If ${\rm Kod}(X)\geq \dim X - 1$,  or more generally if the general fibre of $\Phi: X \rightarrow X_{can}$ is a complex torus, then there exists $D>0$ such that for all $t\in [0, \infty)$ we have
$$diam(X, g(t)) < D .$$

\end{corollary}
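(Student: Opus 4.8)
The plan is to derive Corollary~\ref{mainc1} from Theorem~\ref{main1}: the only thing to check is that, under either hypothesis of the corollary, there is an open domain $U\subset X$ containing a fibre of $\Phi$ over which the Ricci curvature of $g(t)$ is uniformly bounded for all $t\ge 0$, so that \eqref{curvature bound} holds.

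First I would record that both hypotheses reduce to the ``complex torus fibre'' condition. Since $K_X$ is semi-ample, $\Phi:X\to X_{can}$ is, over the nonempty Zariski-open set $X_{can}^\circ$, a holomorphic submersion whose fibres are smooth projective $(n-\kappa)$-folds with vanishing first Chern class, where $\kappa={\rm Kod}(X)$. If $\kappa=n$ the general fibre is a point; if $\kappa=n-1$ the general fibre is a smooth curve with $c_1=0$, i.e. an elliptic curve, which is a one-dimensional complex torus. So in every case in the statement the general fibre of $\Phi$ is a complex torus, and it is enough to treat that hypothesis.

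Next I would fix a point $p\in X_{can}^\circ$, choose a compact neighbourhood $K\subset X_{can}^\circ$ of $p$, and set $U=\Phi^{-1}(K^\circ)$, where $K^\circ$ is the interior of $K$; this is an open domain of $X$ containing the whole fibre $\Phi^{-1}(p)$. For $1\le\kappa\le n-1$ with general fibre a complex torus, the curvature estimate of \cite{TWY} bounds the full curvature tensor of $g(t)$ uniformly on $\Phi^{-1}(K)\times[0,\infty)$, hence $\sup_{U\times[0,\infty)}|Ric(g(t))|_{g(t)}<\Lambda$ for some $\Lambda>0$, which is precisely \eqref{curvature bound}. For $\kappa=n$ the diameter bound is already known from \cite{W}; alternatively, $g(t)$ converges in $C^\infty_{loc}$ on $X_{can}^\circ\cong X^\circ$ to the smooth K\"ahler-Einstein metric there, so its curvature, in particular its Ricci curvature, is uniformly bounded on $U\times[0,\infty)$. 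In either subcase Theorem~\ref{main1}, applied with this $U$ and $\Lambda$, produces $D>0$ with $diam(X,g(t))<D$ for all $t\ge 0$, which is the assertion of the corollary.

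The one point needing a little care is that the estimate of \cite{TWY} is phrased over compact subsets of the base $X_{can}^\circ$, whereas Theorem~\ref{main1} wants a bound on an open subset of the total space $X$ containing an entire fibre; this is handled simply by taking $U$ to be the $\Phi$-preimage of the interior of a compact neighbourhood of $p\in X_{can}^\circ$. Beyond this there is no real obstacle --- the content of the corollary lies entirely in Theorem~\ref{main1}.
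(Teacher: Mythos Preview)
Your proof is correct and follows essentially the same route as the paper: the paper simply notes that Corollary~\ref{mainc1} follows from Proposition~\ref{mainprop} (equivalently Theorem~\ref{main1}) together with the result of \cite{TWY} that the full curvature tensor of $g(t)$ is uniformly bounded on any compact subset of $X^\circ$ when the general fibre is a complex torus. Your added details---the reduction of the case $\kappa=n-1$ to the torus-fibre hypothesis via elliptic curves, the separate handling of $\kappa=n$ through \cite{W}, and the care taken in choosing $U$ as the preimage of the interior of a compact neighbourhood in $X_{can}^\circ$---are all appropriate elaborations of what the paper leaves implicit.
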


By Theorem \ref{main1}, one can always extract a convergent sequence  along time for the solution of the normalized K\"ahler-Ricci flow in Gromov-Hausdorff topology, however, more delicate analysis is required to show the uniqueness of such limits. We naturally propose the following conjecture about the convergence of the flow (\ref{krflow}) on smooth minimal models, i.e. projective manifolds with nef canonical bundle.

\begin{conjecture} \label{conj} Let $X$ be a smooth K\"ahler manifold with nef canonical bundle. Then for any initial K\"ahler metric $g_0$ on $X$, the solution $g(t)$ of the unnormalized K\"ahler-Ricci flow (\ref{krflow}) has uniformly bounded diameter and scalar curvature for all time. Furthermore, $(X, g(t))$ converges in the Gromov-Hausdorff topology to a unique compact metric space $(\mathcal{Z}, d_\mathcal{Z}) $ homeomorphic to the canonical $X_{can}$.

\end{conjecture}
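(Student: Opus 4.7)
The plan is to combine the diameter bound of Theorem \ref{main1} with a Gromov--Hausdorff compactness argument and an identification of any subsequential limit with the metric completion of $(X_{can}^\circ, g_{can})$. As a preliminary reduction I would replace the nef hypothesis on $K_X$ by semi-ampleness via the abundance conjecture (known up to complex dimension $3$), so that the Iitaka fibration $\Phi : X \to X_{can}$ and the twisted K\"ahler--Einstein current $g_{can}$ of (\ref{twke}) are at our disposal.

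First I would establish the uniform diameter and scalar curvature bounds. The scalar curvature bound is already known from \cite{SoT4}. For the diameter bound, by Theorem \ref{main1} it suffices to produce the local Ricci estimate (\ref{curvature bound}) on an open neighborhood of some smooth fibre $X_y = \Phi^{-1}(y)$ with $y \in X_{can}^\circ$. This is the crucial new analytic input. The natural route is to combine the $C^0$-convergence of the collapsing metric to $\Phi^{*} g_{can}$ on compact subsets of $X^\circ$ from \cite{TWY} with a parabolic Calabi-type $C^{3}$-argument applied to the fibrewise Ricci-flat representative modified by a semi-flat correction, along the lines of the recent work \cite{HT}. Near a smooth fibre the transverse class degenerates to a Ricci-flat class on a Calabi--Yau manifold, so one hopes to bootstrap a bounded metric to a bounded curvature tensor via maximum principle arguments for the evolution of $|Rm|$ coupled with Perelman-type heat kernel bounds.

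With (\ref{curvature bound}) in hand, Gromov's precompactness and Hamilton's compactness theorem produce subsequential Gromov--Hausdorff (and pointed smooth) limits. To identify any such limit $(\mathcal{Z}, d_\mathcal{Z})$ I would argue as follows. The $C^0$-convergence of the potentials on compact subsets of $X_{can}^\circ$ from \cite{TWY} makes $\Phi$ itself an $\epsilon$-GH-approximation between $(X, g(t))$ and $(X_{can}^\circ, g_{can})$, up to an error controlled by the fibrewise diameter, which decays like $e^{-t/2}$ in the complex torus case and should decay whenever the fibrewise curvature is bounded. A length-comparison argument, constructing short transverse paths through smooth fibres, then identifies $d_\mathcal{Z}$ with the intrinsic distance of the metric completion of $(X_{can}^\circ, g_{can})$. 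To conclude that this completion is homeomorphic to $X_{can}$, I would adapt the arguments of \cite{S2}, which handle the $\kod(X) = n$ case, to the twisted equation (\ref{twke}); the required inputs are the boundedness of the local potential of $g_{can}$ and a uniform Sobolev inequality across the discriminant and the singular loci of $X_{can}$.

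The main obstacle, by a wide margin, is the local Ricci bound near a generic fibre needed to invoke Theorem \ref{main1}; the authors flag this as open, and it appears to require either a Cheeger--Colding-type $\epsilon$-regularity theorem exploiting the Calabi--Yau structure of the fibres, or a genuinely new Bochner-type formula respecting the fibration. A secondary obstacle is upgrading subsequential GH convergence to a unique limit, which demands a monotone decay (not merely subsequential decay) of the fibrewise diameter; only the torus case seems tractable with present techniques. The final identification with $X_{can}$ will be delicate in the collapsing regime since $g_{can}$ generally has only bounded, not continuous, potentials near the discriminant, so that the metric completion could a priori glue together points that are distinct in $X_{can}$, and ruling this out requires pluripotential-theoretic control that is not currently available in full generality.
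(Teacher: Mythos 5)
This statement is labeled as a \emph{conjecture} (Conjecture \ref{conj}) in the paper; the authors do not prove it, and there is no proof in the paper for you to have missed or deviated from. What the paper establishes are partial results: Theorem \ref{main1} gives the diameter bound under the extra hypothesis of a local Ricci bound (\ref{curvature bound}) on a neighborhood of a fibre, Corollary \ref{mainc1} verifies that hypothesis when $\kod(X)\geq \dim X -1$ or the generic fibre is a torus, and Theorem \ref{KRF: minimal model} confirms the full Gromov--Hausdorff statement only when $\kod(X)=1$, $K_X$ is semi-ample, and the generic fibre is a torus (or more generally under (\ref{curvature bound}) with $X_{can}\setminus X_{can}^\circ$ finite). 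Your plan is broadly in the spirit of those proofs, and you are right, and honest, to flag the local Ricci estimate near a generic fibre as the decisive missing ingredient; the authors explicitly call it open, and nothing in the present state of the art closes it.

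Beyond that central obstruction, a few points in your sketch would also need repair even granting the Ricci bound. First, invoking Hamilton's compactness to get pointed smooth limits requires full curvature bounds, which a local Ricci bound does not supply; the paper deliberately avoids this and instead uses the relative volume comparison of \cite{TiZ3} together with the almost geodesic convexity of $D_\epsilon$ (Lemma \ref{convex32}), which only needs Ricci information. Second, identifying the Gromov--Hausdorff limit with $X_{can}$ itself is only established in the paper when $X_{can}$ has at worst orbifold singularities (Proposition \ref{contpro2}); adapting \cite{S2}, as you suggest, would be nontrivial for general canonical singularities, and the authors make no such claim. Third, the passage from nef to semi-ample $K_X$ is exactly the abundance conjecture and is itself open beyond dimension three, so the ``preliminary reduction'' is conditional. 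In short, your proposal is a reasonable and correctly calibrated roadmap for a currently open problem, but it is not (and does not purport to be) a proof; nor does the paper supply one.
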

 We confirm the conjecture in the following theorem  when $K_X$ is semi-ample, the Kodaira dimension is $1$and the general fibre of $X$ over $X_{can}$ is a complex torus. In particular, it confirms the conjecture for the K\"ahler-Ricci flow on minimal elliptic surfaces of Kodaira dimension $1$ in \cite{ST1}.

\begin{theorem}\label{KRF: minimal model}
Under the same assumptions including (\ref{curvature bound}) in Theorem \ref{main1}, if the projective manifold $X$ has Kodaira dimension $1$, then any solution of the normalized K\"ahler-Ricci flow (\ref{krflow}) converges in the Gromov-Hausdorff topology to the metric completion of  $(X_{can}^\circ, g_{can})$, which is homeomorphic to $X_{can}$. In particular, the conclusion holds for projective manifolds of Kodaira dimension $1$ and semi-ample canonical bundle whose general fibre over its canonical model is a complex torus. 
\end{theorem}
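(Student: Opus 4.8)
The plan is to combine the uniform diameter bound of Theorem \ref{main1} with the known fibrewise collapsing and $C^0$-convergence of the flow on $X^\circ = \Phi^{-1}(X_{can}^\circ)$ from \cite{ST1, TWY}, together with a Gromov--Hausdorff compactness and limit-identification argument. First I would fix notation: let $\Phi: X \to X_{can}$ be the canonical fibration, so $\dim X_{can} = 1$ and $X_{can}$ is a smooth projective curve (canonical singularities in dimension one are smooth), with $X_{can}^\circ = X_{can} \setminus \{p_1, \dots, p_N\}$ the locus of smooth fibres. By Theorem \ref{main1}, $\operatorname{diam}(X, g(t)) < D$ uniformly in $t$, so by Gromov's compactness theorem every sequence $t_j \to \infty$ has a subsequence along which $(X, g(t_j))$ converges in Gromov--Hausdorff topology to some compact metric space $(\mathcal{Z}, d_\mathcal{Z})$; the content of the theorem is that $\mathcal{Z}$ is independent of the sequence and is isometric to the metric completion $\overline{(X_{can}^\circ, g_{can})}$, which in turn is homeomorphic to $X_{can}$.

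Next I would identify the limit over the regular part. On any compact $K \Subset X_{can}^\circ$, the results of \cite{ST1, ST2} together with the $C^0$-improvement of \cite{TWY} give that the local potentials of $g(t)$ converge to those of $g_{can}$, and—since the general fibre is a complex torus—the full curvature tensors are uniformly bounded on $\Phi^{-1}(K)$ by \cite{TWY}. The fibres collapse: the diameter of $(\Phi^{-1}(z), g(t)|_{\text{fibre}})$ tends to zero, uniformly for $z \in K$. The standard consequence (cf. the collapsing theory, e.g. Fukaya-type arguments) is that $(\Phi^{-1}(K), g(t))$ converges in Gromov--Hausdorff topology to $(K, g_{can})$, and moreover the maps $\Phi$ are, for $t$ large, $\epsilon$-Gromov--Hausdorff approximations onto their images. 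Hence any GH-limit $\mathcal{Z}$ contains an open dense subset isometric to $(X_{can}^\circ, g_{can})$; the key point is that this identification is canonical and does not depend on the subsequence. The remaining issue is to control what happens near the finitely many singular fibres $\Phi^{-1}(p_i)$ and to see that they contribute only the "missing points'' of the metric completion, i.e. that $\mathcal{Z} = \overline{(X_{can}^\circ, g_{can})}$ with no extra pieces and no collapsing of positive-diameter sets to points. For this I would use (i) the uniform diameter bound to prevent $\mathcal{Z}$ from being larger than the completion, and (ii) a distance estimate showing that for $z$ near $p_i$ the GH-distance in $(X, g(t))$ from $\Phi^{-1}(z)$ to $\Phi^{-1}(p_i)$ is comparable to the $g_{can}$-distance $d_{g_{can}}(z, p_i)$, which stays bounded below away from $p_i$ and whose completion at $p_i$ is a single point (since $g_{can}$ has bounded local potential near $p_i$, the curve $X_{can}$ with metric $g_{can}$ has finite distance to $p_i$ and $p_i$ is a genuine point of the completion). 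Then I would invoke \cite{S2} (or rather its one-dimensional analogue, which is elementary: a Kähler metric on a curve with bounded potential has metric completion homeomorphic to the curve) to conclude $\overline{(X_{can}^\circ, g_{can})} \cong X_{can}$ as topological spaces.

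To pin down uniqueness of the limit, I would argue that since $(X_{can}^\circ, g_{can})$ is a fixed metric space and every subsequential GH-limit $\mathcal{Z}$ of $(X, g(t))$ contains it as a dense open subset with $\operatorname{diam} \mathcal{Z} \le D$ and with the "boundary'' at each $p_i$ collapsing to exactly one point, any two such limits are isometric; hence the full family $(X, g(t))$ converges as $t \to \infty$ to $\overline{(X_{can}^\circ, g_{can})}$. Finally, the last sentence of the theorem is immediate: for projective $X$ of Kodaira dimension $1$ with $K_X$ semi-ample whose general fibre over $X_{can}$ is a complex torus, the full curvature bounds of \cite{TWY} on compact subsets of $X_{can}^\circ$ give, in particular, a uniform Ricci bound on a neighborhood $U$ of a smooth fibre, so hypothesis (\ref{curvature bound}) of Theorem \ref{main1} holds and the general statement applies.

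I expect the main obstacle to be step two's final part: rigorously ruling out "extra'' limit points and "bubbling'' near the singular fibres $\Phi^{-1}(p_i)$—i.e. showing that the GH-limit near a singular value is precisely one point and that no definite amount of the geometry escapes or concentrates there. The uniform Ricci bound on $U$ (which by assumption contains a fibre over $X_{can}$, hence may be taken to contain a singular fibre if we are unlucky, but in any case contains arbitrarily many smooth fibres after shrinking) combined with the relative volume comparison from \cite{TiZ3}—exactly the tool already used to prove Theorem \ref{main1}—should be what forces the collapse near $p_i$ to a single point; making this quantitative, with an explicit comparison between distances in $(X, g(t))$ and distances in $(X_{can}, g_{can})$ uniformly up to the singular values, is the technical heart of the argument.
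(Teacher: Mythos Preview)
Your proposal is correct in outline and follows the same strategy as the paper. Two points deserve sharpening. First, invoking Gromov precompactness is not justified here, since there is no global lower Ricci bound along the flow; the paper instead proves \emph{direct} GH-convergence by showing that for every $\epsilon>0$ there exist $\delta>0$ and $T>0$ with $\operatorname{diam}_{g(t)}\bigl(\Phi^{-1}(B_{i,\delta})\bigr)<\epsilon$ for all $t\geq T$ and all $i=1,\dots,N$, which, combined with the $C^0$-convergence and fibre collapse on the complement, yields the full limit without passing to subsequences. Second, the precise mechanism near the singular fibres---which you correctly attribute to the relative volume comparison of \cite{TiZ3}---is the pairing of that comparison (giving $\mathrm{Vol}_{g(t)}(B_{g(t)}(q,r))\geq c\,r^{2n}e^{-(n-1)t}$ for balls centred anywhere in $X$, thanks to the uniform diameter bound of Theorem~\ref{main1}) with the volume \emph{upper} bound $\mathrm{Vol}_{g(t)}\bigl(\Phi^{-1}(B_{i,\delta})\bigr)\leq C e^{-(n-1)t}\int_{\Phi^{-1}(B_{i,\delta})}\Omega$, which follows from the $L^\infty$ estimates on $\varphi$ and $\partial_t\varphi$ and tends to zero with $\delta$. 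Together these force any metric ball contained in $\Phi^{-1}(B_{i,\delta})$ to have small radius, hence the diameter bound. This is more direct than the distance-comparison estimate you sketch in (ii), which is not what is actually used.
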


We remark that the result in Theorem \ref{KRF: minimal model} still holds in the case of higher Kodaira dimension if $X_{can}\setminus X_{can}^\circ$  is a set of finitely many isolated points.

The proof of Theorem \ref{main1} relies on the diameter estimate for certain family of twisted K\"ahler-Einstein metrics established in \cite{FGS} and the relative volume comparison established in \cite{TiZ3}. The main technical contribution of the paper is to prove that the evolving metrics $g(t)$ of the K\"ahler- Ricci flow has suitable convexity on $\Phi^{-1}(X_{can}^\circ)$. Such a convexity result is built on the almost  convexity of the twisted K\"ahler-Einstein metric $g_{can}$ on $X_{can}^\circ$ in the following theorem for the continuity method improving the results in  \cite{FGS}.

\begin{theorem} \label{main2}Let $X$ be a projective K\"ahler manifold of $\dim_{\mathbb{C}} X =n$ with semi-ample canonical line bundle $K_X$ with $X_{can}$ being the canonical model of $X$. Let $A$ be an ample line bundle and $g_t\in [tA + K_X]$ be the unique K\"ahler metrics for $t\in (0, \infty)$ satisfying
$$Ric(g_t) = -g_t + t g_A, $$
for any fixed K\"ahler metric $\omega_A\in [A]$ on $X$. Then the following hold.

\begin{enumerate}

\item $(X, g_t)$ converges in the Gromov-Hausdorff topology to a compact metric space $(\mathcal{Z}, d_\mathcal{Z})$ as $t\rightarrow 0^+$.

\item  $g_t$ converges in the $C^0$-topology on $\Phi^{-1}(X_{can}^\circ)$ to the pullback of $g_{can}$ on $X_{can}^\circ$ as $t\rightarrow 0^+$.

\item The metric completion of $(X_{can}^\circ, g_{can})$ is isomorphic to $(\mathcal{Z}, d_\mathcal{Z})$.

\end{enumerate}
In particular, if $\dim_{\mathbb{C}} X_{can} \leq 2$, or more generally if $X_{can}$ has only orbifold singularities, $(\mathcal{Z}, d_\mathcal{Z})$ is homeomorphic to $X_{can}$.

\end{theorem}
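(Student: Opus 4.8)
\emph{Proof proposal.} The plan is to combine three ingredients: a pluripotential-theoretic analysis of the degenerate complex Monge--Amp\`ere equation underlying $g_t$, local collapsing estimates on $\Phi^{-1}(\Xcr)$ in the spirit of \cite{ST1,ST2,TWY}, and Gromov's precompactness theorem together with the uniform diameter bound of \cite{FGS}. First I would recast the defining equation. Since $K_X$ is semi-ample, fix a smooth semipositive form $\chi=\Phi^*\omega_{can,0}$ in $c_1(K_X)$ pulled back from an ample (Fubini--Study type) form on $X_{can}$, and write $g_t\leftrightarrow \omega_t=\chi+t\omega_A+\ddbar\varphi_t$; then $Ric(g_t)=-g_t+tg_A$ is equivalent to the family of Monge--Amp\`ere equations $(\chi+t\omega_A+\ddbar\varphi_t)^n=e^{\varphi_t}\Omega$ for a fixed smooth positive volume form $\Omega$ on $X$ (with $\ddbar\log\Omega=\chi$). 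A $t$-uniform $C^0$ bound on $\varphi_t$ (Ko\l odziej / Eyssidieux--Guedj--Zeriahi estimates, or the arguments of \cite{ST1,ST2}) then allows passage to the limit $t\to 0^+$: the equation degenerates, on $\Xcr$, to the twisted K\"ahler--Einstein equation \eqref{twke}, with the Weil--Petersson term produced by the fibrewise integral of $\Omega$, and $\varphi_t\to\Phi^*\varphi_{can}$.

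Second, on $\Phi^{-1}(\Xcr)$ I would run the standard collapsing a priori estimates --- a second-order (Laplacian) estimate relative to $\Phi^*\omega_{can}$, then Evans--Krylov/Schauder and bootstrapping, exactly as in \cite{ST1,ST2} (and \cite{TWY}, where for complex-torus fibres one even obtains uniform bounds on the full curvature tensor). This yields $C^0_{loc}$, indeed $C^\infty_{loc}$, convergence $g_t\to\Phi^*g_{can}$ on $\Phi^{-1}(\Xcr)$, which is assertion (2), and in particular that the $g_t$-diameter of the fibre $\Phi^{-1}(x)$ tends to $0$ as $t\to0^+$, uniformly for $x$ in compact subsets of $\Xcr$.

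Third, for (1) and (3): the defining equation gives the $t$-uniform Ricci lower bound $Ric(g_t)=-g_t+tg_A\ge -g_t=-(n-1)\tfrac{1}{n-1}\,g_t$, so with the uniform diameter bound $\mathrm{diam}(X,g_t)<D$ of \cite{FGS}, Gromov's precompactness theorem shows every sequence $t_i\downarrow 0$ has a subsequence along which $(X,g_{t_i})$ converges in the Gromov--Hausdorff topology to a compact length space $(\mathcal Z,d_{\mathcal Z})$. To identify it, I would use a measurable section $s$ of $\Phi$ and assertion (2) to build, for exhausting compact subsets $K$ of $\Xcr$, maps $s|_K$ that are $\epsilon_t$-isometric embeddings of $(K,g_{can})$ into $(X,g_t)$ (fibres over $K$ collapsing); in the limit these produce an isometric embedding of $(\Xcr,g_{can})$, hence of its metric completion $\widehat{X^\circ_{can}}$, into $(\mathcal Z,d_{\mathcal Z})$. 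Surjectivity is the only nontrivial point: it amounts to showing that for every $\epsilon>0$ there is a neighborhood $V$ of $X_{can}\setminus\Xcr$ with $\sup_{y\in\Phi^{-1}(V)}d_{g_t}(y,\Phi^{-1}(\Xcr))<\epsilon$ for all small $t$, so that no point of $\mathcal Z$ escapes the image; this is exactly the place where one needs the \emph{almost convexity} of $(\Xcr,g_{can})$ (the distance measured inside $\Xcr$ equals the distance in $\widehat{X^\circ_{can}}$, i.e.\ near-minimizing paths spend negligible length near the singular locus), a refinement of \cite{FGS} that I regard as the key lemma. Once every subsequential limit is isometric to the \emph{fixed} space $\widehat{X^\circ_{can}}$, the whole family converges, giving (1) and (3) with $\mathcal Z=\widehat{X^\circ_{can}}$. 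Finally, when $X_{can}$ has only orbifold singularities --- automatic when $\dim X_{can}\le 2$, since canonical singularities are then quotient singularities --- the homeomorphism statement is local near each singular point: there $g_{can}$ is quasi-isometric to the quotient of a smooth metric (it has bounded local potential, cf.\ \cite{S2}), so completing $(\Xcr,g_{can})$ restores exactly the singular points with their analytic topology, whence $\widehat{X^\circ_{can}}\cong X_{can}$.

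The main obstacle I expect is precisely the geometry near the singular fibres $\Phi^{-1}(X_{can}\setminus\Xcr)$: all the local estimates of the second step degenerate there, the fibres themselves may be singular, and one must nonetheless control $g_t$ and $g_{can}$ from above well enough to rule out extra limit points and to establish the almost-convexity statement --- this is the technical heart of the argument and where \cite{FGS} is improved. A secondary nuisance is to make the $\epsilon_t$-approximation/isometric-embedding argument rigorous in the presence of collapsing fibres, which is why one works with the completion $\widehat{X^\circ_{can}}$ rather than with $X_{can}$ itself until the very last step.
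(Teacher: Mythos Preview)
Your overall strategy matches the paper's: reduce to a degenerate Monge--Amp\`ere family, get the $C^0$ potential bound, use the Ricci lower bound $Ric(g_t)\ge -g_t$ together with the diameter bound of \cite{FGS} for Gromov precompactness, invoke the $C^0$ convergence on $\Phi^{-1}(\Xcr)$ from \cite{TWY}, and identify the limit with the metric completion of $(\Xcr,g_{can})$ via an almost-convexity statement. You are right that the almost convexity is the technical heart, and the paper's mechanism for it is worth knowing: one builds a log-cutoff $\rho_\epsilon$ from $\log|\sigma|^2_h$ for an effective divisor $\sigma$ whose support contains $X_{can}\setminus\Xcr$, proves $\int_X|\nabla\rho_\epsilon|^2\,\omega(t)^n\le C(-\log\epsilon)^{-1}t^{n-\kappa}$, uses the coarea formula to find a level hypersurface $\partial D_\epsilon$ of small area, and then applies a Gromov-type lemma (if every minimizing geodesic from $p_1$ to $B(p_2,r)$ crosses $\partial D_\epsilon$ then $\mathrm{Vol}(\partial D_\epsilon)\gtrsim\mathrm{Vol}(B(p_2,r))$) to force some near-minimizer to stay in $D_\epsilon$. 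Your phrasing of what is needed is slightly off: density of $\Xcr$ in $\mathcal Z$ is already in \cite{FGS}, so ``surjectivity'' is not the issue; what the convexity lemma actually buys is the equality $d_{g_{can}|_{\Xcr}}=d_{\mathcal Z}|_{\Xcr}$ of the intrinsic and extrinsic distances, which is what makes the embedding extend to an \emph{isometry} of the completion onto $\mathcal Z$.

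Your orbifold step, however, is not correct as stated. Bounded local potential does not imply that $g_{can}$ is quasi-isometric to a smooth orbifold metric, and no such quasi-isometry is available. The paper instead proves a one-sided H\"older comparison $d_{g_{can}}(p,q)\le C\,d_{g_{orb}}(p,q)^\alpha$: one cannot apply estimates directly to $g_{can}$ (its volume density need not be smooth), so one approximates $F=-\log(\omega_{can}^\kappa/\omega_{orb}^\kappa)$ by a decreasing sequence of smooth quasi-psh $F_j$, solves $(\omega_{orb}+\ddbar\varphi_j)^\kappa=c_j e^{-F_j}\omega_{orb}^\kappa$, applies Ko\l odziej's $C^\alpha$ estimate to $\varphi_j$, and then a Morrey-type argument (after \cite{Li}) turns H\"older continuity of the potential into the H\"older distance bound for $g_j$; a Schwarz-lemma computation gives $Ric(g_j)$ bounded below, so the same convexity machinery applies to the $g_j$, and one passes to the limit. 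The reverse inequality $g_{can}\ge c\,g_{orb}$ comes from the Schwarz lemma. Together these show the completion is homeomorphic to $X_{can}$. (Minor point: your Monge--Amp\`ere equation should read $t^{-(n-\kappa)}(\chi+t\omega_A+\ddbar\varphi_t)^n=e^{\varphi_t}\Omega$; the normalization is needed for the right side to stay bounded.)
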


The advantage of using the continuity method over the Ricci flow is that the Ricci curvature is naturally bounded below uniformly and one can apply many existing techniques in comparison geometry.   A more natural adaption of Theorem \ref{main2} is for the collapsing behavior of Ricci-flat K\"ahler metrics on a Calabi-Yau manifold as a holomorphic fibration of Calabi-Yau manifolds. This topic has been extensively studied in \cite{Tos, GTZ, ToZh, HT}.

\begin{theorem} \label{main3}Let $X$ be a projective K\"ahler manifold of $\dim_{\mathbb{C}} X =n$ with $c_1(X)=0$. Suppose $L$ is a semi-ample line bundle over $X$. The linear system $H^0(X, mL)$ induces a holomorphic map $\Phi: X \rightarrow Y$ for sufficiently large $m$ from $X$ to  a projective variety $Y$. Let $A$ be an ample line bundle and $g_t\in [tA + L]$ be the unique Calabi-Yau metrics for $t\in (0, \infty)$. Then the followings hold.

\begin{enumerate}

\item $(X, g_t)$ converges in the Gromov-Hausdorff topology to a compact metric space $(\mathcal{Z}, d_\mathcal{Z})$ as $t\rightarrow 0^+$,

\item  $g_t$ converges in the $C^0$-topology on $\Phi^{-1}(Y^\circ)$ to the pullback of a unique smooth K\"ahler metric $g_Y$ on $Y^\circ$ as $t\rightarrow 0^+$.  Here $Y^\circ$ is the set of smooth points of $Y$ over which $\Phi$ is submersion. 

\item $g_Y$ extends to a K\"ahler current on $Y$ with bounded local potentials and on $Y^\circ$, we have
$$Ric(g_Y) = g_{WP},$$
where $g_{WP}$ is the Weil-Petersson metric of the variation of the smooth Calabi-Yau fibres of $X$ over $Y^\circ$.

\item The metric completion of $(Y^\circ, g_Y)$ is isomorphic to $(\mathcal{Z}, d_\mathcal{Z})$.

\end{enumerate}
In particular, if $\dim_{\mathbb{C}} Y \leq 2$ or more generally if $Y$ has only orbifold singularities, $(\mathcal{Z}, d_\mathcal{Z})$ is homeomorphic to $Y$.

\end{theorem}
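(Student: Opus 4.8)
\emph{Strategy of proof.}
The plan is to run the scheme of the proof of Theorem~\ref{main2}, replacing the twisted K\"ahler--Einstein equation by the Ricci-flat complex Monge--Amp\`ere equation; here the geometry is in fact cleaner, since $Ric(g_t)\equiv 0$ and the Bishop--Gromov volume comparison applies with no error term. Fix a K\"ahler form $\omega_A\in[A]$ and, using that $c_1(X)=0$, a smooth volume form $\Omega>0$ on $X$ with $Ric(\Omega)=0$. For $m$ large with $mL=\Phi^*\mathcal{O}_Y(1)$, let $\chi=\frac1m\Phi^*\omega_{FS}\ge 0$ be the pulled-back Fubini--Study form, a smooth semipositive representative of $c_1(L)$ with $\chi=\Phi^*\omega_Y$ for a K\"ahler form $\omega_Y$ on $Y^\circ$. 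Put $\hat\omega_t=t\omega_A+\chi\in[tA+L]$ and write $g_t=\hat\omega_t+\ddbar\varphi_t$, where $\varphi_t$ is the suitably normalized solution of $(\hat\omega_t+\ddbar\varphi_t)^n=c_t\,\Omega$ with $c_t=[tA+L]^n/\int_X\Omega$. Since $[L]^{\ell+1}=0$ for $\ell=\dim Y$, one has $c_t\asymp t^{\,n-\ell}$ as $t\to0^+$, so the metrics collapse along the $(n-\ell)$-dimensional fibres of $\Phi$.

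First I would establish the a priori estimates behind conclusions (2) and (3). A $t$-uniform bound $\|\varphi_t\|_{L^\infty(X)}\le C$ follows from a Ko\l odziej/Eyssidieux--Guedj--Zeriahi pluripotential estimate for the degenerating family, as in \cite{Tos}; over compact subsets of $\Phi^{-1}(Y^\circ)$ the higher-order estimates of \cite{Tos,GTZ} (using a fibrewise semi-flat comparison form and the submersion structure over $Y^\circ$) then yield local $C^\infty$-bounds and the convergence $g_t\to\Phi^*g_Y$ in $C^0_{loc}(\Phi^{-1}(Y^\circ))$, the limiting tensor being degenerate precisely along the fibres. Pushing the Monge--Amp\`ere equation forward along $\Phi$ and inserting the fibrewise Calabi--Yau normalization identifies $g_Y$ as the solution on $Y$ of a complex Monge--Amp\`ere equation whose density is, up to a smooth positive factor, the fibrewise volume $y\mapsto\int_{X_y}\Omega$; this density extends to an $L^p(Y)$ function for every $p$, so Ko\l odziej's theorem gives that $g_Y$ extends to a K\"ahler current on $Y$ with bounded local potentials, and differentiating on $Y^\circ$ yields $Ric(g_Y)=g_{WP}$.

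Next, for (1) and (4), I would first prove a uniform diameter bound $\mathrm{diam}(X,g_t)\le D$ by adapting the argument of \cite{FGS} to the family $g_t$, combining the uniform $L^\infty$-control of $\varphi_t$, the uniformly bounded geometry of $g_t$ over compact subsets of $Y^\circ$ (after unit-normalizing the fibre directions), and a covering estimate near $\Phi^{-1}(Y\setminus Y^\circ)$ controlled by the bounded-potential property of $g_Y$. Since $Ric(g_t)\equiv 0$, Gromov precompactness then gives that every sequence $t_j\to0^+$ has a subsequence with $(X,g_{t_j})\to(\mathcal{Z},d_\mathcal{Z})$ in the Gromov--Hausdorff topology. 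Using the $C^0_{loc}$-convergence on $\Phi^{-1}(Y^\circ)$, the composition of $\Phi$ with the approximating Gromov--Hausdorff maps converges to a surjection $\mathcal{Z}\to\overline{Y^\circ}$ that restricts to an isometry onto $(Y^\circ,g_Y)$ and collapses each singular fibre, so $(\mathcal{Z},d_\mathcal{Z})$ contains an open dense subset isometric to $(Y^\circ,g_Y)$.

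The step I expect to be the main obstacle is to show that no ``hidden'' boundary is created over $Y\setminus Y^\circ$: one must prove that the $g_t$-distance between two points over $Y^\circ$, minimized over paths that may pass near the critical locus, is asymptotically no shorter than their $g_Y$-distance inside $Y^\circ$. This is the analogue of the convexity input in Theorem~\ref{main2} and rests on the almost-convexity of $g_Y$ on $Y^\circ$ (improving \cite{FGS}), the bounded-potential control of $g_Y$ near $Y\setminus Y^\circ$, and a Bishop--Gromov non-collapsing bound in the base directions (available since $Ric\ge0$). Granting this, $(\mathcal{Z},d_\mathcal{Z})$ coincides with the metric completion of $(Y^\circ,g_Y)$ for every subsequence, which gives both the uniqueness of the limit and conclusions (1) and (4). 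Finally, when $Y$ has only orbifold singularities (in particular when $\dim_{\CC} Y\le 2$), the homeomorphism $(\mathcal{Z},d_\mathcal{Z})\cong Y$ follows by running the local argument of \cite{S2} in an orbifold uniformizing chart: the regularity of $g_Y$ established above makes the metric completion of $(Y^\circ,g_Y)$ locally homeomorphic to the chart, and these local homeomorphisms patch to a global one.
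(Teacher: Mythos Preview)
Your proposal is correct and follows essentially the same route as the paper: Section~3 explicitly reduces Theorem~\ref{main3} to rerunning the Section~2 argument for Theorem~\ref{main2} with the Ricci-flat Monge--Amp\`ere equation in place of the twisted K\"ahler--Einstein one, and you have correctly isolated the almost-convexity step as the main obstacle. The precise mechanism the paper uses for that step (which your sketch leaves implicit) is Gromov's lemma (Lemma~\ref{gromov}) together with the boundary-area estimate of Lemma~\ref{bdyarea} and the volume lower bound of Corollary~\ref{volr}, while the orbifold homeomorphism is obtained not from~\cite{S2} directly but via the H\"older distance comparison of Lemmas~\ref{holder1}--\ref{holder2} applied to a smooth approximating family as in Proposition~\ref{contpro2}.
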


The twisted Ricci-flat K\"ahler metric $g_Y$ in Theorem \ref{main3} was proposed in \cite{ST1, ST2} as a special case of the twisted K\"ahler-Einstein metrics and it had already been implicitly studied in \cite{GW, Fi} in the case of complex surfaces. The statement (4) in Theorem \ref{main3} confirms the conjecture proposed in \cite{ToZh} (Conjecture 1.1 (a) and (b)) related to an analogous conjecture by Gross \cite{Gr}, Kontsevich-Soilbelman \cite{KS} and Todorov \cite{Ma} for collapsing limits of Ricci-flat K\"ahler metrics near complex structure limits. The statement (3) in Theorem \ref{main3} is shown in \cite{ST1, ST2}. The  sequential convergence in statements (1) is proved in \cite{Tos, GTZ} and the statement (2) is proved in \cite{TWY}. The special case for the statement (4)  is proved in \cite{GTZ, ToZh} when $\dim_{\mathbb{C}} Y=1$ or $X$ is Hyperkahler. The main contribution of our work in this paper is the statement (4) for identifying the intrinsic and extrinsic geometric limits of collapsing Calabi-Yau metrics. 

We would also like to point out that the projective assumption for the K\"ahler manifold $X$ in this paper is for conveniences and is not essential in the proof. In fact, the semi-ampleness condition already implies $X_{can}$ is projective. 

We give a brief outline of the paper. In section 2, we prove Theorem \ref{main2} and in particular, we show that $(X_{can}^\circ, g_{can})$ is almost geodesically convex in $(\mathcal{Z}, d_\mathcal{Z})$. In section 3, Theorem \ref{main3} is proved by slight modification of the proof of Theorem \ref{main2}. In section 4, we prove our main result Theorem \ref{main1} and its corollaries by using the result and proof of section 2. Finally, we prove Theorem \ref{KRF: minimal model} in section 5. 

\section{Proof of Theorem \ref{main2}}

In this section, we will study deformation of a family of collapsing twisted K\"ahler-Einstein metrics and prove   Theorem \ref{main2}.

Let $X$ be a projective manifold of complex dimension $n$. Suppose the canonical line bundle $K_X$ is semi-ample and the Kodaira dimension of $X$ is $\kappa$, i.e., $\dim_{\mathbb{C}}X_{can} = \kappa$. In this section we will always assume $0<\kappa < n$. Then the pluricanonical system $|mK_X|$ induces a holomorphic morphism
$$\Phi: X \rightarrow X_{can}\hookrightarrow \mathbb{CP}^{N_m}, $$
for sufficiently large $m$ and $X_{can}$ is the unique canonical model of $X$.

Let $\omega_A$ be a K\"ahler metric in a K\"ahler class $A$ on $X$. We will now consider a continuous family of K\"ahler metrics $ \omega (t)$ defined by
\begin{equation}\label{contin1}
Ric(  \omega(t)) = -   \omega(t) + t \omega_A, ~ t\in (0, 1].
\end{equation}
We let $\Omega$ be a smooth volume form on $X$ such that
\begin{equation}
\chi = \ddbar\log \Omega =\frac{1}{m} \Phi^*\omega_{FS}\in [K_X],
\end{equation}
where $\omega_{FS}$ is the Fubini-Study metric of 
$\mathbb{CP}^{N_m}$. 
Therefore 
$$[  \omega(t)] = [\chi] + t [\omega_A].$$ Throughout the paper, we abuse the notation by identifying  $\chi$ with $\frac{1}{m}\omega_{FS}|_{X_{can}}$ on $X_{can}$ as well.
If we write $$  \omega(t) = \chi +t \omega_A +\ddbar \psi (t)$$
for some $\psi = \psi(t) \in C^\infty(X)$,
then by straightforward deductions, the equation (\ref{contin1}) becomes
\begin{equation}\label{contin2}
t^{-(n-\kappa)} (\chi + t \omega_A+ \ddbar \psi)^n = e^{\psi} \Omega.
\end{equation}

Equation (\ref{contin2}) has a smooth solution for all $t >  0$ by \cite{Y1, A} since $[\chi+t\omega_A ]$ is a K\"ahler class and we are interested in the the limiting behavior of $  \omega(t)$ as $t \rightarrow 0$.   We first state some basic estimates for $\psi$.

\begin{theorem} There exists $C>0$ such that for all $t\in (0,1]$, we have
$$|\psi| \leq C. $$

\end{theorem}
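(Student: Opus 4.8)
The plan is to establish the uniform bound $|\psi(t)| \leq C$ by a maximum-principle argument applied directly to the complex Monge-Amp\`ere equation \eqref{contin2}, treating the upper and lower bounds separately.

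\textbf{Upper bound.} First I would bound $\psi$ from above. At a maximum point $x_0$ of $\psi(t)$ on $X$, we have $\ddbar\psi(x_0) \leq 0$, so $(\chi + t\omega_A + \ddbar\psi)^n(x_0) \leq (\chi + t\omega_A)^n(x_0)$. Now $\chi = \frac{1}{m}\Phi^*\omega_{FS}$ is the pullback of a K\"ahler form from the $\kappa$-dimensional base $X_{can}$, hence $\chi^{\kappa+1} = 0$ and more precisely $(\chi + t\omega_A)^n = \sum_{j=0}^{\kappa} \binom{n}{j} t^{n-j} \chi^j \wedge \omega_A^{n-j} \leq C t^{n-\kappa}(\chi + \omega_A)^n$ pointwise for $t \in (0,1]$, since only the terms with $j \leq \kappa$ survive and each carries a factor $t^{n-j} \leq t^{n-\kappa}$. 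Therefore at $x_0$,
\begin{equation}
e^{\psi(x_0)}\Omega = t^{-(n-\kappa)}(\chi + t\omega_A + \ddbar\psi)^n(x_0) \leq C (\chi+\omega_A)^n / \Omega \cdot \Omega,
\end{equation}
and since $(\chi+\omega_A)^n / \Omega$ is a continuous function on compact $X$, it is bounded above, giving $\psi(x_0) \leq C$, hence $\sup_X \psi(t) \leq C$ uniformly in $t$.

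\textbf{Lower bound.} This is the main obstacle. At a minimum point $x_1$ of $\psi(t)$, one gets $e^{\psi(x_1)}\Omega \geq t^{-(n-\kappa)} (\chi + t\omega_A)^n(x_1)$, but the leading term $t^{-(n-\kappa)} \cdot t^{n-\kappa}\chi^\kappa \wedge \omega_A^{n-\kappa}$ can degenerate to zero on the locus where $\chi^\kappa$ vanishes (i.e. over the singular values and critical locus of $\Phi$), so the naive bound fails. The standard remedy, which I would follow, is to pass to the base: push the equation down via $\Phi$ and compare with the twisted K\"ahler-Einstein potential on $X_{can}$, or equivalently invoke the a priori estimates from \cite{ST1, ST2} where exactly this type of equation is analyzed. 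Concretely, one writes the equation on $X_{can}$ for the fibre-integrated measure and uses that $[\chi]$ is semi-positive and big on $X_{can}$; Ko\l odziej-type $L^\infty$ estimates for degenerate complex Monge-Amp\`ere equations (with right-hand side in $L^p$ for some $p>1$, which holds because the fibre volumes $t^{-(n-\kappa)}\int_{\text{fibre}}(\chi + t\omega_A + \ddbar\psi)^n$ stay in a fixed $L^p$ class uniformly in $t$) then yield the uniform lower bound. Alternatively, one constructs an explicit subsolution using a bounded potential of $\chi$ on $X_{can}$ pulled back, combined with a small multiple of a potential for $\omega_A$, and applies the comparison principle.

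I expect the lower bound to require the bulk of the work, and the cleanest route is to cite the $C^0$-estimates already established in \cite{ST1, ST2} for precisely the continuity-method equation \eqref{contin2} (or its Ricci-flow analog), since the setup here—semi-ample $K_X$, collapsing at rate $t^{n-\kappa}$—is identical. The upper bound is elementary and self-contained as sketched above. I would present the upper bound in full and then state that the lower bound follows from the uniform estimates of \cite{ST1, ST2}, indicating the Ko\l odziej-type argument as the mechanism.
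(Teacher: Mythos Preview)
Your upper bound argument is exactly the paper's. For the lower bound, the paper's route is more direct than what you propose: once $\sup_X \psi \leq C$ is established, the right-hand side $e^\psi\Omega$ of \eqref{contin2} is uniformly bounded in $L^\infty(X)$, and the lower bound on $\psi$ then follows immediately from the $L^\infty$-estimate of Demailly--Pali \cite{DP} (see also \cite{Kol1, EGZ}) for degenerate complex Monge--Amp\`ere equations, applied directly on $X$ with the degenerating reference form $\chi + t\omega_A$. The key structural point you underemphasize is that the upper bound \emph{feeds into} the lower bound by controlling the right-hand side; there is no need to push forward to $X_{can}$, analyze fibre integrals, or invoke \cite{ST1, ST2}. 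Your proposed route via the base would also work in principle, but it introduces an unnecessary layer---the family Kolodziej-type estimate already handles the degenerating class on $X$ itself.
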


\begin{proof} By the maximum principle, there exist $C_1, C_2>0$ such that for any $t\in (0,1]$, we have
$$\sup_X \psi \leq \sup_X \left(\log \frac{  t^{-(n-\kappa)}(\chi + t \omega_A )^n}{\Omega} \right)\leq C_1  \sup_X \left(\log \frac{ \chi^\kappa \wedge \omega_A^{n-\kappa}}{\Omega} \right)\leq C_2. $$
Then the right hand side of the equation (\ref{contin2}) is uniformly bounded above and the lower bound of $\psi$ follows directly from $L^\infty$-estimate for degenerate complex Monge-Amp\`ere equations by Demailly-Pali \cite{DP} (see also \cite{Kol1, EGZ}).

\end{proof}

We notice that for any $t\in (0,1]$, the Ricci curvature of $  \omega(t)$ is uniformly bounded below by $-1$. We can then apply the following diameter estimate proved in \cite{FGS}.

\begin{lemma} Let $  g(t)$ be the K\"ahler metric associated to $  \omega(t)$.
There exists $L>0$ such that for all $t\in (0,1]$,
$$Diam(X,   g(t)) \leq L.$$

\end{lemma}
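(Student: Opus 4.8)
The plan is to derive the uniform diameter bound from a general theorem of \cite{FGS} on diameters of twisted K\"ahler–Einstein metrics in families, by verifying its hypotheses for the family $\omega(t)$. The essential structural inputs are: (i) the K\"ahler classes $[\omega(t)] = [\chi] + t[\omega_A]$ lie in a fixed finite-dimensional cone and, after suitable normalization, converge to the pulled-back class $[\chi] = \frac{1}{m}\Phi^*[\omega_{FS}]$; (ii) the reference volume forms $t^{-(n-\kappa)}(\chi + t\omega_A)^n$ have a uniform $L^p(\Omega)$ bound for some $p > 1$, with the right scaling built in so that $\int_X t^{-(n-\kappa)}(\chi+t\omega_A)^n$ stays bounded above and below; and (iii) the Ricci lower bound $\mathrm{Ric}(\omega(t)) \ge -\omega(t)$, which holds identically since $t\omega_A \ge 0$. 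With these in place, the diameter bound of \cite{FGS} applies uniformly for $t \in (0,1]$.

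First I would record the class and volume asymptotics. Expanding $(\chi + t\omega_A)^n = \sum_{j=0}^n \binom{n}{j} t^j \chi^{n-j}\wedge\omega_A^j$ and using $\chi^{\kappa+1} = \frac{1}{m^{\kappa+1}}\Phi^*(\omega_{FS}^{\kappa+1}) = 0$ on $X$ (because $\dim_{\mathbb C} X_{can} = \kappa$), every term with $n - j > \kappa$, i.e.\ $j < n-\kappa$, vanishes. Hence $t^{-(n-\kappa)}(\chi+t\omega_A)^n = \binom{n}{n-\kappa}\chi^{\kappa}\wedge\omega_A^{n-\kappa} + O(t)$, so the total mass $\int_X t^{-(n-\kappa)}(\chi+t\omega_A)^n$ is uniformly bounded above and, since $\chi^\kappa\wedge\omega_A^{n-\kappa} = \frac{1}{m^\kappa}\Phi^*(\omega_{FS}^\kappa)\wedge\omega_A^{n-\kappa}$ has positive integral over $X$ (as $\Phi$ is surjective of relative dimension $n-\kappa$), also uniformly bounded below. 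The same computation gives the pointwise comparison $t^{-(n-\kappa)}(\chi+t\omega_A)^n \le C\,\Omega$ uniformly in $t \in (0,1]$, which is exactly the $L^\infty$, hence $L^p$, control of the density needed in \cite{FGS}; one may alternatively invoke the uniform bound $\psi \le C_2$ from the preceding theorem to see $e^\psi\Omega \le C\Omega$.

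Next I would cast the problem in the normalization used by \cite{FGS}: writing $\omega(t) = \chi + t\omega_A + \ddbar\psi(t)$ with $\|\psi(t)\|_{L^\infty} \le C$ uniform (the preceding theorem), the metric $\omega(t)$ is a twisted K\"ahler–Einstein metric in the class $[\chi] + t[\omega_A]$, whose limiting class $[\chi]$ is semi-ample and big on $X_{can}$, with uniformly bounded potentials relative to a fixed smooth reference form $\chi + \omega_A$, uniform volume bounds as above, and Ricci bounded below by $-1$. The diameter theorem of \cite{FGS} — which bounds the diameter of such metrics in terms of the class, a volume upper bound, an $L^p$ bound on the density with $p>1$, and the Ricci lower bound — then yields a constant $L$, independent of $t$, with $\mathrm{Diam}(X, g(t)) \le L$ for all $t \in (0,1]$.

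The main obstacle is bookkeeping rather than conceptual: one must check that the hypotheses of the cited diameter estimate are met \emph{uniformly} down to $t = 0$, in particular that the degeneration of the class $[\chi] + t[\omega_A]$ to the boundary face $[\chi]$ (which is only big and semi-ample, not ample) is accommodated — this is precisely the scenario \cite{FGS} is designed for, since the $t^{-(n-\kappa)}$ scaling keeps the reference volume non-degenerate. A secondary point is ensuring the $L^p$ density bound is genuinely uniform; here the clean pointwise bound $t^{-(n-\kappa)}(\chi+t\omega_A)^n \le C\,\Omega$ from the class expansion above removes any difficulty, since a uniform $L^\infty$ bound trivially gives a uniform $L^p$ bound against the fixed measure $\Omega$.
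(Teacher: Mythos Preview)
Your proposal is correct and takes essentially the same approach as the paper: both observe that $\mathrm{Ric}(\omega(t)) = -\omega(t) + t\omega_A \ge -\omega(t)$ and then invoke the diameter estimate of \cite{FGS}. The paper simply cites \cite{FGS} without spelling out the hypothesis-checking you do (volume asymptotics, uniform $L^p$ density bound, degenerating class structure), so your write-up is a more detailed version of the same argument.
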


By the volume comparison, we immediately have the following volume estimate.

\begin{corollary} \label{volr} There exists $C>0$ such that for any point $p\in X$ and $t\in (0,1]$ and $0< r < L$,
$$Vol(B_{  g(t)} (p, r),   g(t) ) \geq Cr^{2n}  t^{n-\kappa}.$$

\end{corollary}

The following lemma is due to \cite{G} (also see \cite{CC, Da}) as a consequence of the volume comparison and it is very useful to prove geometric convexity for certain family of metric spaces.

\begin{lemma} \label{gromov} Let $(M, g)$ be a Riemannian manifold of dimension $n$ satisfying
$$Ric(g) \geq - g, ~ Diam(M, g) \leq L.$$
Let  $E\subset M$ be any compact set with a smooth boundary. If there are two points $p_1, p_2\in M$ with $$B_g(p_i, r) \cap E = \emptyset, ~ i =1, 2$$
and every minimal geodesic from $p_1$ to points in $B(p_2, r)$ intersects $E$, then there exists $c=c(n, r, L)>0$ such that
$$Vol(\partial E, g) \geq c Vol(B_g(p_2, r), g). $$

\end{lemma}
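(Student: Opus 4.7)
The plan is to work in normal polar coordinates centered at $p_1$ and exploit the Bishop-Gromov Jacobian comparison, using that $\partial E$ separates $p_1$ from $B_g(p_2,r)$. In such coordinates, writing points of $M\setminus\mathrm{Cut}(p_1)$ as $q=\exp_{p_1}(sv)$ with $(s,v)\in(0,\infty)\times S_{p_1}$, the Riemannian volume element reads $J(s,v)\,ds\,d\omega(v)$, where $d\omega$ is the round measure on the unit sphere in $T_{p_1}M$ and $J$ is the Jacobian of $\exp_{p_1}$.

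First I would identify, for each $v\in S_{p_1}$ whose radial geodesic $\gamma_v$ enters $B_g(p_2,r)$, the first time $\tau(v)$ at which $\gamma_v$ meets $\partial E$; the hypothesis guarantees this exists, and $\tau(v)\ge r$ since $B_g(p_1,r)\cap E=\emptyset$. The set of $s$ with $\gamma_v(s)\in B_g(p_2,r)$ lies in $[\tau(v),L]$. Applying the Bishop-Gromov comparison to $Ric(g)\ge -g$, the ratio $J(s,v)/\Lambda(s)$ is nonincreasing in $s$, where $\Lambda$ is the model Jacobian of constant sectional curvature $-1/(n-1)$, and since $\Lambda$ is increasing this yields
\[
J(s,v)\le J(\tau(v),v)\cdot\frac{\Lambda(L)}{\Lambda(r)}\quad\text{for all } s\in[\tau(v),L].
\]
Integrating in polar coordinates, and writing $U\subset S_{p_1}$ for the set of directions whose radial geodesic reaches $B_g(p_2,r)$,
\[
Vol(B_g(p_2,r),g)\le L\cdot\frac{\Lambda(L)}{\Lambda(r)}\int_U J(\tau(v),v)\,d\omega(v).
\]
To close the argument I would represent the relevant portion of $\partial E$ locally as a radial graph $s=\tau(v)$ over an open subset of $S_{p_1}$; a direct computation shows that the induced area element is at least $J(\tau(v),v)\,d\omega(v)$, so
\[
\int_U J(\tau(v),v)\,d\omega(v)\le Vol(\partial E,g),
\]
and the lemma follows with $c=\Lambda(r)/(L\,\Lambda(L))$.

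The main technical care is in the radial-graph representation and in avoiding overcounting when passing from $S_{p_1}$ to $\partial E$: two distinct directions $v_1\ne v_2$ mapping to the same $y\in\partial E$ would yield two minimizing segments from $p_1$ to $y$, placing $y$ in the cut locus of $p_1$, a measure-zero subset of $\partial E$ by Sard's theorem and standard cut-locus structure; a further measure-zero set of directions $v$ for which $\gamma_v$ is tangent to $\partial E$ can be discarded by transversality. These exceptional sets do not affect the displayed volume inequalities, so the argument goes through cleanly.
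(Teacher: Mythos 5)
The paper does not supply a proof of this lemma; it is quoted from Gromov's book \cite{G}, with pointers to Cheeger--Colding \cite{CC} and Datar \cite{Da}. Your reconstruction follows the standard argument in those sources: polar coordinates at $p_1$, Bishop--Gromov Jacobian monotonicity to control $J(s,v)$ past the first hitting radius $\tau(v)\ge r$, integration of $J$ over the set of radii $I(v)=\{s\le c(v):\gamma_v(s)\in B_g(p_2,r)\}$ (whose length is at most $L$), and the radial-graph comparison $J(\tau(v),v)\,d\omega(v)\le \Phi^*\,dA_{\partial E}$ coming from $\Phi^*(\iota_{\partial_s}dVol_g)\le dA_{\partial E}$. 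The resulting constant $c=\Lambda(r)/(L\,\Lambda(L))$ has exactly the stated dependence on $n$, $r$, $L$, so the argument is sound and compatible with how the paper invokes the lemma.

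One justification is not quite right, though the conclusion survives. To rule out overcounting you assert that the cut locus of $p_1$ has measure zero \emph{as a subset of $\partial E$} by Sard's theorem. Sard controls the conjugate locus, but the other part of the cut locus (points admitting two or more minimizing geodesics) is an $(n-1)$-rectifiable set that can in principle meet a fixed hypersurface $\partial E$ in positive $(n-1)$-measure, so the claim as stated does not hold in general. The clean fix, which slots into your computation without any changes, is this: if $\tau(v)$ equals the cut distance $c(v)$, then $I(v)$ is contained in the single point $\{c(v)\}$ and such directions contribute nothing to $Vol(B_g(p_2,r))$; while if $\tau(v)<c(v)$, the segment $\gamma_v|_{[0,\tau(v)]}$ is the unique minimizing geodesic from $p_1$ to $\gamma_v(\tau(v))$, so that point is not a cut point and $v\mapsto\gamma_v(\tau(v))$ is genuinely injective on the set of directions that actually carry mass. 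Your remark about tangential intersections is fine modulo the usual Lipschitz area-formula bookkeeping, for the same reason.
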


We will construct the set $E$ for the family of metrics $  \omega(t)$.
First, by semi-ampleness of $K_X$ we can assume $K_X$ is the pullback of an ample line bundle $\mathcal{L}$ on $X_{can}$. We can pick an effective $\mathbb{Q}$-divisor $\sigma$ on $X_{can}$ such that

\begin{enumerate}

\item $\sigma$ lies in the class of $[\mathcal{L}]$,

\item  $X_{can}\setminus X_{can}^\circ$ is contained in the support of $\sigma$.

\end{enumerate}
We let $\sigma'=\Phi^*\sigma$.

Second, we consider a log resolution of $X_{can}$ defined by
$$\Psi: W \rightarrow X_{can}$$
such that

\begin{enumerate}

\item $W$ is smooth and the exceptional locus of $\Psi$ is a union of smooth divisors of simple normal crossings.

\item $\tilde \sigma$, the pullback of $\sigma$, is a union of smooth divisors of simple normal crossings.

\end{enumerate}  The Fubini-Study metric $\chi$ on $X_{can}$ also lies in $[\sigma]$.
Let $\tilde X$ be the blow-up of $X$ induced by $\Psi: W \rightarrow X_{can}$ and we let $  \Psi':   \tilde X \rightarrow X$. We also pick the hermitian metric $h$ on $\mathcal{L}$ such that $Ric(h) = \chi$. Let $\tilde \sigma' = \Psi^* \sigma'$. Away from $\tilde \sigma'$, $\tilde X$ can be identified as $X$ by assuming the blow-ups take place at the support of $ \sigma'$. We also let $ h' = \Phi^*h$ and $\tilde h' = \Psi^* h'$.

The following is an analogue of the Schwarz lemma.
\begin{lemma} \label{schwa} There exists $c>0$ such that for all $t\in (0,1]$ we have on $X$,
$$  \omega(t) \geq c \chi. $$

\end{lemma}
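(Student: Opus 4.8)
\textbf{Proof proposal for Lemma \ref{schwa}.}

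The plan is to use the standard Yau--Schwarz lemma computation on the map $\Phi: X \to X_{can}$, where the target carries the fixed Fubini--Study metric $\chi$ (recall $\chi = \frac1m \Phi^*\omega_{FS}$ has a bounded local potential and is, strictly speaking, only a pullback of a positive form; on $X_{can}^\circ$ it is a genuine K\"ahler metric with holomorphic bisectional curvature bounded above). Write $u = \tr_{\omega(t)}(\chi)$, the trace of the pulled-back Fubini--Study form with respect to $\omega(t)$. The goal is a uniform upper bound $u \le C$ independent of $t \in (0,1]$, which is exactly the assertion $\omega(t) \ge c\,\chi$ with $c = 1/C$ on the locus where $\chi>0$, and then the inequality extends across the degeneracy locus since $\chi$ is only semipositive there.

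First I would record the Bochner-type inequality coming from the Chern--Lu / Yau computation: on $X^\circ$,
\[
\Delta_{\omega(t)} \log u \;\ge\; -\, \frac{\ric(\omega(t))_{i\bar j}\, (\text{something}) }{u} \;-\; B\, u,
\]
more precisely $\Delta_{\omega(t)} \log u \ge \frac{1}{u}\, g^{i\bar j}(\omega(t))\, R_{i\bar j}(\chi\text{-pullback trace terms}) - B u$ where $B$ is an upper bound for the holomorphic bisectional curvature of $\chi$ on a neighborhood and the first term is controlled below using the Ricci equation $\ric(\omega(t)) = -\omega(t) + t\omega_A \ge -\omega(t)$. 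Combining these, on $X^\circ$ one gets a differential inequality of the schematic form
\[
\Delta_{\omega(t)}\log u \;\ge\; -\,n \;-\; B\,u .
\]
To turn this into an a priori bound I would apply the maximum principle to the auxiliary quantity $H = \log u - A\psi(t)$ for a suitably large constant $A$ (independent of $t$), using that $\psi(t)$ is uniformly bounded by the first Theorem above and that $\Delta_{\omega(t)}\psi = n - \tr_{\omega(t)}(\chi + t\omega_A) \le n - u$ (up to the $t\omega_A \ge 0$ term). At an interior maximum of $H$ one obtains $0 \ge \Delta_{\omega(t)} H \ge -n - Bu + A(u - n)$, which for $A > B$ forces $u$ bounded at that point, hence $H$ bounded, hence $u$ bounded everywhere by the uniform bound on $\psi$.

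The main obstacle is that $\chi$ degenerates: it is only the pullback of a metric from $X_{can}$, so $u$ is not bounded away from zero, and more seriously the maximum of $H$ could a priori escape to the divisor $X \setminus X^\circ$ where $\chi$ is singular in the sense that $\Phi$ is not submersive. The standard way around this is to do the maximum principle on the log resolution $\tilde X \to X$ introduced just above: there one replaces $u$ by $\tr_{\Psi'^*\omega(t)}(\tilde\chi)$ and perturbs by adding a small multiple $-\epsilon \log|s_{\tilde\sigma'}|^2_{\tilde h'}$ of the log of the defining section of the exceptional-plus-pullback divisor, so that $H_\epsilon = \log u - A\psi - \epsilon \log|s|^2$ tends to $-\infty$ near the bad locus and hence attains its maximum in the good region; one runs the argument there and lets $\epsilon \to 0$. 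I would need to check that the extra terms produced by differentiating $\log|s|^2_{\tilde h'}$ have a favorable sign (they do, since $-\ddbar\log|s|^2_{\tilde h'}$ is a fixed smooth form plus the current of integration, and the current contributes with the right sign in the maximum principle) — this is routine but is where the actual care is needed. Once $u$ is bounded on $\tilde X$ uniformly in $t$ and $\epsilon$, pushing down gives $\omega(t) \ge c\,\chi$ on $X$.
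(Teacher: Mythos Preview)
Your core argument---apply the maximum principle to $H=\log\tr_{\omega(t)}(\chi)-A\psi$ for $A$ large, using the Chern--Lu inequality and the uniform $L^\infty$ bound on $\psi$---is exactly what the paper does; its proof is a single sentence to this effect.

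Your added worry about the degeneracy locus of $\chi$, and the proposed detour through a log resolution with a barrier $-\epsilon\log|s|^2$, is unnecessary. Recall that $\chi=\frac{1}{m}\Phi^*\omega_{FS}$ is a \emph{smooth} closed semipositive $(1,1)$-form on all of $X$, pulled back by the holomorphic map $\Phi:X\to\mathbb{P}^{N_m}$. The Chern--Lu inequality requires only an upper bound on the holomorphic bisectional curvature of the \emph{target} metric---here $\omega_{FS}$ on $\mathbb{P}^{N_m}$, a fixed smooth metric with globally bounded curvature. Hence the inequality $\Delta_{\omega(t)}\log u\ge -C-Bu$ holds at every point of $X$ where $u>0$, whether or not $\Phi$ is a submersion there. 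At points where $u=0$ one has $H=-\infty$, so the maximum of $H$ is automatically attained in $\{u>0\}$, and the argument concludes directly on $X$ with no need to pass to $\tilde X$ or introduce a barrier.
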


\begin{proof} We can directly apply the maximum principle to the following quantity
$$\log tr_{ \omega}(\chi) - K \psi$$
for sufficiently large $K$ and the estimate of the lemma will immediately follows as $\psi$ is uniformly bounded.

\end{proof}

For simplicity, we assume that   
$$|\tilde \sigma'|^2_{\tilde h'} \leq 1$$
everywhere.
Let $F$ be the standard decreasing smooth cut-off function defined on $[0, \infty)$ satisfying

\begin{enumerate}
\item $F(x)=3$,  if $x\in [0, 1/2]$,

\item  $F(x)=0$,  if $x\in [3, \infty)$,
\item $F(x) = 3-x$, if $x\in [1,2]$.
\end{enumerate}
Let $$\eta_\epsilon = \max \left( \log |\tilde\sigma'|^2_{\tilde h'}, \log \epsilon \right)$$
for some sufficiently small $\epsilon>0$ to be determined later.
By the construction of $\tilde\sigma'$ and $h$, we have
$$ \ddbar \log |\tilde \sigma'|^2_{\tilde h'} +  \chi \geq 0,$$
therefore  $$\eta_\epsilon \in PSH(X, \chi) \cap C^0(X). $$
In particular, for sufficiently small $\epsilon>0$, we have $$ \log \epsilon \leq \eta_\epsilon \leq 0. $$
We define $\rho_\epsilon$ by
$$\rho_\epsilon = F\left( \frac{100\eta_\epsilon}{\log \epsilon} \right). $$

The following estimate is based on the calculations in \cite{S2} (see ).
\begin{lemma} \label{cutoff1}  There exists $C>0$ such that for any $t\in (0,1]$ and  any $0<\epsilon<1$, we have
$$\int_{  X} |\nabla \rho_\epsilon|^2 \wedge  \omega(t)^n \leq C(-\log \epsilon)^{-1}  t^{n-\kappa}. $$

\end{lemma}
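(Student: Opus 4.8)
The plan is to estimate the gradient term directly from the definition $\rho_\epsilon = F(100\eta_\epsilon/\log\epsilon)$ and the chain rule, reducing everything to an integral of $|\nabla \eta_\epsilon|^2$ over the annular region where $F$ is not constant. Since $F$ is Lipschitz with $|F'|\le 1$, we have pointwise $|\nabla\rho_\epsilon|^2 \le \left(\frac{100}{\log\epsilon}\right)^2 |\nabla\eta_\epsilon|^2$, and moreover $\nabla\rho_\epsilon$ is supported on the set $\Sigma_\epsilon = \{ \log\epsilon \le 100\eta_\epsilon \le \tfrac12\log\epsilon \}$ — equivalently $\{\epsilon^{1/200}\le |\tilde\sigma'|^2_{\tilde h'} \le \epsilon^{1/100}\}$ roughly — away from the locus where $\eta_\epsilon$ is frozen at $\log\epsilon$ (so $\nabla\eta_\epsilon = \nabla\log|\tilde\sigma'|^2_{\tilde h'}$ there). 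Thus
\begin{equation}
\int_X |\nabla\rho_\epsilon|^2\wedge\omega(t)^n \;\le\; \frac{C}{(\log\epsilon)^2}\int_{\Sigma_\epsilon} |\nabla\log|\tilde\sigma'|^2_{\tilde h'}|^2_{\omega(t)} \;\omega(t)^n.
\end{equation}

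The next step is to bound the gradient integral on the right. Write $s$ for the defining section so that $|\tilde\sigma'|^2_{\tilde h'} = |s|^2_{\tilde h'}$; then $\partial\log|s|^2_{\tilde h'}$ is, up to the smooth curvature term $\chi$ which is $\omega(t)$-bounded by Lemma \ref{schwa} and the diameter/volume bounds, essentially $\partial s/s$, which blows up like the inverse distance to $\tilde\sigma'$. The key analytic input, following the computation in \cite{S2}, is the integration-by-parts identity: one multiplies $\ddbar\eta_\epsilon \ge -\chi$ (as currents) by a suitable cutoff and integrates, using that $\omega(t)^n$ has a uniform weight and that $\int_X \chi\wedge\omega(t)^{n-1}$ is controlled. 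More precisely, on the region $\{|s|^2 \ge \epsilon\}$ one has $\ddbar\eta_\epsilon = \ddbar\log|s|^2_{\tilde h'} \ge -\chi$, so integrating the Bochner-type identity $\int (\text{cutoff})\,|\nabla\eta_\epsilon|^2\,\omega(t)^n$ against $\int \eta_\epsilon\, \ddbar(\text{cutoff})\wedge\omega(t)^{n-1}$ type terms and using $|\eta_\epsilon|\le -\log\epsilon$ produces a bound of the form $C(-\log\epsilon)\, t^{n-\kappa}$ for $\int_{\Sigma_\epsilon}|\nabla\eta_\epsilon|^2\,\omega(t)^n$ (the $t^{n-\kappa}$ coming from the normalization of the total volume $\int_X\omega(t)^n \le C t^{n-\kappa}$ implied by (\ref{contin2})). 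Combining with the factor $(\log\epsilon)^{-2}$ above gives exactly $C(-\log\epsilon)^{-1} t^{n-\kappa}$.

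Carrying this out cleanly requires handling the blow-up $\tilde X$ where $\tilde\sigma'$ becomes simple normal crossings, so that the model computation reduces to the standard one-variable estimate $\int_{\epsilon \le |z|^2\le \sqrt\epsilon} \frac{|dz|^2}{|z|^2} \sim -\log\epsilon$ in each transverse coordinate; the simple-normal-crossings hypothesis on $\tilde\sigma'$ is precisely what makes this legitimate. One also needs to ensure the estimates on $\tilde X$ descend to $X$, which is fine since $\Psi'$ is an isomorphism away from $\tilde\sigma'$ and the integrand is integrable across it.

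I expect the main obstacle to be the integration-by-parts step on the (possibly singular) space $X_{can}$ and its resolution: one must justify that the current inequality $\ddbar\log|s|^2_{\tilde h'} + \chi \ge 0$ can be paired with $\omega(t)^{n-1}$ and a Lipschitz cutoff without boundary contributions, and track that the constant is genuinely independent of $t$ — this is where the uniform bound $|\psi|\le C$ from the first theorem and the Schwarz-type lower bound $\omega(t)\ge c\chi$ of Lemma \ref{schwa} enter, the latter guaranteeing $|\nabla\eta_\epsilon|^2_{\omega(t)} \le c^{-1}|\nabla\eta_\epsilon|^2_\chi$ so that the divergent gradient can be measured in the fixed metric $\chi$ rather than the collapsing $\omega(t)$.
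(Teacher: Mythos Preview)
Your core idea is correct and matches the paper: chain rule gives the $(\log\epsilon)^{-2}$ prefactor, then integration by parts on $\int_X \sqrt{-1}\,\partial\eta_\epsilon\wedge\dbar\eta_\epsilon\wedge\omega(t)^{n-1}$ using that $\eta_\epsilon$ is $\chi$-psh with $|\eta_\epsilon|\le -\log\epsilon$, reducing everything to the cohomological quantity $\int_X \chi\wedge\omega(t)^{n-1}=[\chi]\cdot[\omega(t)]^{n-1}=O(t^{n-\kappa})$. The paper's proof is exactly this six-line computation:
\[
\int_X (F')^2\,\partial\eta_\epsilon\wedge\dbar\eta_\epsilon\wedge\omega^{n-1}
\le \int_X (-\eta_\epsilon)\,\ddbar\eta_\epsilon\wedge\omega^{n-1}
\le (-\log\epsilon)\int_X(\chi+\ddbar\eta_\epsilon)\wedge\omega^{n-1}
=(-\log\epsilon)\int_X\chi\wedge\omega^{n-1},
\]
the middle step dropping the nonpositive term $\int\eta_\epsilon\,\chi\wedge\omega^{n-1}$.

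However, your proposal layers several unnecessary complications on top of this, and one of them is actually wrong. The local model computation in simple-normal-crossings coordinates on $\tilde X$ is not needed at all; the argument is purely global via Stokes on the smooth manifold $X$, and the resolution plays no role in this lemma. Your worry about integration by parts ``on the (possibly singular) space $X_{can}$'' is misplaced for the same reason. More seriously, your final suggestion to use the Schwarz bound $\omega(t)\ge c\chi$ to pass to $|\nabla\eta_\epsilon|^2_{\chi}$ does not make sense: $\chi$ is a degenerate $(1,1)$-form pulled back from the $\kappa$-dimensional base, so the dual metric on $1$-forms is undefined (infinite in fiber directions). Neither the Schwarz lemma nor the $L^\infty$ bound on $\psi$ is used here; the only input beyond Stokes is the class computation $[\chi]\cdot([\chi]+t[\omega_A])^{n-1}=O(t^{n-\kappa})$, which follows from $[\chi]^{\kappa+1}=0$.
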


\begin{proof} There exist $C_1, C_2 >0$ such that
\begin{eqnarray*}
&& \sqrt{-1} \int_X \partial \rho_\epsilon \wedge \dbar \rho_\epsilon \wedge  \omega^{n-1}\\
&=& 10000(\log \epsilon)^{-2}\sqrt{-1} \int_X (F')^2 \partial \eta_\epsilon \wedge \dbar \eta_\epsilon \wedge  \omega^{n-1}\\
&\leq & C_1  (\log \epsilon)^{-2} \int_X  (- \eta_\epsilon) \ddbar \eta_\epsilon \wedge  \omega^{n-1}\\
&=& C_1  (\log \epsilon)^{-2} \left( \int_X  (- \eta_\epsilon)(\chi + \ddbar \eta_\epsilon) \wedge  \omega^{n-1} +   \int_X \eta_\epsilon \chi \wedge   \omega^{n-1} \right) \\
&\leq & C_1  (-\log \epsilon)^{-1}   \int_X (\chi + \ddbar \eta_\epsilon) \wedge  \omega^{n-1} \\
&= & C_1  (-\log \epsilon)^{-1}   \int_X \chi  \wedge  \omega^{n-1} \\
%
%&\leq& C_2 (-\log \epsilon)^{-1} [\chi]\cdot [  \omega(t)]^{n-1}\\
%
&\leq &C_2 (-\log \epsilon)^{-1} t^{n-\kappa }.
\end{eqnarray*}

\end{proof}

We will pick one of the level set of $|\tilde\sigma'|^2_h$ to be the hypersurface $E$ in Lemma \ref{gromov}.

\begin{lemma} \label{bdyarea}  There exists $C>0$ such that for any $0<\epsilon_0<1$ and any $t\in (0,1]$, there exists $\epsilon_0^2 \leq   \epsilon \leq \epsilon_0$, such that 
$$ Vol_{ \omega(t)}(\{|\tilde\sigma'|^{200}_{\tilde h'} =   \epsilon \}) \leq C(-\log \epsilon)^{-1/2}   t^{n-\kappa}  . $$

\end{lemma}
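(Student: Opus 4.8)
The plan is to extract the bound on a single level set from the integral estimate of Lemma \ref{cutoff1} by an averaging argument. The key observation is that $\rho_\epsilon = F(100\eta_\epsilon/\log\epsilon)$ has a nontrivial gradient precisely on the region where $100\eta_\epsilon/\log\epsilon$ lies in $[1,2]$, i.e. where $\eta_\epsilon$ (equivalently $\log|\tilde\sigma'|^2_{\tilde h'}$, since we are in the range where the $\max$ equals the first argument) ranges over an interval of logarithmic length comparable to $|\log\epsilon|$. On that transition region $F' = -1$, so $|\nabla\rho_\epsilon|^2 = 10000(\log\epsilon)^{-2}|\nabla\eta_\epsilon|^2$ pointwise, and Lemma \ref{cutoff1} says the $\omega(t)^n$-integral of this is at most $C(-\log\epsilon)^{-1}t^{n-\kappa}$.

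Concretely, I would fix $0<\epsilon_0<1$ and work with the function $u = -\eta_\epsilon = -\log|\tilde\sigma'|^2_{\tilde h'}$ (clipped at $-\log\epsilon$) on $\tilde X$, which takes values in $[0,-\log\epsilon]$; choosing $\epsilon$ in the stated dyadic window $\epsilon_0^2\le\epsilon\le\epsilon_0$ is equivalent to choosing the level $\{u = s\}$ for $s$ in a window of length $\sim|\log\epsilon_0|$. The coarea formula applied to $u$ with respect to the metric $\omega(t)$ gives
\begin{equation}
\int_{a}^{b} \left( \int_{\{u = s\}} \frac{1}{|\nabla u|_{\omega(t)}}\, d\mathcal{H}^{2n-1}_{\omega(t)} \right) ds = Vol_{\omega(t)}(\{a \le u \le b\}),
\end{equation}
while Lemma \ref{cutoff1} together with $|\nabla\rho_\epsilon|^2 = 10000(\log\epsilon)^{-2}|\nabla u|^2$ on the transition band controls $\int |\nabla u|^2\, \omega(t)^n$ there by $C(\log\epsilon)^2\cdot(-\log\epsilon)^{-1}t^{n-\kappa} = C(-\log\epsilon)t^{n-\kappa}$. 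Now the area of a level set is $\int_{\{u=s\}} 1 = \int_{\{u=s\}} |\nabla u|^{1/2}\cdot|\nabla u|^{-1/2}$, so by Cauchy-Schwarz on each slice,
\begin{equation}
Vol_{\omega(t)}(\{u = s\})^2 \le \left( \int_{\{u=s\}} |\nabla u|_{\omega(t)}\, d\mathcal{H}^{2n-1} \right) \left( \int_{\{u=s\}} \frac{1}{|\nabla u|_{\omega(t)}}\, d\mathcal{H}^{2n-1} \right).
\end{equation}
Integrating this in $s$ over the window, applying Cauchy-Schwarz in $s$, and using that $\int\!\!\int |\nabla u|\, d\mathcal{H}^{2n-1} ds = \int |\nabla u|\,\omega(t)^n \le (\int|\nabla u|^2\omega(t)^n)^{1/2}(\int_{\text{band}}\omega(t)^n)^{1/2}$ — with the second factor bounded by $Cr^{2n}$-type volume estimates, in fact simply by $Vol_{\omega(t)}(X) \le C t^{n-\kappa}$ — together with the coarea identity bounding $\int\!\!\int |\nabla u|^{-1}\, ds$ by the total volume $\le Ct^{n-\kappa}$, yields that the $s$-average of $Vol_{\omega(t)}(\{u=s\})$ over the window of length $\sim|\log\epsilon_0|$ is at most $C(-\log\epsilon_0)^{1/2}t^{n-\kappa}/|\log\epsilon_0| = C(-\log\epsilon_0)^{-1/2}t^{n-\kappa}$. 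Hence some level $s$ in the window — i.e. some $\epsilon\in[\epsilon_0^2,\epsilon_0]$ — has $Vol_{\omega(t)}(\{|\tilde\sigma'|^{200}_{\tilde h'}=\epsilon\})\le C(-\log\epsilon)^{-1/2}t^{n-\kappa}$, which is the claim after relating $|\tilde\sigma'|^{200}$ to $\eta_\epsilon$ and noting $|\log\epsilon|$ and $|\log\epsilon_0|$ are comparable.

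The main obstacle I anticipate is bookkeeping the interplay between the clipped function $\eta_\epsilon = \max(\log|\tilde\sigma'|^2_{\tilde h'},\log\epsilon)$ and the genuine smooth function $\log|\tilde\sigma'|^2_{\tilde h'}$: the level set $\{|\tilde\sigma'|^{200}_{\tilde h'}=\epsilon\}$ lives exactly on the locus where the $\max$ switches, and one must ensure the coarea argument is run on the region $\{|\tilde\sigma'|^2_{\tilde h'}\ge\epsilon\}$ where $\eta_\epsilon$ is smooth (away from the divisor its gradient is controlled), and that the chosen window of levels avoids the measure-zero critical-value issues and stays within the region where $\rho_\epsilon$ is in its linear regime. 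A secondary technical point is justifying the coarea formula and Sard-type genericity on $\tilde X$ for the real-analytic (or at least smooth away from $\tilde\sigma'$) function $|\tilde\sigma'|^2_{\tilde h'}$, and transferring the resulting level-set volume bound from $\tilde X$ back to $X$ — but since the blow-up $\Psi':\tilde X\to X$ is biholomorphic away from $\tilde\sigma'$ and the level sets in question are disjoint from the exceptional locus for $\epsilon$ small, this transfer is harmless. Once these are handled, the estimate is a clean averaging consequence of Lemma \ref{cutoff1}.
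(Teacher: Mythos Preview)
Your approach is correct and is the same idea as the paper's --- coarea plus Cauchy--Schwarz plus averaging over a logarithmic window of levels --- but you take an unnecessary detour, and there is a coarea misstatement to fix. The paper works directly with $\rho_{\epsilon_0}$ rather than $u$: by coarea with $H=|\nabla\rho_{\epsilon_0}|$, one has $\int_X |\nabla\rho_{\epsilon_0}|\,\omega(t)^n = \int_0^3 Vol_{\omega(t)}(\{\rho_{\epsilon_0}=a\})\,da$, and then a single global Cauchy--Schwarz $\int|\nabla\rho_{\epsilon_0}|\le(\int|\nabla\rho_{\epsilon_0}|^2)^{1/2}(Vol_{\omega(t)}(X))^{1/2}$ together with Lemma~\ref{cutoff1} and $Vol_{\omega(t)}(X)\le Ct^{n-\kappa}$ gives $\int_1^2 Vol(\{\rho_{\epsilon_0}=a\})\,da\le C(-\log\epsilon_0)^{-1/2}t^{n-\kappa}$; the mean value theorem then selects a good level $a\in[1,2]$, corresponding to $\epsilon=\epsilon_0^{3-a}\in[\epsilon_0,\epsilon_0^2]$. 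Your slice-by-slice Cauchy--Schwarz on $Vol(\{u=s\})^2$ is not needed, and your coarea identity ``$\int\!\!\int|\nabla u|\,d\mathcal{H}^{2n-1}\,ds=\int|\nabla u|\,\omega(t)^n$'' is off by one power: coarea gives $\int\!\!\int_{\{u=s\}} f\,d\mathcal{H}^{2n-1}\,ds=\int_X f|\nabla u|\,\omega(t)^n$, so the left-hand side of your identity is actually $\int|\nabla u|^2\,\omega(t)^n$. Once corrected, your chain still closes (indeed the correct version bypasses the extra Cauchy--Schwarz), and the remaining concerns you flag about Sard, the clipped maximum, and the identification $\tilde X\setminus\tilde\sigma'\cong X\setminus\sigma'$ are exactly as routine as you say.
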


\begin{proof} We apply the co-area formula
$$\int_X H dg = \int_{-\infty}^\infty \int_{\{G=u\}} \frac{ H}{|\nabla G|} dg|_{G=u} d u $$
by letting $H= |\nabla G|$ and $G= \rho_\epsilon$.   Applying  the previous lemma, there exists $C>0$ such that for all sufficiently small $\epsilon>0$ and $t\in (0, 1]$,
$$\int_X |\nabla \rho_\epsilon|  \omega^n \leq \left(\int_X |\nabla \rho_\epsilon|^2  \omega^n \right)^{1/2} \left(\int_X \omega^n \right)^{1/2} \leq C(-\log\epsilon)^{-1/2} t^{n-\kappa}. $$
We consider the region $$B_{\epsilon_0} = \{ \epsilon_0^2\leq |\sigma|^{200}_h \leq \epsilon_0  \}.$$ In $B_{\epsilon_0}$,
$$1\leq \rho_{\epsilon_0}=F(100\eta_{\epsilon_0}/\log \epsilon_0) \leq 2$$
and
$$\int_1^2
Vol(\{\rho_{\epsilon_0} = u\})  d u \leq C(-\log\epsilon_0)^{-1/2} t^{n-\kappa} .$$
By mean value theorem, there is $a\in[1,2]$ such that
$$Vol(\{\rho_{\epsilon_0} = a\})   \leq C(-\log\epsilon_0)^{-1/2} t^{n-\kappa} .$$
In other words,
$$Vol(\{\Phi^*|\sigma|_h^{200}=\epsilon_0^{3-a}\})   \leq C(-\log\epsilon_0)^{-1/2} t^{n-\kappa} .$$
\end{proof}

Let 
\begin{equation}\label{deset}
D_\epsilon = X \setminus \{ |\tilde\sigma'|^{200}_{\tilde h'} <\epsilon\}. 
\end{equation}
For sufficiently large $N>0$
\begin{equation}\label{gradchi}
\sup_X \left |\partial |\tilde \sigma'|_{\tilde h'} ^{2N}\right|_{\chi}  <\infty
\end{equation}
 because there exists $C=C(N)>0$ such that
$$ \chi\geq |\tilde \sigma' |^{2N}_{\tilde h'} \geq C g_A$$
for some fixed K\"ahelr metric $g_A$ on $X$. Without loss of generality, we can assume $N=100$ for simplicity.
The previous lemma indicates that for almost every sufficiently small $\epsilon>0$, $\partial D_\epsilon$ has very small volume. The following lemma also shows that $\{ |\tilde \sigma'|^{200}_h < \epsilon\}$ has very small volume.

\begin{lemma} \label{smvol} For any $\delta>0$, there exists $\epsilon>0$ such that for all $t\in (0,1]$,
$$\int_{|\tilde\sigma'|^{200}_{\tilde h'} \leq \epsilon}  \omega(t)^n \leq \delta t^{n-\kappa}.$$

\end{lemma}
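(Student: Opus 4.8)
The plan is to read off the volume form of $\omega(t)$ directly from the Monge--Amp\`ere equation (\ref{contin2}) and then combine the uniform bound on $\psi$ with the absolute continuity of the fixed smooth measure $\Omega$.

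\emph{First step.} Rewrite (\ref{contin2}) as
$$
\omega(t)^n \,=\, (\chi + t\omega_A + \ddbar\psi)^n \,=\, t^{n-\kappa}\, e^{\psi}\,\Omega .
$$
By the Theorem providing $|\psi|\le C$ for all $t\in(0,1]$, we get the pointwise comparison of measures $\omega(t)^n \le e^{C}\, t^{n-\kappa}\,\Omega$ on $X$, with the constant $e^{C}$ independent of $t$. Hence
$$
\int_{\{|\tilde\sigma'|^{200}_{\tilde h'}\le \epsilon\}} \omega(t)^n \;\le\; e^{C}\, t^{n-\kappa} \int_{\{|\tilde\sigma'|^{200}_{\tilde h'}\le \epsilon\}} \Omega ,
$$
so it suffices to show that the last integral, which no longer involves $t$, tends to $0$ as $\epsilon\to 0^+$.

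\emph{Second step.} Set $A_\epsilon := \{\,|\tilde\sigma'|^{200}_{\tilde h'}\le \epsilon\,\}$. These sets are nested and decreasing in $\epsilon$, the measure $\Omega$ is finite (since $X$ is compact and $\Omega$ is a smooth volume form), and $\bigcap_{\epsilon>0} A_\epsilon = \{\tilde\sigma'=0\}$ is contained in the support of the effective divisor $\tilde\sigma'$, a proper analytic subvariety of $X$ (of complex codimension at least one), hence a set of $\Omega$-measure zero. By continuity from above of the finite measure $\Omega$, $\lim_{\epsilon\to 0^+}\Omega(A_\epsilon)=\Omega(\{\tilde\sigma'=0\})=0$. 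Given $\delta>0$, choose $\epsilon>0$ so small that $e^{C}\,\Omega(A_\epsilon)\le \delta$; then $\int_{A_\epsilon}\omega(t)^n \le \delta\, t^{n-\kappa}$ for every $t\in(0,1]$, which is the claim.

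I do not expect a genuine obstacle here: the only point meriting a word of justification is that the common intersection $\{\tilde\sigma'=0\}$ is $\Omega$-null, which is immediate because it lies inside a divisor. The decisive (and essentially only) input is the uniform $L^\infty$ bound on $\psi$ established above, which turns the statement into an elementary absolute-continuity estimate for the fixed smooth volume form $\Omega$ and makes the bound automatically uniform in $t$.
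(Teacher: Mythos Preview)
Your proof is correct and considerably more direct than the paper's. You exploit the Monge--Amp\`ere equation (\ref{contin2}) itself to get the pointwise bound $\omega(t)^n = t^{n-\kappa} e^{\psi}\,\Omega \le e^{C} t^{n-\kappa}\,\Omega$, after which the result is just absolute continuity of the fixed smooth measure $\Omega$ with respect to the shrinking sets $A_\epsilon$. The paper instead bounds $\int_{A_\epsilon}\omega(t)^n$ by $\int_X \rho_\epsilon\,\omega(t)^n$, writes $\omega(t)^n - \theta(t)^n$ (with $\theta(t)=\chi+t\omega_A$) as a telescoping sum involving $\ddbar\psi$, and performs an integration by parts against $\rho_\epsilon$; this reuses the machinery already set up for Lemmas~\ref{cutoff1} and~\ref{bdyarea} and yields an explicit rate of order $(-\log\epsilon)^{-1}t^{n-\kappa}$. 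That quantitative rate is not needed for the lemma as stated, so your argument loses nothing of substance. Indeed, the paper itself uses exactly your pointwise observation later (see the proof of Lemma~\ref{smallvo}), so your approach is entirely in keeping with the methods of the paper, just applied one section earlier.
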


\begin{proof} First we notice that $\rho_\epsilon \geq 1$ when $|\sigma|^{200}_h \leq \epsilon$ and so
$$\int_{|\tilde\sigma'|^{200}_{\tilde h'}  \leq \epsilon}  \omega(t)^n \leq \int_X \rho_\epsilon  \omega(t)^n. $$
Also $$\lim_{\epsilon\rightarrow 0} \int_{|\tilde\sigma'|^{200}_{\tilde h'}  \leq \epsilon} \Omega = 0 $$
and if we let $\theta(t) = \chi+ t\omega_A$,
$$ \int_X \rho_\epsilon \theta^n \leq t^{n-\kappa} \int_X \rho_\epsilon\Omega$$
and
$$\int_X \rho_\epsilon ( \omega(t)^n - \theta^n) = \sum_{l=0}^{n-1} \int_X \rho_\epsilon \ddbar \psi \wedge  \omega(t)^l \wedge \theta^{n-1-l}.$$
Now by similar calculations as in the proof of Lemma \ref{cutoff1}, there exist $C_1, C_2, C_3, C_4>0$ such that

\begin{eqnarray*}
&& \int_X \rho_\epsilon \ddbar \psi\wedge    \omega \wedge \theta^{n-1-l}\\
&=& \int_X \psi \ddbar \rho_\epsilon \wedge  \omega \wedge \theta^{n-1-l} \\
&=&  \int_X \psi  \left( 10^2(\log \epsilon)^{-1}F' \ddbar \eta_\epsilon + 10^4( \log \epsilon)^{-2}F'' \partial \eta_\epsilon \wedge \dbar \eta_\epsilon \right)   \wedge  \omega \wedge \theta^{n-1-l} \\
&\leq& C_1(-\log \epsilon)^{-1} \int_X (\ddbar \eta_\epsilon + \chi)    \wedge  \omega \wedge \theta^{n-1-l} + C_1(-\log \epsilon)^{-1} \int_X    \chi \wedge  \omega \wedge \theta^{n-1-l} \\
&&+C_1  (\log\epsilon)^{-2}\int_X   \partial \eta_\epsilon \wedge \dbar \eta_\epsilon   \wedge  \omega \wedge \theta^{n-1-l} \\
&\leq& C_2(-\log\epsilon)^{-1} [\chi]\wedge[ \omega]^{n-1}-  C_1  (\log\epsilon)^{-2}\int_X  \eta_\epsilon \ddbar \eta_\epsilon   \wedge  \omega \wedge \theta^{n-1-l}\\
&\leq& C_3 (-\log\epsilon)^{-1} [\chi]\cdot [ \omega(t)]^{n-1}\\
&\leq& C_4(-\log\epsilon)^{-1}  t^{n-\kappa}.
\end{eqnarray*}

The lemma easily follows by combining the above estimates.

\end{proof}

Recall that there exists  $L>0$ such that for all $t\in (0,1]$,
$$ diam(X,  \omega(t)) \leq L. $$

\begin{lemma} \label{distance1} For any $\delta>0$, there exists $0<\epsilon <\delta$ such that for any  $t\in (0,1]$ and any two points
$p_1, p_2 \in D_\delta$, 
 there exists  a smooth path $\gamma_t \subset D_ \epsilon $ joining $p_1$ and $p_2$ satisfying
$$\mathcal{L}_{g(t)}(\gamma_t)   \leq d_{g(t)}(p_1, p_2) + \delta. $$
where $\mathcal{L}_{g(t)}(\gamma_t)  $ is the arc length of $\gamma_t$ with respect to the metric $g(t)$.
\end{lemma}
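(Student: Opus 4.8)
The goal is: given two points $p_1,p_2$ in the "thick" region $D_\delta$, we want to find a path between them that stays in the slightly thinner region $D_\epsilon$ (here $\epsilon<\delta$, so $D_\epsilon\subset D_\delta$ — wait, one must be careful: $D_\epsilon = X\setminus\{|\tilde\sigma'|^{200}<\epsilon\}$, so \emph{smaller} $\epsilon$ means \emph{larger} $D_\epsilon$), and whose length only exceeds the intrinsic distance $d_{g(t)}(p_1,p_2)$ by at most $\delta$. The point is that a near-minimizing geodesic from $p_1$ to $p_2$ might dip into the collapsing neighborhood $\{|\tilde\sigma'|^{200}<\epsilon\}$ of the bad locus, and we must push it out without paying much in length.

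\textbf{Step 1: Set up a comparison between the metric $g(t)$ and a fixed reference metric near the bad locus.} By Lemma \ref{schwa}, $\omega(t)\geq c\chi$ uniformly in $t$, and by \eqref{gradchi} the function $|\tilde\sigma'|^{200}_{\tilde h'}$ has $\chi$-bounded gradient. Hence the level sets $\{|\tilde\sigma'|^{200}_{\tilde h'}=s\}$, for $s$ in a range, foliate a fixed tubular neighborhood of the bad locus, and the $g(t)$-distance between two such level sets $\{=\epsilon\}$ and $\{=\delta\}$ is bounded \emph{below} by $c^{1/2}$ times their $\chi$-distance, which is bounded below by a positive constant depending only on $\epsilon,\delta$. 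Conversely — and this is what we actually use — a path component of a near-minimizer that enters $\{|\tilde\sigma'|^{200}<\epsilon\}$ from $D_\delta$ must first cross the shell $S_{\epsilon,\delta}=\{\epsilon\leq|\tilde\sigma'|^{200}_{\tilde h'}\leq\delta\}$, and crossing it costs a definite amount of $g(t)$-length.

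\textbf{Step 2: Surgery on the near-minimizing geodesic using the small-boundary-area lemma.} Fix a near-minimizer $\gamma$ from $p_1$ to $p_2$ with $\mathcal L_{g(t)}(\gamma)\leq d_{g(t)}(p_1,p_2)+\delta/2$. By Lemma \ref{bdyarea}, choose (for the given $\delta$) a level $\epsilon\in[\epsilon_0^2,\epsilon_0]$ with $\epsilon_0$ small, such that $E_\epsilon:=\{|\tilde\sigma'|^{200}_{\tilde h'}=\epsilon\}$ has $\mathrm{Vol}_{\omega(t)}(E_\epsilon)\leq C(-\log\epsilon)^{-1/2}t^{n-\kappa}$, uniformly in $t$. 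Now consider the (finitely many, since $\gamma$ is smooth and compact, after a small perturbation making $\gamma$ transverse to $E_\epsilon$) maximal sub-arcs of $\gamma$ lying inside $\{|\tilde\sigma'|^{200}<\epsilon\}$; each has endpoints on $E_\epsilon$. We want to replace each such excursion by a path lying on $E_\epsilon$ (hence in $D_\epsilon$) connecting its two endpoints, with controlled length. For this we need a diameter/connectivity estimate for $E_\epsilon$ in the induced metric, or more robustly a length estimate: either $E_\epsilon$ (or at least the component of it bounding the relevant collapsing piece) has $g(t)$-diameter $\to0$ as $\epsilon_0\to0$, or we argue that the total excised length is small. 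I would prefer the following cleaner route: the excursions into $\{|\tilde\sigma'|^{200}<\epsilon\}$ have \emph{total} $g(t)$-length at most $L$; but I will show that with $\epsilon$ small, $\gamma$ in fact does not need to enter deeply. Alternatively — and this is cleanest — combine Lemma \ref{smvol} (the collapsing neighborhood has tiny volume, $\leq\delta' t^{n-\kappa}$) with the isoperimetric-type content of Lemma \ref{bdyarea} and the lower volume bound Corollary \ref{volr}: a near-minimizer whose excursion into the tiny-volume region had substantial length would, via the tube of radius $r$ around it and the uniform lower volume bound $Vol(B(p,r))\geq Cr^{2n}t^{n-\kappa}$, force too much volume inside $\{|\tilde\sigma'|^{200}<\epsilon\}$, contradicting Lemma \ref{smvol}. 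This bounds the total length of the excursions by $o(1)$ as $\epsilon_0\to0$.

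\textbf{Step 3: Push the excursion onto the level set and close up.} Having bounded the total excursion length by, say, $\delta/4$, replace each excursion by the radial projection (along the gradient flow of $|\tilde\sigma'|^{200}_{\tilde h'}$, which is $g(t)$-comparable to the $\chi$-gradient flow by Lemma \ref{schwa} and \eqref{gradchi}) of its two endpoints onto $E_\epsilon$, joined by a short arc on $E_\epsilon$; the radial segments are short because they lie in the thin shell $S_{\epsilon',\epsilon}$, and the arc on $E_\epsilon$ can be taken short because — here I again invoke that the relevant component of $E_\epsilon$ has small $g(t)$-diameter, which follows from $Vol_{\omega(t)}(E_\epsilon)\leq C(-\log\epsilon)^{-1/2}t^{n-\kappa}$ together with the Bishop–Gromov-type lower volume bound applied \emph{within} $E_\epsilon$, or more simply by re-running the co-area/mean-value argument of Lemma \ref{bdyarea} one dimension down. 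The resulting path $\gamma_t$ lies in $D_\epsilon$ and satisfies $\mathcal L_{g(t)}(\gamma_t)\leq \mathcal L_{g(t)}(\gamma)+\delta/2\leq d_{g(t)}(p_1,p_2)+\delta$, as required. Finally, shrink $\epsilon$ further if necessary to ensure $\epsilon<\delta$ (allowed, since all the estimates only improve).

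\textbf{Main obstacle.} The delicate point is Step 2–3: controlling the \emph{length} (not just the volume) of the part of a near-minimizing geodesic lying in the collapsing region, and controlling the intrinsic diameter of the level set $E_\epsilon$ on which we reroute. Volume smallness (Lemma \ref{smvol}) and boundary-area smallness (Lemma \ref{bdyarea}) are in hand, but converting these into a \emph{length} bound requires the uniform noncollapsing lower bound $Vol(B_{g(t)}(p,r))\geq C r^{2n}t^{n-\kappa}$ of Corollary \ref{volr} applied along a tube around the geodesic; one must be careful that this lower bound is uniform down to scale $r\sim$ (excursion length), and that the tube actually lies inside the slightly enlarged collapsing region. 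This interplay between the lower volume bound, the co-area estimate, and the Schwarz lemma is precisely the "suitable convexity" the introduction flags as the main technical contribution, and it is where the real work lies.
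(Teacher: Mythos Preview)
Your proposal takes a fundamentally different route from the paper's, and it has a genuine gap at the point you yourself flag as the main obstacle.

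The paper does \emph{not} do surgery on a near-minimizing geodesic at all. Instead it applies Gromov's Lemma \ref{gromov} directly: taking $E = X\setminus D_\epsilon$ and $r=\epsilon_1$, the boundary area bound $Vol_{g(t)}(\partial D_\epsilon)\leq C(-\log\epsilon)^{-1/2}t^{n-\kappa}$ (Lemma \ref{bdyarea}) together with the ball volume lower bound $Vol_{g(t)}(B(p_i,\epsilon_1))\geq c(\epsilon_1)^{2n}t^{n-\kappa}$ (Corollary \ref{volr}) forces, for $\epsilon$ small enough depending only on $\epsilon_1$, the existence of some $q\in B_{g(t)}(p_2,\epsilon_1)$ such that the minimal geodesic $\hat\gamma_t$ from $p_1$ to $q$ never meets $E$, i.e.\ $\hat\gamma_t\subset D_\epsilon$. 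One then concatenates $\hat\gamma_t$ with a short geodesic from $q$ to $p_2$ (which lies in $D_\epsilon$ because, by the Schwarz lemma and \eqref{gradchi}, the $\epsilon_1$-ball around $p_2\in D_\delta$ is contained in $D_{\epsilon_1}\subset D_\epsilon$). This is a one-shot application of a comparison lemma; no rerouting or level-set geometry is needed.

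Your Step 3 is where the gap lies. After bounding the total excursion length (your tube argument in Step 2 can be made to work, using that balls of radius $r$ centered at $2r$-separated points on a \emph{minimizing} geodesic are disjoint), you still need to connect the two endpoints $a,b\in E_\epsilon$ of each excursion by a short path \emph{inside $D_\epsilon$}. You propose two mechanisms: (i) small area of $E_\epsilon$ plus a Bishop--Gromov bound ``within $E_\epsilon$'' gives small intrinsic diameter, or (ii) re-run the co-area argument one dimension down. Neither works. Small $(2n-1)$-area does not imply small intrinsic diameter (a long thin cylinder is a counterexample), and there is no ambient Ricci lower bound on the hypersurface $E_\epsilon$ to run Bishop--Gromov there. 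Likewise there is no analogue of the potential-theoretic co-area estimate intrinsic to $E_\epsilon$. And you cannot simply take the ambient geodesic from $a$ to $b$: even though $d_{g(t)}(a,b)$ is small, that geodesic may itself plunge into $\{|\tilde\sigma'|^{200}<\epsilon\}$, which is precisely the problem you are trying to solve. The radial projection along $\nabla|\tilde\sigma'|^{200}$ does not help either, since you have no two-sided comparison of $g(t)$ with $\chi$ in the collapsing region, only $g(t)\geq c\chi$, so projected lengths are not controlled from above. The clean way out is exactly Lemma \ref{gromov}, which produces a good geodesic outright rather than trying to repair a bad one.
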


\begin{proof} 
Let
$$K_{\epsilon_1}(t)=\{ x\in X ~|~ d_{g(t)}(x, \partial D_\delta)< \epsilon_1\} . $$
For any $x\in K_{\epsilon_1}(t)$, there exists $x'\in \partial \tilde D_\delta$ such that
\begin{eqnarray*}
|\tilde \sigma|_{\tilde h}^{200}(x) &\geq& |\tilde \sigma|_{\tilde h}^{200}(x') - \left(\sup_X \left|\nabla |\sigma|_{\tilde h'}^{200}\right|_{g(t)}\right) d_{g(t)}(x, x') \\
&\geq& |\tilde \sigma|_{\tilde h}^{200}(x') - C_1 \left(\sup_X \left|\nabla |\sigma|_{\tilde h'}^{200}\right|_{\chi}\right) d_{g(t)}(x, x')\\
&\geq& \delta- C_2 \epsilon_1
\end{eqnarray*}
by Lemma \ref{schwa} and (\ref{gradchi}), where $C_1, C_2$ do not depend on $\delta$, $\epsilon_1$ or $t$. By choosing $\epsilon_1<<\delta$, we have for all $t\in (0,1)$
$$K_{\epsilon_1} (t)\subset D_{\epsilon_1}.$$
We  choose $\epsilon< \epsilon_1$ with 
$$K_{\epsilon_1} (t) \subset D_{\epsilon_1} \subset D_\epsilon$$ and by Lemma \ref{bdyarea}, 
and there exists $C_3>0$ such that 
$$Vol_{g(t)}(\partial  D_\epsilon ) \leq C_3 (-\log \epsilon)^{-\frac{1}{2}} t^{n-\kappa} $$
for all $t\in (0,1)$.

By Corollary \ref{volr}, there exists $c>0$ such that for all $t\in (0,1)$ and $r<1$
$$Vol_{  g (t)} (B_{  g (t)}(p_i, r) \geq  c_1 r^{2n}  t^{n-\kappa} $$
and so 
$$Vol_{  g (t)} (B_{  g (t)}(p_i, \epsilon_1)  \geq c_1 (\epsilon_1)^{2n} t^{n-\kappa}, ~ i=1, 2.$$

Since $B_{  g (t)}(p_i, \epsilon_1) \subset\subset   D_\epsilon$, $i=1, 2$, we can apply Lemma \ref{gromov} by choosing sufficiently small $\epsilon>0$ with 
$$ (-\log \epsilon)^{\frac{1}{2}}  (\epsilon_1)^{2n} >>1$$
and letting $E = D_\epsilon$ and $r=\epsilon_1$. 
Hence  for all $t\in (0, 1)$, there exist $q\in B_{g(t)}(p_2, \epsilon)$ and a minimal geodesic $\hat\gamma_t \subset D_\epsilon$ (with respect to $g(t)$) joining $p_1$ and  $q$ and
$$ \mathcal{L}_{g(t)}(\hat\gamma_t) =d_{g(t)}(p_1, q) \leq d_{g(t)} (p_1, p_2) + \epsilon.$$
Now we can complete the proof of the lemma by letting $\gamma_t$ be the curve combining $\hat\gamma_t$ and a minimal geodesic joining $p_2$ and $q$ because
$$\mathcal{L}_{g(t)}(\gamma_t) \leq d_{g(t)}(p_1, p_2) + 2\epsilon  \leq d_{g(t)}(p_1, p_2) + 2\delta. $$

\end{proof}

We will also need the $C^0$ regularity of metrics. Let $\omega_{can} = \chi+\ddbar \psi_{can}$ be the twisted K\"ahler-Einstein metric on $X_{can}$
$$Ric(\omega_{can})= -\omega_{can} + \omega_{WP}. $$
We now define a semi-flat closed $(1,1)$-current on $X$ introduced in \cite{ST1, ST2} by the following
$$\omega_{SF} = \omega_A + \ddbar \phi_{SF}$$ such that for any $z \in X_{can}^{\circ}$,
$$Ric(\omega_{SF} |_{X_z}) =0, ~~ \int_{X_z} \phi_{SF} \omega_0^{n-\kappa} |_{X_z}=0. $$

The following lemma is due to \cite{FGS, TWY}.

\begin{lemma} \label{equivalence} For any $\epsilon>0$, there exists $h_\epsilon (t)\geq 0$ with $\lim_{t\rightarrow 0} h_\epsilon(t) =0$ such that for all $t\in (0,1] $, we have on $D_\epsilon$,
$$ (1-h_\epsilon(t)) (\Phi^*\omega_{can} + t \omega_{SF}) \leq        \omega(t) \leq (1+h_\epsilon(t)) (\Phi^*\omega_{can} + t \omega_{SF}) .$$
In particular, $\omega_t$ converges in $C^0$-topology on $\Phi^{-1}(X_{can}^\circ)$ to $\Phi^*\omega_{can}$ as $t\rightarrow 0^+$.

\end{lemma}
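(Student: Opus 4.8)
Set $\omega_{ref}(t)=\Phi^*\omega_{can}+t\,\omega_{SF}$. Since $\omega_{can}=\chi+\ddbar\psi_{can}$ on $X_{can}$ and $\omega_{SF}=\omega_A+\ddbar\phi_{SF}$, we have $[\omega_{ref}(t)]=[\chi]+t[\omega_A]=[\omega(t)]$, so $\omega(t)-\omega_{ref}(t)=\ddbar u(t)$ for a globally defined function $u(t)$, and the asserted two-sided bound is equivalent to $\omega(t)/\omega_{ref}(t)\to 1$ in $C^0$, uniformly on each fixed $D_\epsilon$. I would prove this in three steps: (i) identify the $C^0$ limit of the Monge--Amp\`ere potential $\psi(t)$; (ii) derive uniform higher-order a priori estimates for $\omega(t)$ on each fixed $D_\epsilon$; (iii) combine (i) and (ii) by a compactness argument to pin down the sharp ratio.

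For (i), on the compact set $D_{\epsilon/2}\subset\Phi^{-1}(X_{can}^\circ)$, over which $\Phi$ is a smooth submersion with Calabi--Yau fibres, $\omega_{SF}$ restricts to a genuine K\"ahler form on each fibre while $\Phi^*\omega_{can}$ is smooth and positive in the base directions, so $\omega_{ref}(t)^n=\binom{n}{\kappa}\,t^{n-\kappa}\,(\Phi^*\omega_{can})^{\kappa}\wedge\omega_{SF}^{\,n-\kappa}+O(t^{n-\kappa+1})$, with leading coefficient a smooth positive volume form comparable to $\Omega$ on $D_\epsilon$ (the adiabatic-limit volume form encoded by the defining equation of $\omega_{can}$). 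Feeding the uniform $L^\infty$ bound on $\psi$ into the stability theory for degenerate complex Monge--Amp\`ere equations (Ko\l odziej, Demailly--Pali, Eyssidieux--Guedj--Zeriahi) applied to \eqref{contin2} as $t\to 0^+$, one obtains $\psi(t)\to\Phi^*\psi_{can}$ in $L^1(X)$ and, by the local smoothness over $X_{can}^\circ$, in $C^0(D_\epsilon)$ for every $\epsilon$.

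For (ii), fix $\epsilon$ and work on $D_{\epsilon/2}$. A Schwarz-lemma computation --- applying the maximum principle to $\log\tr_{\omega(t)}(\Phi^*\omega_{can})-A\psi$ and to $\log\tr_{\omega(t)}(t\,\omega_{SF})-A\psi$ against a cutoff supported in $D_{\epsilon/2}$, as in the proof of Lemma \ref{schwa} --- together with the reverse bounds read off from \eqref{contin2} and Lemma \ref{schwa}, yields $\epsilon$-dependent constants $C_\epsilon$ with $C_\epsilon^{-1}\,\omega_{ref}(t)\le\omega(t)\le C_\epsilon\,\omega_{ref}(t)$ on $D_\epsilon$, uniformly in $t\in(0,1]$. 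After the anisotropic rescaling of the fibre directions by $t^{-1/2}$ (performed in local universal covers of the fibres), under which $\omega_{ref}(t)$, and hence $\omega(t)$, acquires uniformly bounded geometry on $D_\epsilon$, the equation \eqref{contin2} together with the local Evans--Krylov and Schauder estimates gives uniform $C^k_{loc}(D_\epsilon)$ bounds for the rescaled metrics. For (iii), given $t_j\to0$ these bounds let the rescaled $\omega(t_j)$ subconverge in $C^\infty_{loc}(D_\epsilon)$; the $C^0$ convergence of (i) forces the limit potential to be $\Phi^*\psi_{can}$, so the limiting equation identifies the limit with $\Phi^*\omega_{can}$ plus the fibrewise-flat semi-flat piece, i.e. with $\lim_t\omega_{ref}(t)$. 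As this limit is independent of the sequence, $h_\epsilon(t):=\sup_{D_\epsilon}\big|\omega(t)\,\omega_{ref}(t)^{-1}-1\big|\to0$, which is the lemma, and the concluding $C^0$ statement on $\Phi^{-1}(X_{can}^\circ)$ follows by exhausting it with the $D_\epsilon$.

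The main obstacle is step (ii) near $X_{can}\setminus X_{can}^\circ$: because $\chi=\tfrac1m\Phi^*\omega_{FS}$ and $\omega_{can}$ both degenerate along the singular fibres, the constants $C_\epsilon$ and the bounded-geometry control after rescaling genuinely deteriorate as $\epsilon\to0$, and keeping the Schwarz-lemma estimate, the anisotropic rescaling and the Weil--Petersson term under control all rely on the smoothness and local triviality of $\Phi$ over $X_{can}^\circ$ together with uniform bounds on $\omega_{SF}$ on $D_\epsilon$ --- precisely the structural inputs from \cite{ST1,ST2,FGS,TWY} that make the estimate $\epsilon$-local rather than global.
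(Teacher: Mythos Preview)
The paper does not supply a proof of this lemma at all: it is simply attributed to \cite{FGS,TWY}, with the statement quoted and no argument given. Your proposal is therefore not competing with a proof in the paper but rather sketching the content of those external references; the outline you give --- $C^0$ convergence of the potential via degenerate Monge--Amp\`ere stability, local two-sided $C_\epsilon^{-1}\omega_{ref}(t)\le\omega(t)\le C_\epsilon\omega_{ref}(t)$ bounds on $D_\epsilon$ by Schwarz-type arguments, anisotropic fibrewise rescaling to place the equation in bounded geometry, local Evans--Krylov/Schauder, and a compactness/uniqueness argument to upgrade uniform equivalence to $C^0$ convergence of the ratio --- is precisely the strategy of \cite{TWY} (with the $C^0$-potential input and diameter bound from \cite{FGS}). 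So your sketch is correct and matches the cited literature, which is all the paper invokes.

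One technical caution: in step (ii) you apply the maximum principle ``against a cutoff supported in $D_{\epsilon/2}$''. In the actual proofs the local Schwarz/$C^2$ estimates are obtained not by cutting off and applying a global maximum principle (which is delicate since $\omega_{SF}$ is undefined across the singular fibres), but by working in local product-type coordinates on $D_{\epsilon/2}$ after the $t^{-1/2}$ fibre rescaling and running the parabolic/elliptic $C^2$ and Evans--Krylov estimates with a barrier built from $\psi$ and $\log|\sigma|^2$ that blows up on $\partial D_{\epsilon/2}$. If you intend to turn the sketch into a full proof, this is the step where the cutoff heuristic needs to be replaced by the genuine local argument from \cite{TWY}.
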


%\begin{lemma} \label{domain1} For any $\epsilon>0$, let%
%$$E_\epsilon (t) = \{ z\in D_\epsilon ~|~ d_{  g(t)}(z, \partial D_\epsilon) \geq (-\log \epsilon)^{-1/8n} \}. $$
%
%Then there exists $\epsilon' = \epsilon'(\epsilon)$ with $\lim_{\epsilon\rightarrow 0} \epsilon' = 0$ such that
%
%$$  D_{\epsilon'} = \{ |\tilde\sigma'|^{200}_h \geq \epsilon'\} \subset  E_\epsilon(t) . $$

%\end{lemma}

%\begin{proof} This follows from the parabolic Schwarz lemma as on $D_\epsilon$, there exists $c>0$  such that for all $t\in (0,1]$,
%
%$$  \omega(t) \geq c \chi. $$

%\end{proof}

Now we are ready to prove the main result of this section.

\begin{proposition} \label{convex1} For any $\delta>0$, there exist $\epsilon_1 > \epsilon_2 >0$ such that for all $t\in (0,1]$,

\begin{enumerate}

\item $Vol( X\setminus D_{\epsilon_1},  \omega(t)) < \delta$.

\item for any two points $p, q \in D_{ \epsilon _1}$, there exists a continuous path $\gamma_t \subset D_{\epsilon _2}$ joining $p$ and $q$ such that
$$\mathcal{L}_{g(t)}(\gamma_t)   \leq d_{ \omega(t)}(p, q) + \delta \leq L+\delta.$$

\end{enumerate}

\end{proposition}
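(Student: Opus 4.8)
The plan is to deduce Proposition~\ref{convex1} by combining the three ingredients already assembled: the small-volume estimate of Lemma~\ref{smvol}, the boundary-area estimate of Lemma~\ref{bdyarea}, and the geometric-convexity mechanism of Lemma~\ref{gromov} packaged in Lemma~\ref{distance1}. The first assertion is essentially immediate. Given $\delta>0$, apply Lemma~\ref{smvol} with $\delta$ replaced by $\delta$ (working in the blow-up model and recalling that away from $\tilde\sigma'$ the space $\tilde X$ is identified with $X$, so volumes agree): there is $\epsilon_1>0$ with $\int_{|\tilde\sigma'|^{200}_{\tilde h'}\le \epsilon_1}\omega(t)^n \le \delta\, t^{n-\kappa}$ for all $t\in(0,1]$. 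Since $X\setminus D_{\epsilon_1} = \{|\tilde\sigma'|^{200}_{\tilde h'}<\epsilon_1\}$ and $Vol(X,\omega(t)) \sim t^{n-\kappa}$ (indeed the class satisfies $[\omega(t)]^n = t^{n-\kappa}[\chi]^\kappa\wedge[\omega_A]^{n-\kappa}+O(t^{n-\kappa+1})$), after renormalizing $\delta$ this gives $Vol(X\setminus D_{\epsilon_1},\omega(t))<\delta$. I would state it in the normalized form so that the bound is genuinely uniform in $t$.

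For the second assertion I would simply invoke Lemma~\ref{distance1} with $\delta/2$ in place of $\delta$, and with $D_{\epsilon_1}$ playing the role of $D_\delta$ there. Lemma~\ref{distance1} produces $\epsilon_2<\epsilon_1$ such that any two points $p,q\in D_{\epsilon_1}$ are joined by a smooth path $\gamma_t\subset D_{\epsilon_2}$ with $\mathcal{L}_{g(t)}(\gamma_t)\le d_{g(t)}(p,q)+\delta/2 \le d_{\omega(t)}(p,q)+\delta$, and the final bound $\le L+\delta$ follows from the uniform diameter bound $diam(X,\omega(t))\le L$. The only subtlety is a matching of parameters: Lemma~\ref{distance1} chooses its small radius $\epsilon$ (there) in terms of the given $\delta$ and then shrinks further, so I must make sure the $\epsilon_1$ delivered by part~(1) is taken \emph{after} — or simultaneously adjusted with — the threshold $\delta$ fed into Lemma~\ref{distance1}; concretely, first fix $\delta$, run Lemma~\ref{distance1} to get a candidate inner scale, then if necessary shrink $\epsilon_1$ so that part~(1) also holds with this $\epsilon_1$, and finally re-run Lemma~\ref{distance1} with the domain $D_{\epsilon_1}$ to obtain $\epsilon_2$. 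Since shrinking $\epsilon_1$ only enlarges the complement's smallness requirement mildly and Lemma~\ref{smvol} gives $\epsilon_1$ for arbitrarily small target, this bookkeeping closes without trouble.

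The genuinely substantive content has all been front-loaded into Lemmas~\ref{cutoff1}--\ref{distance1}; this proposition is the clean packaging, so there is no real obstacle beyond the parameter juggling just described. If I wanted to be careful about one point it would be the claim $Vol(X,\omega(t))\le C\, t^{n-\kappa}$ uniformly: this is forced by the cohomological identity $[\omega(t)] = [\chi]+t[\omega_A]$ together with $[\chi]^{\kappa+1}=0$ (because $\chi=\frac1m\Phi^*\omega_{FS}$ is pulled back from the $\kappa$-dimensional $X_{can}$), which makes the leading term of $[\omega(t)]^n$ exactly of order $t^{n-\kappa}$; I would record this explicitly before renormalizing $\delta$ in part~(1). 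With that in hand the proof is two short paragraphs, citing Lemma~\ref{smvol} for (1) and Lemma~\ref{distance1} for (2).
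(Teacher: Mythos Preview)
Your proposal is correct and follows essentially the same route as the paper: part~(1) from Lemma~\ref{smvol} and part~(2) from Lemma~\ref{distance1}, with the final inequality coming from the uniform diameter bound. One minor simplification: your discussion of volume renormalization is unnecessary, since Lemma~\ref{smvol} already gives $Vol(X\setminus D_{\epsilon_1},\omega(t))\le \delta\, t^{n-\kappa}\le \delta$ directly for $t\in(0,1]$ (as $n-\kappa>0$), so no appeal to the cohomological computation of $[\omega(t)]^n$ is required.
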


\begin{proof}   For any $\delta>0$, we can always choose $\epsilon_1$ sufficiently small so that the first estimate in the proposition holds by Lemma \ref{smvol}.  The second statement follows directly by Lemma \ref{distance1} after choosing both $\epsilon_1$ and $\epsilon_2$ sufficiently small.
\end{proof}

Proposition \ref{convex1} has many geometric consequences. We will use Proposition \ref{convex1} to prove  Theorem \ref{main2}. The following proposition proves the first three statements in Theorem \ref{main2}.

\begin{proposition} \label{contpro}The following hold.
\begin{enumerate}

\item $(X, g(t))$ converges in Gromov-Hausdorff topology to a compact metric space $(\mathcal{Z}, d_\mathcal{Z})$ as $t\rightarrow 0^+$,

%\item $\mathcal{Z}_{reg}$, the regular part of $\mathcal{Z}$,  coincides with $X_{can}^\circ$, a Zariski open dense set of $X_{can}$,

\item  $\omega_t$ converges in $C^0$-topology on $\Phi^{-1}(X_{can}^\circ)$ to the pullback of a smooth K\"ahler metric $g_{can}$ on $X_{can}^\circ$ as $t\rightarrow 0^+$,

\item the metric completion of $(X_{can}^\circ, g_{can})$ is isomorphic to $(\mathcal{Z}, d_\mathcal{Z})$.

\end{enumerate}

\end{proposition}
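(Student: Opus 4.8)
The plan is to extract the Gromov--Hausdorff limit first, then identify its relation to $(X_{can}^\circ, g_{can})$ using the convexity package in Proposition \ref{convex1} together with the $C^0$-convergence on $D_\epsilon$ from Lemma \ref{equivalence}. Since by \cite{FGS} we have a uniform diameter bound $\mathrm{diam}(X, g(t)) \leq L$ and by construction $Ric(g(t)) \geq -g(t)$, Gromov's precompactness theorem gives that any sequence $t_j \to 0^+$ has a subsequence along which $(X, g(t_j))$ converges in Gromov--Hausdorff topology to some compact metric space. The content of statement (1) is the \emph{uniqueness} of this limit, independent of the sequence; I would prove it simultaneously with statement (3), by exhibiting a single candidate limit built intrinsically from $(X_{can}^\circ, g_{can})$ and showing every subsequential limit is isometric to it.

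For the candidate, let $(\hat{\mathcal{Z}}, d_{\hat{\mathcal{Z}}})$ be the metric completion of $(X_{can}^\circ, g_{can})$; this is a length space, and I first check it is compact. Compactness follows because $X_{can}^\circ$ is dense in the candidate limit and, using Lemma \ref{equivalence} plus Proposition \ref{convex1}(1), the $g_{can}$-volume of $X_{can}^\circ$ is finite while Proposition \ref{convex1}(2) shows the $g(t)$-distance between points of $D_{\epsilon_1}$ is controlled by paths staying in $D_{\epsilon_2}$ — so the metric completion cannot ``grow new directions'' at infinity. Concretely, I would show: (i) for $p, q \in \Phi^{-1}(X_{can}^\circ)$ projecting to $X_{can}^\circ$, the distances $d_{g(t)}(p,q)$ converge as $t \to 0^+$ to $d_{g_{can}}(\Phi(p), \Phi(q))$ — the upper bound uses Lemma \ref{equivalence} along a fixed path in a compact $D_\epsilon$, and the lower bound uses Proposition \ref{convex1}(2) to push a nearly minimal $g(t)$-geodesic between such points into a region $D_{\epsilon_2}$ where Lemma \ref{equivalence} applies, so its length is $\geq (1 - h_{\epsilon_2}(t)) \, \mathcal{L}_{\Phi^*g_{can} + t\,\omega_{SF}}(\gamma_t) \geq (1-h_{\epsilon_2}(t))\, d_{g_{can}}(\Phi(p),\Phi(q)) - o(1)$, where the collapsing fiber directions contribute negligibly since $t\,\omega_{SF} \to 0$; (ii) points of $X \setminus D_{\epsilon_1}$ have $g(t)$-distance at most a uniform small quantity to $D_{\epsilon_1}$, by the same convexity argument applied with one endpoint varying, so they do not contribute new limit points. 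Combining (i) and (ii): the family of (pseudo)metric spaces $(X, d_{g(t)})$ is, up to error $\to 0$, the pullback to $X$ of $(X_{can}^\circ, d_{g_{can}})$ with collapsed fibers, hence converges in Gromov--Hausdorff sense to the metric completion $(\hat{\mathcal{Z}}, d_{\hat{\mathcal{Z}}})$; this is the unique limit, giving (1) and (3). Statement (2) is exactly the last assertion of Lemma \ref{equivalence}, with $g_{can} = \omega_{can}$ smooth on $X_{can}^\circ$ by the regularity quoted from \cite{ST1, ST2}.

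The main obstacle, and the step I would spend the most care on, is the lower bound in (i): showing that a nearly minimal $g(t)$-geodesic joining two points over $X_{can}^\circ$ can be confined to a fixed good region $D_{\epsilon_2}$ \emph{uniformly in $t$}, so that Lemma \ref{equivalence} is applicable along the whole geodesic. This is precisely what Lemma \ref{distance1} / Proposition \ref{convex1}(2) is engineered to provide — the delicate point is that the cutoff level $\epsilon$, and hence the region $D_\epsilon$, must be chosen \emph{before} $t$ and uniformly, which is why the volume estimates of Lemma \ref{cutoff1}, Lemma \ref{bdyarea}, and Lemma \ref{smvol} are all stated with the collapsing factor $t^{n-\kappa}$ factored out. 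A secondary technical point is verifying that the Gromov--Hausdorff convergence of the rescaled-but-collapsing spaces genuinely ``sees'' only the base metric; this follows because the fiber diameters in the metric $\Phi^*\omega_{can} + t\,\omega_{SF}$ are $O(\sqrt{t}) \to 0$ on each $D_\epsilon$, so the projection $\Phi$ becomes an $\epsilon(t)$-Gromov--Hausdorff approximation onto $(X_{can}^\circ \cap \Phi(D_\epsilon), g_{can})$, and one exhausts $X_{can}^\circ$ by letting $\epsilon \to 0$ after $t \to 0$.
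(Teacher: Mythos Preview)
Your proposal is correct and follows essentially the same strategy as the paper's proof: both rely on Proposition~\ref{convex1} to confine nearly-minimal $g(t)$-paths between points of $D_{\epsilon_1}$ to a fixed good region $D_{\epsilon_2}$, then invoke Lemma~\ref{equivalence} for the $C^0$ comparison with $\Phi^*\omega_{can}+t\,\omega_{SF}$, and use fiber collapse to identify the limiting distances with $d_{g_{can}}$ on $X_{can}^\circ$. The packaging differs slightly: the paper first fixes a subsequential Gromov--Hausdorff limit $(\mathcal{Z},d_\mathcal{Z})$, imports from \cite{FGS} the fact that $X_{can}^\circ$ sits inside $\mathcal{Z}$ as an open \emph{dense} subset, and then shows $d_\mathcal{Z}|_{X_{can}^\circ}=d_{g_{can}|_{X_{can}^\circ}}$; you instead build the candidate limit as the metric completion $\hat{\mathcal{Z}}$ and verify GH convergence directly. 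The content is the same.

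One small correction: your justification of step~(ii), that any $x\in X\setminus D_{\epsilon_1}$ is within uniformly small $g(t)$-distance of $D_{\epsilon_1}$, is not ``the same convexity argument with one endpoint varying'' --- Lemma~\ref{gromov} requires balls around \emph{both} endpoints to be disjoint from the barrier, which fails when $x$ lies in the bad region. The correct reason is the volume comparison of Corollary~\ref{volr} together with Lemma~\ref{smvol}: if $B_{g(t)}(x,r)\subset X\setminus D_{\epsilon_1}$ then $Cr^{2n}t^{n-\kappa}\le \mathrm{Vol}_{g(t)}(X\setminus D_{\epsilon_1})<\delta\,t^{n-\kappa}$, forcing $r\le(\delta/C)^{1/(2n)}\to 0$. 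This is precisely the density statement the paper cites from \cite{FGS}, so your direct route simply reproves that input rather than quoting it.
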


\begin{proof} It is proved in \cite{FGS} that for any sequence $t_j\rightarrow 0$, $(X, g(t_j))$ converges in Gromov-Hausdorff topology to a compact metric space $(\mathcal{Z}, d_{\mathcal{Z}})$  after possibly passing to a subsequence and so (1) follows.

(2) follows directly from Lemma \ref{equivalence}.

In order to prove (3), we again apply the result of  \cite{FGS}   that $(X_{can}^\circ, g_{can})$ can be locally isometrically embedded into $(\mathcal{Z}, d_{\mathcal{Z}})$ as a dense open set in $\mathcal{Z}$. For any two points $p, q \in X_{can}^\circ$, we choose arbitary $p'$ and $q'$  in the fibre $X_p=\Phi^{-1}(p) $ and $X_q=\Phi^{-1}(q)$. By Proposition \ref{convex1}, for any $\delta>0$, there exists $\epsilon >0$ such that for all $t_j $,
$$p', q' \in  D_{\epsilon}  $$
and there exists a continuous path $\gamma_{t_j} \subset D_{\epsilon}$ joining $p'$ and $q'$ such that
$$\mathcal{L}_{g(t_j)}(\gamma_{t_j})  \leq d_{ g(t_j)}(p', q') + \delta. $$
Since the fibre diameter uniformly tends to $0$ away from singular fibres and $g_{t_j}$ converges uniformly in $C^0$ to $g_{can}$ on $D_\epsilon$, $p$ and $g$ must converge in Gromov-Hausdorff distance to $p$ and $q$ on $(\mathcal{Z}, d_\mathcal{Z})$. Then there exists $J>0$ such that for all $j\geq T$, we have 
$$\mathcal{L}_{g(t_j)}(\gamma_{t_j})    \leq d_\mathcal{Z}(p, q) + 2\delta.$$
We can also assume for all $j\geq J$, we have 
$$g_{can} \leq (1+\delta)g_{t_j} $$
on $D_\epsilon$ by the $C^0$-estimate and convergence of $g(t_j)$ to $g_{can}$.
Therefore for $j\geq J$, we have
\begin{eqnarray*}
d_{g_{can}|_{X_{can}^\circ}}(p, q) &\leq&  |\Phi(\gamma_{t_j})|_{g_{can}}  \leq  (1+\delta) \mathcal{L}_{g(t_j)}(\gamma_{t_j})   \\
&\leq & (1+\delta) (d_\mathcal{Z}(p, q) + 2\delta)\\
&\leq& d_\mathcal{Z}(p, q)  +  \delta(diam(\mathcal{Z}, d_\mathcal{Z}) + 2+2\delta)
\end{eqnarray*}
where $d_{g_{can}|_{X_{can}^\circ}}$ is the distance function on $X_{can}^\circ$ induced by $g_{can}|_{X_{can}^\circ}$.  By letting $\delta \rightarrow 0$, we have on $X_{can}^\circ$,
\begin{equation}\label{d1side}
d_{g_{can}|_{X_{can}^\circ}} \leq d_\mathcal{Z}.
\end{equation}
On the other hand, for any $\delta>0$, by definition there exists a continuous path $\gamma_\delta$  in $X_{can}^\circ$ such that
$$ \left| \mathcal{L}_{g_{can}} (\gamma_\delta) -  d_{g_{can}|_{X_{can}^\circ}} (p, q) \right|< \delta, $$
while for sufficiently large $j>0$,
$$\mathcal{L}_{g_{can}} (\gamma_\delta)   \geq  \mathcal{L}_{g(t_j)} (\gamma'_\delta)- \delta \geq d_{ g(t_j)}(p', q') - \delta \geq d_\mathcal{Z}(p, q) - 2\delta,$$
where $\gamma'_\delta$ is a lift of $\gamma_\delta$ with $p'$ and $q'$ as the end points.
Therefore we have on $X_{can}^\circ$,
$$d_{g_{can}|_{X_{can}^\circ}} \geq d_\mathcal{Z}$$
and so combined with (\ref{d1side}), we have $$d_{g_{can}|_{X_{can}^\circ}}  = d_\mathcal{Z}|_{X_{can}}. $$
It immediately implies that the identity map on $X_{can}^\circ$ induces a Lipschitz map
$$\mathcal{F}: (\mathcal{Z}, d_\mathcal{Z}) \rightarrow (Z_{can}, d_{Z_{can}}), $$
where $(Z_{can}, d_{Z_{can}})$ is the metric completion of $(X_{can}^\circ, g_{can})$.

Since $X_{can}^\circ$ is a dense open set in $\mathcal{Z}$, for any point $z\in \mathcal{Z}$, there exist a sequence $z_i \in X_{can}^\circ$ converging to $z$ with respect to $d_\mathcal{Z}$. Since $d_{\omega_{can}|_{X_{can}^\circ}}  = d_\mathcal{Z} $ on $X_{can}^\circ$, $z_j$ must also converge in $(Z, d_Z)$ and so $\mathcal{F}$ must be injective. Same argument implies $\mathcal{F}$ is also surjective. This completes the proof of the proposition.

\end{proof}

We have completed the proof for  the statements (1), (2), (3) of Theorem \ref{main2} by Proposition \ref{contpro}.

Now we will prove the last part of Theorem \ref{main2}. First, we state the following H\"older estimate in \cite{Kol2} for complex Monge-Amp\`ere equations on K\"ahler orbifolds.

\begin{lemma}\label{holder1}Let $X$ be an $n$-dimensional K\"ahler orbifold. Let $\omega_{orb}$ be an orbifold K\"ahler metric on $X$ and $\Omega$ be an orbifold volume form on $X$.  We consider the following complex Monge-Amp\`ere equation
$$(\omega_{orb} + \ddbar\varphi)^n = F \Omega, $$
where $F$ is a non-negative function with $\int_X F \Omega = \int_X \omega_{orb}^n$.
If $||F||_{L^p(X, \Omega)} <\infty$ for some $p>1$, then there exist $\alpha=\alpha(X, p) \in (0,1)$ and $C=C(X, p, \omega_{orb},  ||F||_{L^p(X, \Omega)})>0$ such that
$$||\varphi -\sup_X \varphi||_{C^\alpha(X, \omega_{orb})} \leq C. $$

\end{lemma}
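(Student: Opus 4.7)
\emph{Proof sketch.} The plan is to reduce to Kolodziej's classical $L^\infty$ and Hölder estimates on smooth Kähler manifolds by lifting to local uniformizing covers. Since $X$ is a Kähler orbifold, it admits a finite atlas of charts $U_\alpha = \tilde U_\alpha / G_\alpha$ with $\tilde U_\alpha \subset \mathbb{C}^n$ and $G_\alpha$ a finite group acting holomorphically; the orbifold data $\omega_{orb}$ and $\Omega$ lift to smooth tensors $\tilde\omega_\alpha,\tilde\Omega_\alpha$ on $\tilde U_\alpha$, and an orbifold function corresponds to a $G_\alpha$-invariant smooth function on each cover. Because the $|G_\alpha|$ are uniformly bounded, the $L^p$ norm of the lifted density $\tilde F_\alpha$ is comparable to that of $F$, and the comparison principle for the Monge-Ampère operator on $X$ follows from the smooth comparison principle on each $\tilde U_\alpha$ by averaging over $G_\alpha$.

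First I would prove the $L^\infty$ bound $\|\varphi-\sup_X\varphi\|_{L^\infty}\le C$. Normalize $\sup_X\varphi = 0$ and introduce the Monge-Ampère capacity $\mathrm{cap}_{\omega_{orb}}$ for $\omega_{orb}$-psh functions. Control the volume of the sublevel set $\{\varphi<-t\}$ via Hölder's inequality,
\[
\int_{\{\varphi<-t\}} F\,\Omega \;\le\; \|F\|_{L^p}\cdot\mathrm{vol}(\{\varphi<-t\})^{1-1/p},
\]
and combine this with the capacity-volume inequality $\mathrm{vol}(E)\le C\,\mathrm{cap}(E)^s$ (valid on any fixed compact Kähler manifold, hence applicable chart by chart here). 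Kolodziej's iteration lemma then yields $|\inf_X\varphi|\le C(n,p,\omega_{orb},\|F\|_{L^p})$.

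To upgrade to Hölder continuity I would use the stability estimate $\|u-v\|_{L^\infty}\le C\,\|f-g\|_{L^1}^{\gamma}$ for normalized $\omega_{orb}$-psh solutions with densities $f,g\in L^p$ and some $\gamma=\gamma(n,p)>0$, proved by the same Bedford-Taylor capacity-theoretic argument as in the smooth setting. Then regularize $\varphi$ at scale $\epsilon$ using Kiselman-type geodesic convolutions on each $\tilde U_\alpha$, assembled into an orbifold-compatible family $\varphi_\epsilon$ via a $G_\alpha$-equivariant partition of unity. One checks $\|(\omega_{orb}+\ddbar\varphi_\epsilon)^n/\Omega - F\|_{L^1}\le C\epsilon^{\beta}$, so stability gives $\|\varphi_\epsilon-\varphi\|_{L^\infty}\le C\epsilon^{\beta\gamma}$; combining with the gradient bound $|\nabla\varphi_\epsilon|\le C/\epsilon$ produces $|\varphi(x)-\varphi(y)|\le C\,d(x,y)^{\alpha}$ with $\alpha=\beta\gamma/(1+\beta\gamma)$. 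The main obstacle is the orbifold-equivariant regularization: Demailly's convolution construction uses a smooth Kähler connection, and one must arrange the smoothing to be $G_\alpha$-equivariant on each uniformizing cover so that $\varphi_\epsilon$ descends to a bona fide family of orbifold functions on $X$. Once that framework is in place, the remaining argument is precisely Kolodziej's, applied chart by chart.
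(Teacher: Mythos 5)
The paper does not give a proof of this lemma at all: it is introduced with the sentence ``First, we state the following H\"older estimate in \cite{Kol2} for complex Monge-Amp\`ere equations on K\"ahler orbifolds,'' i.e.\ it is treated as a known consequence of Kolodziej's theorem, with the orbifold generalization taken for granted. Your sketch therefore supplies an argument the paper omits, and it is a reasonable outline of how the adaptation goes (capacity $\Rightarrow$ $L^\infty$, stability $+$ regularization $\Rightarrow$ H\"older), following the Demailly--Dinew--Guedj--Kolodziej--Pham--Zeriahi route rather than Kolodziej's original iteration, which is fine.

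One remark on the ``main obstacle'' you flag. The $G_\alpha$-equivariance of the Kiselman/Demailly geodesic convolution is not really an obstacle: since $G_\alpha$ is a finite group of isometries of the lifted K\"ahler metric $\tilde\omega_\alpha$, the exponential map and the associated geodesic convolution kernel are automatically $G_\alpha$-equivariant, and hence the regularization of a $G_\alpha$-invariant psh function is again $G_\alpha$-invariant. (If one is nervous, one can also simply average the regularized function over $G_\alpha$; averaging a family of $\tilde\omega_\alpha$-psh functions over a finite isometry group preserves the quasi-psh property and the quantitative estimates you need.) What does require a little care, and your sketch glosses over, is gluing the chart-by-chart regularizations into a single global family $\varphi_\epsilon$ of $\omega_{orb}$-psh functions with the stated $L^1$ estimate on the Monge--Amp\`ere measures; this is the usual patching issue in Demailly's construction on a compact manifold and has a standard resolution (using a regularized maximum), so it is not a gap in principle, but it deserves a sentence. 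Also, the capacity--volume inequality $\mathrm{vol}(E)\le C\,\mathrm{cap}(E)^s$ is a global statement on the compact orbifold, not literally a chart-by-chart one; it holds by the same Alexander--Taylor argument once $\omega_{orb}$-psh functions are uniformly integrable, but phrasing it as ``applicable chart by chart'' is slightly misleading. With these clarifications your approach is sound and consistent with what the paper implicitly relies on.
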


The following lemma is a slight orbifold generalization of a distance estimate obtained in \cite{Li}.

\begin{lemma} \label{holder2} Let $X$ be an $n$-dimensional K\"ahler orbifold. Let $\omega_{orb}$ be an orbifold K\"ahler metric on $X$. Suppose $\omega \in [\omega_{orb}]$ is an orbifold K\"ahler metric on $X$ satisfying
$$\left\|\frac{\omega^n }{\omega_{orb}^n} \right\|_{L^p(X, \omega_{orb})} <\infty. $$
Then there exist $\alpha=\alpha(X, p)\in (0,1)$ and $C=C\left(X, p, \omega_{orb},  \left\|\omega^n /\omega_{orb}^n \right\|_{L^p(X, \omega_{orb})} \right)$ such that
$$ d_{\omega}(p, q) \leq  Cd_{\omega_{orb}}(p, q)^\alpha $$
for any two points $p, q \in X$.
\end{lemma}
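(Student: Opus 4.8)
The plan is to deduce the distance estimate from the Hölder estimate for the potential in Lemma \ref{holder1}, exactly as in the manifold case treated by \cite{Li}, checking that each step is insensitive to the orbifold structure. Write $\omega = \omega_{orb} + \ddbar\varphi$ with $\varphi$ normalized by $\sup_X \varphi = 0$; then $\omega^n/\omega_{orb}^n = F$ with $\|F\|_{L^p(X,\omega_{orb})} < \infty$, so Lemma \ref{holder1} gives $\|\varphi\|_{C^\alpha(X,\omega_{orb})} \le C$ for some $\alpha \in (0,1)$ depending only on $X$ and $p$ (shrinking $\alpha$ if necessary). The goal is to convert this into $d_\omega(p,q) \le C d_{\omega_{orb}}(p,q)^{\alpha/2}$, say; the precise exponent is immaterial for our applications, so I will not be careful about optimizing it.

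First I would reduce to a local statement on a single orbifold chart. Cover $X$ by finitely many orbifold charts $\pi_i : \widetilde{U}_i \to U_i$, where $\widetilde{U}_i \subset \mathbb{C}^n$ is a ball and $U_i = \widetilde{U}_i / G_i$ for a finite group $G_i$. Because $X$ is compact, there is $r_0 > 0$ such that any two points $p,q$ with $d_{\omega_{orb}}(p,q) < r_0$ lie in a common chart $U_i$; for points at distance $\ge r_0$ the inequality is trivial after enlarging $C$, since $\mathrm{diam}(X,\omega)$ is finite (it is controlled once we have the local estimate, or a priori by compactness of a fixed $\omega$). On $\widetilde{U}_i$ the pulled-back forms $\pi_i^*\omega_{orb}$, $\pi_i^*\omega$ are genuine smooth (resp. $C^\alpha$-potential) Kähler forms, invariant under $G_i$, and $\pi_i^*\varphi$ is $G_i$-invariant with $\|\pi_i^*\varphi\|_{C^\alpha} \le C$. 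It suffices to prove the Hölder distance bound upstairs, since $\pi_i$ is a Riemannian submersion off a measure-zero set and distances downstairs are infima of lengths of liftable paths; a length-minimizing path for $\omega$ downstairs lifts, and its $\omega_{orb}$-length downstairs is bounded below by $d_{\omega_{orb}}$ downstairs.

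The core local estimate is the standard one: on the Euclidean ball, with $\omega = \omega_{orb} + \ddbar\varphi \le C'\omega_{orb} + \ddbar\varphi$ and $\|\varphi\|_{C^\alpha}\le C$, I would estimate the $\omega$-length of a straight segment $\gamma$ joining $p,q$ (with $|p-q| \sim d_{\omega_{orb}}(p,q) =: s$ small) by writing $\omega \le C(\omega_{orb} + \ddbar\varphi)$ and using that for a $C^\alpha$ function, a mollification $\varphi_\eta$ at scale $\eta$ satisfies $|\ddbar\varphi_\eta| \le C\eta^{\alpha - 2}$ and $|\varphi - \varphi_\eta| \le C\eta^\alpha$; integrating $\ddbar\varphi$ against the line via Stokes/one-dimensional restriction and optimizing $\eta \sim s$ yields $\mathcal{L}_\omega(\gamma)^2 \le C\big(s^2 + s^\alpha\big) \le C s^\alpha$, hence $d_\omega(p,q) \le C d_{\omega_{orb}}(p,q)^{\alpha/2}$. (This is precisely the computation in \cite{Li}; I would cite it and only indicate the mollification step.) The main obstacle — and really the only point needing care — is the interface between the orbifold charts and the length/distance comparison: one must make sure that minimizing paths for $d_\omega$ can be taken to avoid or harmlessly pass through the orbifold locus and lift to the uniformizing covers, and that the finitely many local Hölder constants and chart sizes can be absorbed into one global constant. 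Everything else is the standard $C^\alpha$-potential $\Rightarrow$ Hölder-distance argument applied chart by chart.
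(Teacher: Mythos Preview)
Your reduction to orbifold charts and the invocation of Lemma \ref{holder1} are fine and match the paper. The gap is in your ``core local estimate.'' You propose to bound the $\omega$-length of a straight segment $\gamma$ by mollifying $\varphi$ and invoking ``Stokes/one-dimensional restriction'' on $\ddbar\varphi$. This does not work as stated: writing $v=\gamma'$ and $v=\xi+\bar\xi$ with $\xi\in T^{1,0}$, the integrand $g_{\ddbar\varphi}(v,v)=2\varphi_{\xi\bar\xi}$ is the \emph{complex} Hessian entry, i.e.\ (up to a constant) the Laplacian of $\varphi$ in the $2$-plane spanned by $v$ and $Jv$, not the second derivative of $\varphi$ along $\gamma$. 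Integrating it along the real segment does not telescope to boundary terms, and your mollification splitting leaves $\int_\gamma g_{\ddbar(\varphi-\varphi_\eta)}(v,v)\,dt$ uncontrolled, since the bound $|\varphi-\varphi_\eta|\le C\eta^\alpha$ says nothing about this line integral. Nor is this the computation in \cite{Li}.

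The paper (following \cite{Li}) argues differently and more robustly. On the uniformizing cover set $f=d_{\tilde\omega}(\tilde p,\cdot)$; then $|\nabla f|_{\tilde\omega}^2\le 1$ a.e., hence $|\nabla f|_{\tilde\omega_{orb}}^2\le \tr_{\tilde\omega_{orb}}(\tilde\omega)$. The key step is a \emph{full} $2n$-dimensional integration by parts against a cutoff $\tilde\rho_r$ supported on $B_{\tilde\omega_{orb}}(\tilde q,2r)$:
\[
\int_{B_r}\tr_{\tilde\omega_{orb}}(\tilde\omega)\,\tilde\omega_{orb}^n
\le C\int\tilde\rho_r\,\tilde\omega_{orb}^n
+ n\int(\tilde\varphi-\tilde\varphi(\tilde p))\,\ddbar\tilde\rho_r\wedge\tilde\omega_{orb}^{n-1}
\le Cr^{2n}+Cr^{-2+\alpha}\cdot r^{2n},
\]
using $|\ddbar\tilde\rho_r|\le Cr^{-2}\tilde\omega_{orb}$ and the H\"older bound $|\tilde\varphi-\tilde\varphi(\tilde p)|\le Cr^\alpha$ on $B_{2r}$. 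This yields $\int_{B_r}|\nabla f|_{\tilde\omega_{orb}}^2\,\tilde\omega_{orb}^n\le Cr^{2n-2+\alpha}$, and Morrey's embedding then gives $f\in C^{\alpha/2}$, i.e.\ $d_{\tilde\omega}(\tilde p,\tilde q)\le C\,d_{\tilde\omega_{orb}}(\tilde p,\tilde q)^{\alpha/2}$, which descends to the orbifold. Note that the integration by parts is over balls (where it is legitimate), not along a $1$-dimensional segment; and no geodesic-lifting argument is needed, since one works directly with the Lipschitz function $f$ rather than with minimizing paths. Replacing your sketch of the local step with this Morrey argument fixes the proof.
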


\begin{proof} The proof follows the argument of \cite{Li}. We include the argument for the sake of completeness since it is fairly short and effective.

For any $p\in X$ we consider the distance function $f(z)=d_{\omega}(p,z)$. Let $\pi:\tilde B\rightarrow B$ be a local orbifold uniformization at $p$ on a metric ball $B=B_{\omega_{orb}}(p,4r_0)$ for some uniform radius $r_0$. Let $\tilde{\omega}_{orb}=\pi^*\omega_{orb}$.  Denote $\tilde B=\tilde B_{\tilde{orb}}(\tilde{p},4r_0)$ the lifting metric ball and $\tilde f(\tilde z)=d_{\tilde \omega_{orb}}(\tilde p,\tilde z)$ the distance function accordingly.

For any $\tilde{q}\in \tilde B_{\tilde{\omega}_{orb}}(\tilde{p},r_0)$ and radius $r\le r_0$, we define a cut-off function $\tilde\rho_r$ via
$$\tilde\rho_r = F(d_{\tilde\omega_{orb}}(\tilde q, \cdot)/r)$$
where $d_{\tilde\omega_{orb}}$ is the distance function with respect to $\tilde\omega_{orb}$ and $F$ is one smooth nonnegative cut-off function with $F(x)=1$ for $x\in [0, 1]$ and $F(x)=0$ for $x\geq 2.$ Then $-C_1 r^{-2} \tilde \omega_{orb}\leq \ddbar \tilde \rho_r \leq C_1 r^{-2} \tilde \omega_{orb}$ in $\tilde B_{\tilde{\omega}_{orb}}(\tilde{q},2r)\subset \tilde B$ for some fixed $C_1>0$, and for $r<r_0$ there exist $C_2, C_3, C_4 >0 $ such that
\begin{eqnarray*}
\int_{\tilde B_{\tilde\omega_{orb}}(\tilde q, r)} tr_{\tilde \omega_{orb}}(\tilde \omega) \tilde\omega_{orb}^n &\leq & \int_{\tilde B_{\tilde{\omega}_{orb}}(\tilde{q},2r)} \tilde \rho_r tr_{\tilde \omega_{orb}}(\tilde \omega)\tilde \omega_{orb}^n\\
&=&\int_{\tilde B_{\tilde{\omega}_{orb}}(\tilde{q},2r)} \tilde \rho_r \tilde \omega_{orb}^n + n\int_{\tilde B_{\tilde{\omega}_{orb}}(\tilde{q},2r)} \tilde \rho_r \ddbar \tilde \varphi \wedge \tilde \omega_{orb}^{n-1}\\
&\leq& C_2 r^{2n} + n\int_{\tilde B_{\tilde{\omega}_{orb}}(\tilde{q},2r)}(\tilde\varphi -\tilde\varphi(\tilde p)) \ddbar \tilde \rho_r  \wedge \tilde \omega_{orb}^{n-1}\\
&\leq& C_2 r^{2n} + C_3r^{-2+\alpha}\int_{\tilde B_{\tilde\omega_{orb}}(\tilde p, 2r)} \omega_{orb}^n\\
&\leq& C_4 r^{2n-2+\alpha}.
\end{eqnarray*}
Obviously, $|\nabla\tilde f|^2_{\tilde \omega} =1$. Therefore $|\nabla\tilde  f |_{\tilde \omega_{orb}} \leq tr_{\tilde\omega_{orb}}(\tilde\omega).$ It follows that for all $r\leq r_0$, we have
$$\int_{\tilde B_{\tilde \omega_{orb}}( \tilde q, r)} |\nabla \tilde f|^2_{\tilde\omega_{orb}} \tilde \omega_{orb}^n  \leq C_4 r^{2n-2+\alpha}. $$
Then by Morrey's embedding theorem and the fact that $\tilde f(\tilde p)=0$, we have
$$|\tilde f|_{C^{\alpha/2}(B_{\tilde\omega_{orb}}(\tilde p, \beta r_0))} \leq C_5$$
for some fixed $C_5>0$. In particular, we have
$$d_{\tilde{\omega}}(\tilde p,\tilde q)\le C_5 d_{\tilde \omega_{orb}}(\tilde p,\tilde q)^{\alpha/2}$$
for all $\tilde q\in \tilde B_{\tilde \omega_{orb}}(\tilde p,r_0). $
The corresponding distance function on $X$ satisfies
$$d_\omega(p,q)\le d_{\tilde{\omega}}(\tilde p,\tilde q)\le C_5 d_{\tilde \omega_{orb}}(\tilde p,\tilde q)^{\alpha/2} =C_5 d_{\omega_{orb}}(p,q)^{\alpha/2}$$
for all $q\in B_{\omega_{orb}}(p,r_0).$ 
This completes the proof for the lemma.
\end{proof}

\begin{proposition} \label{contpro2} If $X_{can}$ has only orbifold singulairties, then $(\mathcal{Z}, d_\mathcal{Z})$ is homeomorphic to $X_{can}$.

\end{proposition}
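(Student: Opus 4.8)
The plan is to deduce this from Proposition~\ref{contpro}(3) together with the orbifold H\"older estimates above and the Schwarz lemma. By Proposition~\ref{contpro}(3), $(\mathcal{Z}, d_\mathcal{Z})$ is isometric to the metric completion $(Z_{can}, d_{Z_{can}})$ of $(X_{can}^\circ, g_{can})$, so it suffices to show $(Z_{can}, d_{Z_{can}})$ is homeomorphic to $X_{can}$. Since $X_{can}$ has only orbifold singularities, fix an orbifold K\"ahler metric $\omega_{orb}$ in the ample orbifold class $[\chi]$; then $(X_{can}, d_{\omega_{orb}})$ is a compact metric space whose topology is the usual (analytic) topology of $X_{can}$, and $X_{can}^\circ$ is a dense open subset whose $d_{\omega_{orb}}$-completion is $X_{can}$ itself. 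I will show that the identity of $X_{can}^\circ$ is uniformly continuous both as a map $(X_{can}^\circ, d_{\omega_{orb}}) \to (X_{can}^\circ, g_{can})$ and as a map $(X_{can}^\circ, g_{can}) \to (X_{can}^\circ, d_{\omega_{orb}})$; extending both maps to the completions and noting that each composite is the identity on the dense subset $X_{can}^\circ$, one concludes that the extension $X_{can} \to (Z_{can}, d_{Z_{can}}) \cong (\mathcal{Z}, d_\mathcal{Z})$ is a homeomorphism.

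For the first direction, recall from \cite{ST1, ST2} (see also \cite{EGZ, FGS}) that $\omega_{can} = \chi + \ddbar \psi_{can}$ is a K\"ahler current on $X_{can}$, smooth on $X_{can}^\circ$, with $\psi_{can} \in L^\infty$, and that it satisfies a complex Monge-Amp\`ere equation $\omega_{can}^\kappa = F'\, \omega_{orb}^\kappa$ on the orbifold $X_{can}$ with $F' \ge 0$ and $F' \in L^p(X_{can}, \omega_{orb}^\kappa)$ for some $p > 1$; the $L^p$ bound here incorporates the controlled growth of the fibrewise Calabi-Yau/Weil-Petersson density across the discriminant of $\Phi$ and across the orbifold locus. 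Lemma~\ref{holder2} (whose proof uses only this $L^p$ bound, via the H\"older estimate for $\psi_{can}$ supplied by Lemma~\ref{holder1}, and so applies to the current $\omega_{can}$, after a routine smoothing within $[\chi]$ with uniformly $L^p$-bounded densities if one prefers) then gives $\alpha \in (0,1)$ and $C > 0$ with $d_{g_{can}}(p,q) \le C\, d_{\omega_{orb}}(p,q)^\alpha$ for all $p, q \in X_{can}$, where $d_{g_{can}}$ is the length pseudometric of $\omega_{can}$. In particular the identity $(X_{can}^\circ, d_{\omega_{orb}}) \to (X_{can}^\circ, g_{can})$ is uniformly continuous.

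For the reverse direction I use the Schwarz lemma. By Lemma~\ref{schwa}, $\omega(t) \ge c_0\,\chi = \tfrac{c_0}{m}\,\Phi^*\omega_{FS}$ on $X$ uniformly in $t$, and by Lemma~\ref{equivalence}, $\omega(t) \to \Phi^*\omega_{can}$ in $C^0$ on compact subsets of $\Phi^{-1}(X_{can}^\circ)$; hence $\omega_{can} \ge \tfrac{c_0}{m}\,\omega_{FS}|_{X_{can}}$ on $X_{can}^\circ$. Consequently, for $p, q \in X_{can}^\circ$ the $\omega_{can}$-length of any path in $X_{can}^\circ$ joining them is at least $\sqrt{c_0/m}$ times its $\omega_{FS}$-length, which is at least $\sqrt{c_0/m}\,\rho(p,q)$ where $\rho$ is the Fubini-Study distance in $\mathbb{CP}^{N_m}$; thus $d_{g_{can}}(p,q) \ge \sqrt{c_0/m}\,\rho(p,q)$. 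Since $\rho$ restricted to $X_{can}$ metrizes $X_{can}$, the identity $(X_{can}, \rho) \to (X_{can}, d_{\omega_{orb}})$ is a homeomorphism of compact spaces, hence uniformly continuous; combined with the previous inequality this shows $(X_{can}^\circ, g_{can}) \to (X_{can}^\circ, d_{\omega_{orb}})$ is uniformly continuous as well, which completes the argument. Finally, the supplementary assertion in Theorem~\ref{main2} about $\dim_\mathbb{C} X_{can} \le 2$ follows because canonical models of dimension at most two have only quotient (orbifold) singularities.

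The crux is the second step: the $L^p$-regularity, for some exponent $p > 1$, of the Monge-Amp\`ere density of $g_{can}$ against a smooth orbifold volume form on $X_{can}$ — uniformly over the orbifold locus and near the discriminant of $\Phi$ where the Calabi-Yau fibres degenerate. Once this is granted, Lemmas~\ref{holder1}--\ref{holder2} and the Schwarz lemma close the proof essentially formally; the only other point requiring (routine) care is the passage of Lemma~\ref{holder2}, stated for smooth orbifold metrics, to the degenerate current $\omega_{can}$.
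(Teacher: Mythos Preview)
Your argument is correct and follows the same two--sided strategy as the paper: the H\"older distance estimate of Lemma~\ref{holder2} (fed by the $L^p$ bound on the Monge--Amp\`ere density, Lemma~\ref{holder1}) gives $d_{g_{can}}\le C\,d_{\omega_{orb}}^\alpha$, while the Schwarz lemma gives the reverse control $\omega_{can}\ge c\,\chi$; together these force the metric completion of $(X_{can}^\circ,g_{can})$ to be $X_{can}$.

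The one substantive difference is how each side justifies applying Lemma~\ref{holder2} to the singular current $\omega_{can}$. You invoke the proof of Lemma~\ref{holder2} directly for the intrinsic distance $d_{g_{can}|_{X_{can}^\circ}}$ and call the passage ``routine''; this does go through, since $\psi_{can}\in C^\alpha$ by Lemma~\ref{holder1}, the distance function is bounded by Proposition~\ref{contpro}, and the codimension--two singular set is removable for $W^{1,2}$. The paper instead works entirely in the smooth orbifold category: it shows $F=-\log(\omega_{can}^\kappa/\omega_{orb}^\kappa)$ is quasi--psh and bounded above, takes decreasing smooth approximants $F_j$, solves $(\omega_{orb}+\ddbar\varphi_j)^\kappa=c_je^{-F_j}\omega_{orb}^\kappa$, and checks the resulting $g_j$ have uniform Ricci lower bound and diameter bound; it then \emph{reruns the entire convexity argument of Proposition~\ref{convex1}} for the family $g_j$ to produce almost--minimizing paths in $D_{\epsilon_2}\subset X_{can}^\circ$, which is what lets the H\"older bound $d_{g_j}\le C\,d_{g_{orb}}^\alpha$ survive the limit $j\to\infty$ as a bound on the \emph{intrinsic} distance $d_{g_{can}|_{X_{can}^\circ}}$. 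Your ``routine smoothing'' parenthetical underplays this: if one smooths, the convexity step is not optional, because the $g_j$--geodesic realizing $d_{g_j}(p,q)$ may well pass through $X_{can}\setminus X_{can}^\circ$. Your direct route avoids that machinery and is cleaner; the paper's route keeps every estimate in the smooth setting at the cost of redoing Section~2's main construction.
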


\begin{proof} We will break the proof into the following steps.

\noindent {\bf Step 1.}  Let $\omega_{orb}$ be a smooth orbifold K\"ahler metric on $X_{can}$ in the same class of $\omega_{can}$.  Let $F=-\log \left( \frac{\omega_{can}^\kappa}{(\omega_{orb})^\kappa} \right)$. We claim that $F$ is bounded above. For simplicity, we assume $X_{can}$ is smooth. We let $\omega(t)$, $\chi$, $\omega_A$ and $\psi(t)$ be defined as in (\ref{contin1}). We then consider the following quantity
$$H= \log tr_{\omega(t)}(\chi) - B \psi(t) $$
for $t\in (0, 1]$ and some sufficiently large $B>0$ to be determined. Straightforward calculations show that there exist $C_1, C_2>0$ such that for all $t\in (0, 1]$
$$\Delta_t \log H \geq (B-C_1)tr_{\omega(t)}(\chi) - C_2,$$
using the fact that $\psi(t)$ is uniformly bounded in $L^\infty(X)$ for $t\in (0,1]$ and the curvature of $\chi$ is uniformly bounded, where $\Delta_t$ is the Laplace operator associated to $\omega(t)$. After applying the maximum principle, we conclude that $H$ is uniformly bounded above and so there exists $C_3>0$ such that
$$tr_{\omega(t)}(\chi) \leq C_3$$
and we immediately obtain the upper bound for $F$ by letting $t\rightarrow 0$ and using the mean value inequality since $\omega(t)$ converges to $\omega_{can}$ on $X^\circ$ and $\chi$ is equivalent to $\omega_{orb}$.

Let $\Omega$ be the smooth volume form on $X$ such that $$\ddbar\log \Omega = \chi \in [K_X]$$
as before.
If we let $\omega_{can} = \chi + \ddbar \varphi_{can}$,
$$\omega_{can}^\kappa= (\chi + \varphi_{can})^\kappa = e^{\varphi_{can}}\Phi_*\Omega= e^{-F} (\omega_{orb})^\kappa$$
for some $\varphi_{can}\in PSH(X_{can}, \chi)\cap C^0(X_{can})$, where $\Phi_*\Omega$ is the pushforward or fibre-integration of $\Omega$. In particular, $\varphi_{can}$ is smooth on $X_{can}^\circ$.
By \cite{ST2}, there exists $p>1$ such that $$||e^{-F}||_{L^p(X_{can}, \omega_{orb})}<\infty .$$
Also we have$$\ddbar F = - \omega_{can} +\omega_{WP} - Ric(\omega_{orb}) .$$
where the Weil-Petersson current $\omega_{WP}$ is a  $(1,1)$-current from the variation of the complex structure of smooth Calabi-Yau fibres. Since $\omega_{WP}$ is semi-positive on $X_{can}^\circ$, we have
$$Ric(\omega_{orb})+\omega_{can}+\ddbar F \geq 0$$
 on $X_{can}^\circ$ and $\omega_{can}$ has bounded local potentials. This implies that $F$ is quasi-plurisubharmonic on $X_{can}^\circ$ with respect to $Ric(\omega_{orb})+\omega_{can}$, by extension theorem of quasi-plurisubharmonic function, $F$ must be also quasi-plurisubharmonic on $X_{can}$ since $F$ is bounded above and $X_{can}\setminus X_{can}^\circ$ is an analytic subvariety of $X_{can}$, i.e., 
$$F\in PSH(X_{can}, Ric(\omega_{orb})+\omega_{can}).$$ 
In particular, it implies that $\omega_{WP}$ is a global semi-positive current on $X_{can}$. In fact, the semi-positivity of $\omega_{WP}$ holds in general for any canonical models (\cite{GS}). By the standard approximation theory for plurisubharmonic functions, there exist a sequence orbifold smooth functions. $F_j \in PSH(X_{can}, B\omega_{orb}+\omega_{can})$ for some fixed constant $B>0$ such that $F_j$ converges to $F$ decreasingly. Then immediately, we have
$$||e^{-F_j}||_{L^p(X_{can}, \omega_{orb})}\leq ||e^{-F}||_{L^p(X_{can}, \omega_{orb})}<\infty. $$
We then consider the following complex Monge-Amp\`ere equations
$$(\omega_{orb}+\ddbar \varphi_j)^\kappa = c_j e^{-F_j} (\omega_{orb})^\kappa, $$
where $c_j$ is the norming constant with
$$[\omega_{orb}]^\kappa = c_j \int_{X_{can}} e^{-F_j} (\omega_{orb})^\kappa. $$
$c_j$ is uniformly bounded below because $F_j$ decreases to $F_j$ and it is also uniformly bounded above because $F_j$ are uniformly bounded from above.  Therefore 
$$||c_j e^{-F_j}||_{L^p(X_{can}, \omega_{orb})}$$ is uniformly bounded above for all $j$.

\noindent {\bf Step 2.} By Lemma \ref{holder1}, there exist $\alpha$ and  $C_1>0$ such that for all $j$,
$$||\varphi_j||_{C^\alpha (X_{can}, \omega_{orb})} \leq C_1. $$
and by Lemma \ref{holder2} that there exists $C_2$ such that on $X$,
$$d_{g_j}(\cdot, \cdot)  \leq C_2 d_{g_{orb}}(\cdot, \cdot)^\alpha.$$
We then apply the maximum principle to the following quantity for sufficiently large $K>0$
$$H = \log tr_{\omega_j}(\omega_{orb}) - K \varphi_j,$$
where $\omega_j = \omega_{orb}+ \varphi_j$. 
Then similarly to the Schwarz lemma, there exists $C_3>0$ such that for all $j$,
$$tr_{\omega_j}(\omega_{orb})\leq C_3, $$
or equivalently,
$$\omega_j \geq (nC_3)^{-1} \omega_{orb}$$
since the curvature of $\omega_{orb}$ is bounded and $\varphi_j$ is uniformly bounded in $L^\infty$ for all $j$.

In particular, this implies that there exists $C_4, C_5>0$ such that
$$Ric(\omega_j) = \ddbar F_j +Ric(\omega_{orb}) \geq - C_4 \omega_{orb} \geq -  C_5\omega_j.$$
Therefore the Ricci curvature of $g_j$ are uniformly bounded below. Let $g_j$ and $g_{orb}$ be the orbifold K\"ahler metrics associated to $\omega_j$ and $\omega_{orb}$. We can apply the result of \cite{FGS} and there exists $D>0$ such that
$$diam_{g_j}(X_{can}) \leq D. $$

\noindent {\bf Step 3.} Let $\sigma$ be an effective $\mathbb{Q}$-divisor in the class $[\omega_{can}]$ such that $X_{can}\setminus X_{can}^\circ$ is contained in the support of $\sigma$ and $h$ is the smooth hermitian metric on $[\sigma]$ with $Ric(h) = \omega_{orb}$. We define $D_\epsilon = \{ |\sigma|^2_h < \epsilon \}.$  The same argument in   Proposition \ref{convex1} implies that for for any $\delta>0$, there exist $\epsilon_1 > \epsilon_2 >0$ such that for all $j\geq 0$,

\begin{enumerate}

\item $Vol( X\setminus D_{\epsilon_1}, g_j) < \delta$.

\item for any two points $p, q \in D_{ \epsilon _1}$, there exists a continuous path $\gamma_j \subset D_{\epsilon _2}$ joining $p$ and $q$ such that
$$\mathcal{L}_{g_j}(\gamma_j)\leq d_{g_j}(p, q) + \delta \leq D+\delta. $$

\end{enumerate}

We apply  Lemma \ref{holder2} again. There exist $\alpha\in (0,1)$ and $C_\alpha>0$ such that for any $j$ and any two points $p, q\in X_{can}$
$$d_{g_j}(p, q) \leq C_\alpha d_{g_{orb}}(p, q)^\alpha. $$
After letting $j\rightarrow \infty$, $g_j$ converges to $g_{can}$ smoothly on $X_{can}^\circ$
and for any $p, q\in X_{can}^\circ$, we have
$$d_{g_{can}|_{X_{can}^\circ}}(p, q) \leq C_\alpha d_{g_{orb}}(p, q)^\alpha, $$
where $d_{g_{can}|_{X_{can}^\circ}}$ is the distance function on $X_{can}^\circ$ induced by $g_{can}$.

Since $g_{can}\geq cg_{orb}$ for some $c>0$ on $X_{can}^\circ$ and $(\mathcal{Z}, d_\mathcal{Z})$ is the metric completion of $(X_{can}^\circ, \omega_{can})$,   the local isometry map from $X_{can}^\circ$ into $\mathcal{Z}$ extends to a surjective Lipschitz map
$$\mathcal{F}: (\mathcal{Z}, d_\mathcal{Z}) \rightarrow (X_{can}, g_{orb}).$$
We claim that $\mathcal{F}$ must be injective. For any point $z\in X_{can}$, there exist a sequence $\{z_i\} \in X_{can}^\circ$ converging to $z$ with respect to $g_{orb}$. The inequality
$$d_{g_{can}|_{X_{can}^\circ}}(\cdot, \cdot) \leq C \left(d_{g_{orb}}(\cdot, \cdot)|_{X_{can}^\circ }\right)^\alpha$$ implies that
$\{z_i\}$ is also a Cauchy sequence in $(X_{can}^\circ, g_{can})$. Since $(\mathcal{Z}, d_\mathcal{Z})$ is the metric completion of $(X_{can}^\circ, g_{can})$, $\{z_i\}$ must also be a Cauchy sequence of $(\mathcal{Z}, d_\mathcal{Z})$ and so $\mathcal{F}$ must also be injective. This completes the proof of the proposition.

\end{proof}

\section{Proof of Theorem \ref{main3} }
In this section, we will prove Theorem \ref{main3} by the same argument in the proof of Theorem \ref{main2}.
Let $X$ be an $n$-dimensional projective K\"ahler manifold of $c_1(X)=0$.  Suppose $L$ is a semi-ample line bundle and the linear system $|mL|$ induces a holomorphic map
$$\Phi: X \rightarrow Y$$
for sufficiently large $m$. We assume $0< \dim_{\mathbb{C}}Y=k <n$. The general fibre of $X$ over $Y$ is a smooth Calabi-Yau manifold of dimension $n-\kappa$. Let $Y^\circ$ be the set of all smooth points of $Y$ over which $\Phi$ is submersion.  Let $A$ be an ample line bundle over $X$ then by Yau's theorem, there exists a unique Ricci-flat K\"ahler metric $\omega(t) \in [tA+L]$.
In other words, let $\Omega$ be a smooth volume form on $X$ with
$$\ddbar\log \Omega=0,~ \int_X \Omega =1,$$ then
$$\omega(t)^n = c_t \Omega, $$
where $c(t)$ is the normalization constant defined by $c_t = [tA+L]^n$. In particular, $c_t$ is uniformly bounded above and below away from $0$ for all $t\in (0, 1]$.

Let $\omega_A$ be a fixed K\"ahler form in $[A]$ and $\chi$ be pullback of the Fubini-Study metric on $Y$ from the projective embedding of the linear system $|mL|$. Then we have
$$t^{-(n-\kappa)} (t \omega_A + \chi + \ddbar\varphi(t))^n =c_t\Omega.$$
It is shown in \cite{ST1, ST2} that $\omega(t)= t \omega_A + \chi + \ddbar\varphi(t)$ converges weakly as $t\rightarrow 0$ to the twisted K\"ahler-Einstein metric $\omega_Y$ on $Y$ defined in \cite{ST1, ST2} is given by
$$Ric(\omega_Y) = \omega_{WP},$$
where $\omega_{WP}$ is the Weil-Petersson current from the variation of the complex structures of the Calabiy-Yau fibres over $Y$. More precisely, $\omega_Y = \chi + \ddbar\varphi_Y$ for some $\varphi_Y\in PSH(Y, \chi)\cap C^0(Y)$ and
$$\omega_Y^\kappa= (\chi+ \ddbar \varphi_Y)^\kappa = c_0\Phi_* \Omega,$$
where $\Phi_*\Omega$ is the pushforward of $\Omega$, or the integration over the fibres.  In particular, $\omega_Y$ is smooth on $Y^\circ$. The following proposition is proved by exactly the same argument in the proof of Proposition \ref{contpro}.

\begin{proposition}
Let $\omega_t= t\omega_A + \chi + \ddbar\varphi(t)$ and $g(t)$ be the associated K\"ahler metric on $X$. Then the following hold.
\begin{enumerate}

\item $(X, g_t)$ converges in Gromov-Hausdorff topology to a compact metric space $(\mathcal{Z}, d_\mathcal{Z})$ as $t\rightarrow 0^+$,

\item  $\omega_t$ converges in $C^0$-topology on $\Phi^{-1}(Y^\circ)$ to the pullback of a smooth K\"ahler metric $\omega_Y$ on $Y^\circ$ as $t\rightarrow 0^+$,

\item the metric completion of $(Y^\circ, \omega_{orb})$ is isomorphic to $(\mathcal{Z}, d_\mathcal{Z})$.

\end{enumerate}

\end{proposition}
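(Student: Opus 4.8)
The plan is to mimic the proof of Proposition \ref{contpro} almost verbatim, since the analytic inputs for the Calabi-Yau fibration case are essentially the same as in the twisted K\"ahler-Einstein setting. First I would establish the three ingredients that drove the earlier argument: (i) sequential Gromov-Hausdorff convergence of $(X, g(t))$ to some compact $(\mathcal{Z}, d_\mathcal{Z})$ along any subsequence $t_j \to 0$, which follows from the diameter bound of \cite{FGS} together with the volume lower bound $Vol(B_{g(t)}(p,r)) \geq c r^{2n} t^{n-\kappa}$ (Corollary \ref{volr}), both of which apply here because $Ric(\omega(t)) = 0 \geq -\omega(t)$; (ii) the $C^0$-convergence of $\omega(t)$ to $\Phi^*\omega_Y$ on $\Phi^{-1}(Y^\circ)$, which is the content of \cite{TWY} (the analogue of Lemma \ref{equivalence}, with $\omega_{SF}$ now the genuine semi-flat form and $\omega_{can}$ replaced by $\omega_Y$); and (iii) the smoothness of $\omega_Y$ on $Y^\circ$ together with the fact that $(Y^\circ, \omega_Y)$ embeds locally isometrically as a dense open subset of $\mathcal{Z}$, again by \cite{FGS}.

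Next I would transplant the convexity statement. The cutoff function machinery of Lemmas \ref{cutoff1}, \ref{bdyarea}, \ref{smvol} only used that the right-hand side of the complex Monge-Amp\`ere equation is $e^{\psi}\Omega$ (bounded) times $t^{-(n-\kappa)}$ and that $\chi = \frac{1}{m}\Phi^*\omega_{FS}$ lies in a class containing an effective divisor whose support covers $Y \setminus Y^\circ$. All of this holds in the present situation — indeed the volume form equation $\omega(t)^n = c_t \Omega$ is even simpler, and the Schwarz-type lower bound $\omega(t) \geq c\chi$ (Lemma \ref{schwa}) goes through by the same maximum-principle argument applied to $\log \tr_{\omega(t)}(\chi) - K\varphi(t)$. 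Hence the exact analogue of Proposition \ref{convex1} holds: for any $\delta>0$ there are $\epsilon_1 > \epsilon_2 > 0$ with $Vol(X \setminus D_{\epsilon_1}, \omega(t)) < \delta$ and with any two points of $D_{\epsilon_1}$ joinable inside $D_{\epsilon_2}$ by a path of length at most $d_{\omega(t)}(p,q) + \delta$.

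With these in hand, the identification of $(\mathcal{Z}, d_\mathcal{Z})$ with the metric completion of $(Y^\circ, \omega_Y)$ proceeds exactly as in Proposition \ref{contpro}: for $p, q \in Y^\circ$ pick lifts $p', q'$ in the fibres, use the convexity to find paths $\gamma_{t_j}$ in $D_\epsilon$ of controlled length, push them down by $\Phi$ and use the uniform fibre-collapsing plus $C^0$-convergence of $g(t_j)$ to $\omega_Y$ on $D_\epsilon$ to get $d_{\omega_Y|_{Y^\circ}} \leq d_\mathcal{Z}$; the reverse inequality comes from lifting an almost-minimizing path in $Y^\circ$ back up. This yields $d_{\omega_Y|_{Y^\circ}} = d_\mathcal{Z}|_{Y^\circ}$, hence a bi-Lipschitz identification of $\mathcal{Z}$ with the metric completion via density of $Y^\circ$. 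Note the statement of the proposition as written refers to $\omega_{orb}$, which appears to be a typo for $\omega_Y$.

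I expect no genuinely new obstacle here — the whole point of this section is that the argument is a "slight modification." The one place requiring a little care is ensuring the input lemmas from \cite{TWY} (the $C^0$-equivalence and uniform fibre-diameter decay on $D_\epsilon$) are stated in the Calabi-Yau-fibration generality, and that the Monge-Amp\`ere potential $\varphi(t)$ is uniformly bounded (which follows from \cite{ST1, ST2} or directly from the Demailly-Pali / Ko{\l}odziej estimate applied to $t^{-(n-\kappa)}(t\omega_A + \chi + \ddbar\varphi)^n = c_t\Omega$). Beyond checking these citations line up, the proof is a direct copy of Proposition \ref{contpro}'s, so it can simply be stated as such.
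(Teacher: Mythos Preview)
Your proposal is correct and matches the paper's approach exactly: the paper's entire proof of this proposition is the single sentence that it ``is proved by exactly the same argument in the proof of Proposition \ref{contpro},'' and your writeup faithfully spells out why each ingredient (diameter bound, volume comparison, Schwarz lemma, cutoff estimates, $C^0$-convergence from \cite{TWY}, and the convexity of Proposition \ref{convex1}) transfers to the Calabi-Yau setting with $Ric(\omega(t))=0$. Your observation that $\omega_{orb}$ in part (3) is a typo for $\omega_Y$ is also correct.
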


The same argument in Proposition \ref{contpro2} also gives the following proposition.
\begin{proposition} If $Y$ has only orbifold singulairties, then $(\mathcal{Z}, d_\mathcal{Z})$ is homeomorphic to $X_{can}$.

\end{proposition}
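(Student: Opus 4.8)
The plan is to transcribe, essentially verbatim, the three-step proof of Proposition~\ref{contpro2} into the Calabi--Yau setting, the only structural change being that the twisted Kähler--Einstein equation $Ric(\omega_{can})=-\omega_{can}+\omega_{WP}$ on the base is replaced by the twisted Ricci-flat equation $Ric(\omega_Y)=\omega_{WP}$; if anything this simplifies the curvature bookkeeping since the Einstein term disappears. As before, the conclusion is that $(\mathcal{Z},d_\mathcal{Z})$ is homeomorphic to $Y$.

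First I would fix a smooth orbifold Kähler metric $\omega_{orb}$ on $Y$ in the class $[\omega_Y]=[\chi]$ and set $F=-\log\!\big(\omega_Y^\kappa/\omega_{orb}^\kappa\big)$, so that $\omega_Y^\kappa=c_0\,\Phi_*\Omega=e^{-F}\omega_{orb}^\kappa$ on $Y^\circ$. The upper bound for $F$ is obtained exactly as in Step~1 of Proposition~\ref{contpro2}: apply the maximum principle to $H=\log \tr_{\omega(t)}(\chi)-B\varphi(t)$ along the Ricci-flat family $\omega(t)\in[tA+L]$, using the uniform $L^\infty$-bound on $\varphi(t)$ and the bounded curvature of $\chi$, to get $\tr_{\omega(t)}(\chi)\le C$, then let $t\to0^+$ using the $C^0$-convergence $\omega(t)\to\Phi^*\omega_Y$ on $\Phi^{-1}(Y^\circ)$ from statement~(2) together with the mean value inequality. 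Since $\ddbar F=\omega_{WP}-Ric(\omega_{orb})$ and $\omega_{WP}\ge0$ on $Y^\circ$, we have $F\in PSH(Y^\circ,Ric(\omega_{orb}))$; as $F$ is bounded above and $Y\setminus Y^\circ$ is an analytic subvariety of $Y$, the extension theorem for quasi-plurisubharmonic functions gives $F\in PSH(Y,B\omega_{orb})$ for $B$ large. By \cite{ST1,ST2} the density $e^{-F}=c_0\,\Phi_*\Omega/\omega_{orb}^\kappa$ lies in $L^p(Y,\omega_{orb})$ for some $p>1$. Approximating $F$ from above by smooth orbifold functions $F_j\in PSH(Y,B\omega_{orb})$ with $\|e^{-F_j}\|_{L^p}\le\|e^{-F}\|_{L^p}$, I would solve the orbifold complex Monge--Amp\`ere equations $(\omega_{orb}+\ddbar\varphi_j)^\kappa=c_j e^{-F_j}\omega_{orb}^\kappa$, with norming constants $c_j$ bounded above and below, producing orbifold Kähler metrics $\omega_j=\omega_{orb}+\ddbar\varphi_j$ for which $\|c_j e^{-F_j}\|_{L^p}$ is uniformly bounded.

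Next, following Steps~2--3, Lemma~\ref{holder1} gives a uniform $C^\alpha$-bound on $\varphi_j$ and Lemma~\ref{holder2} gives $d_{g_j}\le C\,d_{g_{orb}}^{\,\alpha}$ on $Y$; the Schwarz-type maximum principle applied to $\log\tr_{\omega_j}(\omega_{orb})-K\varphi_j$ yields $\omega_j\ge c\,\omega_{orb}$, hence $Ric(\omega_j)=Ric(\omega_{orb})+\ddbar F_j\ge -C\omega_j$, so by \cite{FGS} the diameters $diam_{g_j}(Y)$ are uniformly bounded. Then, exactly as in Step~3 of Proposition~\ref{contpro2}, choosing an effective $\mathbb{Q}$-divisor $\sigma$ in $[\omega_Y]$ whose support contains $Y\setminus Y^\circ$, a hermitian metric $h$ on $[\sigma]$ with $Ric(h)=\omega_{orb}$, and $D_\epsilon=\{|\sigma|_h^2<\epsilon\}$, the argument of Proposition~\ref{convex1} applied to the non-collapsing family $\{g_j\}$ on the orbifold $Y$ gives, for every $\delta>0$, radii $\epsilon_1>\epsilon_2>0$ with $Vol(Y\setminus D_{\epsilon_1},g_j)<\delta$ and, for all $p,q\in D_{\epsilon_1}$, a path $\gamma_j\subset D_{\epsilon_2}$ with $\mathcal{L}_{g_j}(\gamma_j)\le d_{g_j}(p,q)+\delta$. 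Letting $j\to\infty$, where $g_j\to g_Y$ smoothly on $Y^\circ$, produces $d_{g_Y|_{Y^\circ}}(p,q)\le C_\alpha\,d_{g_{orb}}(p,q)^\alpha$ on $Y^\circ$; combined with $g_Y\ge c\,g_{orb}$ on $Y^\circ$ (obtained by passing the Schwarz estimate to the limit) and the fact that the local isometric embedding $(Y^\circ,g_Y)\hookrightarrow(\mathcal{Z},d_\mathcal{Z})$ realizes $Y^\circ$ as a dense open set, one concludes that this embedding extends to a surjective Lipschitz map $\mathcal{F}\colon(\mathcal{Z},d_\mathcal{Z})\to(Y,g_{orb})$. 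It is injective because any sequence in $Y^\circ$ that is Cauchy for $g_{orb}$ is, by the Hölder bound, Cauchy for $g_Y$, hence for $d_\mathcal{Z}$, so distinct points of $\mathcal{Z}$ have distinct images. Being a continuous bijection from the compact space $\mathcal{Z}$ onto the Hausdorff space $Y$, $\mathcal{F}$ is a homeomorphism.

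The step I expect to be the main obstacle is the same as in Proposition~\ref{contpro2}: verifying that the pushforward density $\Phi_*\Omega/\omega_{orb}^\kappa$ is genuinely $L^p$-integrable near the discriminant locus of $\Phi$ (this relies on the fibration estimates of \cite{ST1,ST2}), together with checking that the Hölder-continuous potentials $\varphi_j$, and hence the distance comparison of Lemma~\ref{holder2}, behave correctly under orbifold uniformization at the singular points of $Y$. The remainder is a routine adaptation of Section~2 with the Einstein term switched off, so no new analytic input is needed beyond what is already established for Theorem~\ref{main2}.
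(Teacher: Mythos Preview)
Your proposal is correct and follows exactly the approach the paper intends: the paper's own proof simply reads ``The same argument in Proposition~\ref{contpro2} also gives the following proposition,'' and you have faithfully transcribed that argument to the Calabi--Yau setting, correctly noting that the Einstein term drops out of the curvature identity for $F$ so that $F\in PSH(Y,Ric(\omega_{orb}))$ rather than $PSH(X_{can},Ric(\omega_{orb})+\omega_{can})$. You also rightly read the conclusion as ``homeomorphic to $Y$''; the appearance of $X_{can}$ in the statement is a typographical slip in the paper.
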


We have now proved Theorem \ref{main3} by combining the above two propositions.

\section{Proof of Theorem \ref{main1} for long time collapsing solutions of the K\"ahler-Ricci flow }

In this section, we will prove Theorem \ref{main1} using the results in Section 2 to derive a uniform diameter estimate for the normalized K\"ahler-Ricci flow on projective manifolds with semi-ample canonical bundle. As before, $X$ is an $n$-dimensional projective manifold with semi-ample $K_X$ and its canonical model $X_{can}$ has dimension $0<\kappa <n$. We will keep the same notations as in section 2. Let $\omega_0$ be any initial K\"ahler form associated to $g_0$.
Then the normalized K\"ahler-Ricci flow (\ref{krflow}) on $X$ %
starting with $g_0$ can be expressed as
 \begin{equation}\label{maflow}
\left\{
\begin{array}{l}
{ \displaystyle \ddt{\varphi} = \log \frac{ e^{(n-\kappa)t} (\chi + e^{-t} (\omega_0 - \chi) +\ddbar \varphi)^n}{\Omega} - \varphi,}\\
\\
\varphi(0) = 0.
\end{array} \right.
\end{equation}
In particular, the K\"ahler class of $[\omega(t)]$ associated to $g(t)$ satisfies
\begin{equation}\label{classcon}
[\omega(t)] = [\chi] + e^{-t}[\omega_0 - \chi]
\end{equation}
for all $t\in [0, \infty)$. 

The following lemma is obtained in \cite{ST1, ST2, SoT4}.
\begin{lemma} \label{0est} There exists $C>0$ such that for all $t\geq 0$, we have
$$||\varphi ||_{L^\infty(X)} + \left|\left| \ddt{\varphi} \right|\right|_{L^\infty(X)}\leq C. $$
Furthermore,  for any compact set $K \subset X^\circ$, $\varphi$ converges in $C^{1, \alpha}$ to $\varphi_{can}$ for any $\alpha \in [0, 1)$.

\end{lemma}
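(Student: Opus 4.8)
The plan is to establish the estimate in three stages, following the now-standard parabolic Monge–Amp\`ere machinery of \cite{ST1, ST2, SoT4}, which I would only sketch here since the statement is quoted verbatim from those references. First I would bound $\varphi$ itself. Writing the flow (\ref{maflow}) with the reference form $\omega(t) = \chi + e^{-t}(\omega_0 - \chi)$, one notes that $[\omega(t)]$ stays in a compact family of K\"ahler classes converging to $[\chi]$, so the associated family of background volume forms $\omega(t)^n$ is comparable (after the $e^{(n-\kappa)t}$ normalization) to $\chi^\kappa \wedge (\omega_0-\chi)^{n-\kappa}$ type quantities, hence uniformly bounded above; the reference to $\Omega$ with $\ddbar \log \Omega = \chi \in [K_X]$ is exactly what makes the potential-level equation have a fixed right-hand side structure. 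A barrier/maximum-principle argument on the parabolic equation for $\varphi$ (comparing with the solution of the associated elliptic Monge–Amp\`ere equation with the twisted K\"ahler-Einstein metric on $X_{can}$, which has bounded potential by \cite{DP, EGZ, Kol1}) gives $\sup_X |\varphi| \le C$ uniformly in $t$. This is the step I expect to be the main obstacle, since the degeneracy of the class $[\omega(t)] \to [\chi]$ forces the use of a pluripotential $L^\infty$-estimate for degenerate complex Monge–Amp\`ere equations rather than a naive maximum principle, and the collapsing factor $e^{(n-\kappa)t}$ must be tracked carefully.

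Second I would control $\dot\varphi = \partial\varphi/\partial t$. The standard device is to differentiate (\ref{maflow}) in $t$ and apply the maximum principle to the quantities $\dot\varphi + \varphi$ and related combinations such as $(e^t-1)\dot\varphi + \varphi + nt$ (the precise auxiliary function is the one used in \cite{ST1, ST2}); since $\Delta_{\omega(t)}\dot\varphi$ appears with a favorable sign and the already-established bound on $\varphi$ feeds in as a zeroth-order term, one obtains $\|\dot\varphi\|_{L^\infty(X)} \le C$ uniformly for all $t \ge 0$. Combined with the equation this also yields a uniform two-sided bound on $\log\big((\omega(t))^n/\Omega\big)$ up to the collapsing factor.

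Third, for the local $C^{1,\alpha}$ convergence on compact $K \subset X^\circ = \Phi^{-1}(X_{can}^\circ)$, I would invoke the local higher-order estimates of \cite{ST1, ST2}: away from the singular fibres the fibrewise Ricci-flat (semi-flat) structure together with the Schwarz-type estimate (as in Lemma \ref{schwa}) gives a uniform lower bound $\omega(t) \ge c\,\Phi^*\chi$ on $K$, and a local Laplacian estimate (Aubin–Yau / Evans–Krylov applied on fibred coordinate charts) upgrades this to uniform $C^{2,\alpha}_{loc}$ bounds on the potential modulo fibre directions; the limit equation is the twisted K\"ahler-Einstein equation $\text{Ric}(\omega_{can}) = -\omega_{can} + \omega_{WP}$ on $X_{can}^\circ$, whose solution $\varphi_{can}$ is unique, so any subsequential limit is $\varphi_{can}$ and the full family converges in $C^{1,\alpha}_{loc}$. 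Since the statement is attributed to the cited papers, for the purposes of this paper it suffices to record the estimates in the form stated and refer to \cite{ST1, ST2, SoT4} for the details.
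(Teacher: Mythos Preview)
Your proposal is appropriate. The paper itself does not give any proof of this lemma: it simply records the statement and attributes it to \cite{ST1, ST2, SoT4} with the prefatory sentence ``The following lemma is obtained in \cite{ST1, ST2, SoT4}.'' Your sketch is a faithful outline of the arguments in those references (maximum principle plus degenerate $L^\infty$-estimate for $\varphi$, differentiated equation and auxiliary functions for $\dot\varphi$, local Schwarz and second-order estimates for the $C^{1,\alpha}_{loc}$ convergence), and you correctly conclude that for the purposes of this paper a citation suffices.
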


The following lemma  is proved in \cite{TWY} for the local $C^0$-convergence of the evolving collapsing metrics. 
\begin{lemma}   Let $\omega(t)$ be the solutions of the K\"ahler-Ricci flow (\ref{krflow}). Then the following hold.

\begin{enumerate}

\item For any compact subset $K \subset X^\circ$,

$$\lim_{t\rightarrow \infty} ||\omega(t) - \omega_{can}||_{C^0(K, \omega_0)} =0. $$

\item For any compact subset $K' \subset X_{can}^\circ$, the fibre metric over any point in $K'$ converges after rescaling to a Ricci-flat K\"ahler metric uniformly. More precisely, let $\omega_{CY, y}$ be the unique Ricci-flat K\"ahler metric in the K\"ahler class $[\omega_0|_{X_y}$. Then
$$\lim_{t\rightarrow \infty} \sup_{y\in K'} || e^t \omega(t)|_{X_y} - \omega_{CY, y}||_{C^0(X_y, \omega_0|_{X_y}} = 0. $$

\end{enumerate}

\end{lemma}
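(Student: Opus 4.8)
The plan is to follow the same philosophy as in Section 2, transferring the elliptic $C^0$-estimates for the continuity-method family $\omega(t)$ to the parabolic setting, using Lemma \ref{0est} as the parabolic analogue of the $L^\infty$-bound on the potential. For statement (1), I would first note that (\ref{maflow}) and (\ref{classcon}) express $\omega(t)$ as $\chi + e^{-t}(\omega_0-\chi) + \ddbar\varphi$, so that as $t\to\infty$ the cohomology class $[\omega(t)]$ converges to $[\chi]$; combined with the $C^{1,\alpha}$-convergence $\varphi\to\varphi_{can}$ on compact subsets $K\subset X^\circ$ from Lemma \ref{0est}, one gets immediately that $\omega(t)\to\omega_{can}$ in the sense of currents (and in $C^{\alpha}$) on $K$. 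To upgrade to $C^0$, the key is a two-sided bound $(1-\eta(t))\Phi^*\omega_{can}\le \omega(t)\le (1+\eta(t))\Phi^*\omega_{can}$ on fixed compact subsets, with $\eta(t)\to 0$. The lower bound is the parabolic Schwarz lemma: apply the maximum principle to $\log\mathrm{tr}_{\omega(t)}(\chi) - K\varphi - \dot\varphi$ (or a similar combination controlled by Lemma \ref{0est}), using that $\chi$ has bounded curvature and the evolving metric dominates $\chi$ up to a constant. For the upper bound, one compares $\mathrm{tr}_{\omega_{can}+ e^{-t}\omega_{SF}}(\omega(t))$ against itself by a standard parabolic computation, exploiting $c_1$ of the fibres vanishing and the semi-flat structure. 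This is essentially the parabolic counterpart of Lemma \ref{equivalence}, and indeed this statement is attributed to \cite{TWY}; I would cite that paper for the core estimate rather than redo it.

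For statement (2), the strategy is the fibrewise rescaling argument. Fix $y\in K'\subset X_{can}^\circ$ and rescale $\tilde\omega(t):=e^t\omega(t)|_{X_y}$. From (\ref{classcon}), the class $[\tilde\omega(t)|_{X_y}] = [\chi|_{X_y}] + e^{-t}[\omega_0-\chi]|_{X_y}$, but $\chi = \frac1m\Phi^*\omega_{FS}$ restricts trivially to a fibre, so $[e^t\omega(t)|_{X_y}]\to [\omega_0|_{X_y}]$ as $t\to\infty$. Next, from the flow equation (\ref{maflow}) the volume form of $\omega(t)$ on $X_y$, after the $e^{(n-\kappa)t}$ scaling, is comparable to a smooth volume form independent of $t$ (using the $L^\infty$-bounds on $\varphi,\dot\varphi$), so $\tilde\omega(t)|_{X_y}^{\,n-\kappa}$ is uniformly equivalent to a fixed reference. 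Combined with a uniform fibrewise diameter bound and the local higher-order estimates on $X^\circ$ coming from Lemma \ref{0est} and (1), one extracts smooth convergence of $\tilde\omega(t)|_{X_y}$ to a Kähler metric in $[\omega_0|_{X_y}]$ whose Ricci form vanishes (the $-\omega(t)$ and twisting terms are $O(e^{-t})$ after rescaling), hence to $\omega_{CY,y}$ by Yau's uniqueness. The uniformity in $y\in K'$ follows because all constants depend only on $K'$ through fixed geometric data on the family, and one can pass from pointwise to uniform convergence by a compactness/contradiction argument over $K'$.

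The main obstacle I expect is the upper bound on $\omega(t)$ in statement (1) — equivalently controlling $\mathrm{tr}_{\Phi^*\omega_{can}+e^{-t}\omega_{SF}}(\omega(t))$ uniformly on compact subsets of $X^\circ$ as $t\to\infty$. The difficulty is that the reference metric $\Phi^*\omega_{can}+e^{-t}\omega_{SF}$ degenerates in the base directions (its curvature in the horizontal directions is not uniformly bounded as one approaches $X_{can}\setminus X_{can}^\circ$), so the parabolic Schwarz/Aubin-Yau estimate must be localized carefully and the error terms from differentiating the semi-flat potential $\phi_{SF}$ must be absorbed using the $C^{1,\alpha}$-control from Lemma \ref{0est}. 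Since this is exactly the content established in \cite{TWY}, the cleanest route is to invoke that work for the estimate and simply record statements (1) and (2) as consequences, which is what the excerpt does.
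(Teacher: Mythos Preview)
Your proposal is correct and matches the paper's approach: the paper does not prove this lemma at all but simply attributes it to \cite{TWY}, and you likewise conclude by citing \cite{TWY} for the core estimates. Your additional sketch of the parabolic Schwarz lemma for the lower bound, the trace estimate against $\Phi^*\omega_{can}+e^{-t}\omega_{SF}$ for the upper bound, and the fibrewise rescaling argument is a faithful outline of the \cite{TWY} strategy (one minor remark: since $\chi|_{X_y}=0$, you actually have $[e^t\omega(t)|_{X_y}]=[\omega_0|_{X_y}]$ exactly for all $t$, not just in the limit).
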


The uniform bound for the scalar curvature of global solutions of the normalized K\"ahler-Ricci flow is established in \cite{SoT4} as in the following lemma. 
\begin{lemma} The scalar curvature $R$ of $g(t)$ is uniformly bounded, i.e., there exists $C>0$ such that for all $t\geq 0$,

$$\sup_{X} |R(\cdot, t)| \leq C. $$

\end{lemma}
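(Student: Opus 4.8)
The plan is to bound $R$ from below and from above by separate arguments; the lower bound is soft, while the upper bound carries all the content.

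For the lower bound, the standard evolution equation for the scalar curvature along (\ref{krflow}), combined with $|Ric|^2\geq R^2/n$, gives an inequality $\partial_t R \geq \Delta_{g(t)} R + c\,R^2 + R$ for some dimensional constant $c>0$. At a point realizing $m(t):=\min_X R(\cdot,t)$ one has $\Delta_{g(t)}R\geq 0$, so the maximum principle yields $\tfrac{d}{dt}m(t)\geq c\,m(t)^2+m(t)=m(t)\big(c\,m(t)+1\big)$, and an elementary ODE comparison forces $m(t)\geq\min\{m(0),\,-1/c\}$ for all $t\geq 0$.

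For the upper bound I would start from the identities produced by differentiating (\ref{maflow}). With $\hat\omega_t:=\chi+e^{-t}(\omega_0-\chi)$, the flow equation gives $Ric(\omega(t))=-\chi-\ddbar\varphi-\ddbar\dot\varphi$, and tracing with $\omega(t)$, using $tr_{\omega(t)}\ddbar\varphi=n-tr_{\omega(t)}\hat\omega_t$, one obtains
$$R+n=e^{-t}\, tr_{\omega(t)}(\omega_0-\chi)-\Delta_{g(t)}\dot\varphi,\qquad R=-\kappa-\dot\varphi-\ddot\varphi,$$
so, since $\dot\varphi$ is uniformly bounded by Lemma \ref{0est}, an upper bound for $R$ is equivalent to a lower bound for $\ddot\varphi$. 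The base directions are handled by a parabolic Schwarz lemma: since $\chi=\tfrac1m\Phi^*\omega_{FS}$ is the pullback of a metric of bounded bisectional curvature, the Aubin--Yau computation gives $(\partial_t-\Delta_{g(t)})\log tr_{\omega(t)}\chi\leq C_1\, tr_{\omega(t)}\chi+C_2$; applying the maximum principle to $\log tr_{\omega(t)}\chi-A\varphi$ for $A\gg1$, and using the $e^{-t}$ factor in $\hat\omega_t$ together with the bounds on $\varphi,\dot\varphi$ to absorb the error terms, yields $tr_{\omega(t)}\chi\leq C$ for all $t\geq 0$; in particular $e^{-t}\, tr_{\omega(t)}\chi\to 0$.

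The remaining task — controlling the fiber directions and then closing the estimate — is where the real difficulty lies. Over $\Phi^{-1}(K')$ with $K'$ a compact subset of $X_{can}^\circ$, one exploits that the rescaled fiber metrics $e^t\omega(t)|_{X_y}$ converge to the Ricci-flat metrics $\omega_{CY,y}$, together with a \emph{rate} for this convergence, so that the fiber part of the curvature — which is $e^t$ times the scalar curvature of $e^t\omega(t)|_{X_y}$ — stays bounded; a localized maximum principle for $tr_{\omega(t)}\omega_0$, in which the mixed base--fiber terms are absorbed using the Schwarz bound above and the almost-product structure of $\omega(t)$, then gives $e^{-t}\, tr_{\omega(t)}\omega_0\leq C$ over $\Phi^{-1}(K')$. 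The delicate region is $\Phi^{-1}(X_{can}\setminus X_{can}^\circ)$, where $\omega(t)$ is not uniformly close to the pullback of $g_{can}$ and the naive collapsing model already degenerates; there the bound has to be propagated by a global maximum principle applied to a suitable combination of $\ddot\varphi$ (equivalently $R$), $tr_{\omega(t)}\chi$ and $\dot\varphi$, using the one-sided bound $R\geq -C$ already established. I expect the two genuine obstacles to be (a) the quantitative fiberwise Ricci-flat convergence uniform in $t$, and (b) pushing the curvature bound through the singular fibers: there the reaction terms in the relevant maximum principles — the traceless Ricci curvature and the curvature of the fibration, i.e.\ precisely the parts of the curvature not visible to $R$ — carry no a priori sign, and controlling them requires combining the collapsing structure with Lemma \ref{0est} and the Schwarz estimate.
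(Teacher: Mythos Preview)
The paper does not prove this lemma; it is quoted from \cite{SoT4}. Your lower bound via the evolution of $R$ is correct, and the identity $R=-\kappa-\dot\varphi-\ddot\varphi$ together with the parabolic Schwarz bound $tr_{\omega(t)}\chi\leq C$ are genuine ingredients of the Song--Tian argument. The gap is in your route to the upper bound. Over $\Phi^{-1}(K')$ you invoke a quantitative rate in the fiberwise convergence $e^{t}\omega(t)|_{X_y}\to\omega_{CY,y}$ to control the fiber curvature and then obtain $e^{-t}\,tr_{\omega(t)}\omega_0\leq C$; but this is circular, since in \cite{TWY} and the surrounding work those higher--order local statements take the scalar curvature bound of \cite{SoT4} as input, not the other way around. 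And even granting $e^{-t}\,tr_{\omega(t)}\omega_0\leq C$, the argument does not close: in $R+n=e^{-t}\,tr_{\omega(t)}(\omega_0-\chi)-\Delta_{g(t)}\dot\varphi$ the Laplacian term is not pointwise controlled, while in the evolution $(\partial_t-\Delta)\ddot\varphi=-\ddot\varphi+e^{-t}\,tr_{\omega(t)}(\omega_0-\chi)-|Ric+\omega|^2_{\omega}$ the quadratic term $-|Ric+\omega|^2$ has the wrong sign for a lower bound on $\ddot\varphi$ and is of order $R^2$ at a putative large maximum of $R$, so a direct maximum principle still fails.

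The approach of \cite{SoT4} is exactly the idea you mention only at the end for the singular fibers, applied globally from the outset: a single maximum principle on a carefully weighted combination of $\ddot\varphi$, $\dot\varphi$, $\varphi$ and $tr_{\omega(t)}\chi$, chosen so that the bad quadratic contribution is absorbed. It works uniformly on all of $X\times[0,\infty)$ with no regular/singular splitting and no appeal to fiberwise convergence rates; your local detour is neither needed nor, as it stands, sufficient.
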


We now state the relative volume comparison established in \cite{TiZ3}.

\begin{lemma} \label{rvolcom}For any $B\geq 1$, there exists $k=k(m, B)>0$ such that the following hold. Let $g(t)$ be a solution to the Ricci flow on a compact $n$-dimensional Riemannian manifold $M$ over time $0\leq t\leq r_0^2$. If
$$|Ric| \leq r_0^{-2}, ~in~ B_{g(0)}(x_0, r_0)\times [0, r_0^2], $$
 then for any $B_{g(r_0^2)}(x, r) \subset B_{g(r_0^2)}(x_0, B r_0)$ satisfying
$$R|_{t=r_0^2}\leq r^{-2}, ~in~ B_{g(r_0^2)}(x, r), $$
we have
$$\frac{Vol(B_{g(r_0^2)}(x, r), g(r_0^2))}{r^m} \geq k \frac{Vol(B_{g(0)}(x_0, r_0), g(r_0^2))}{r_0^m} .$$

\end{lemma}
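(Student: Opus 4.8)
\medskip

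\noindent\emph{Proof idea.} The statement is a parabolic analogue of Bishop--Gromov relative volume comparison, and I would obtain it from Perelman's reduced volume together with a Bishop--Gromov integration in the region where the Ricci curvature is controlled (this is the scheme of \cite{TiZ3}; here $m$ is the dimension of $M$). By the parabolic rescaling $g(t)\mapsto r_0^{-2}g(r_0^2 t)$ we may assume $r_0=1$: the hypotheses become $|Ric|\le 1$ on $B_{g(0)}(x_0,1)\times[0,1]$ and $R|_{t=1}\le r^{-2}$ on $B_{g(1)}(x,r)\subset B_{g(1)}(x_0,B)$, and we must bound $Vol(B_{g(1)}(x,r),g(1))$ below by $k(m,B)\,v_0\,r^{m}$ where $v_0:=Vol(B_{g(0)}(x_0,1),g(0))$; moreover it suffices to treat $r\le 1$, since the inclusion forces $r\le 2B$ and the case $r>1$ follows from the case $r=1$ by monotonicity of volume in the radius. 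From $\partial_t g=-2Ric$ and $|Ric|\le 1$ one gets $e^{-2}g(0)\le g(t)\le e^{2}g(0)$ on $B_{g(0)}(x_0,1)$ for all $t\in[0,1]$, so all the volumes $Vol(B_{g(0)}(x_0,1),g(t))$ are mutually comparable, and Bishop--Gromov for $g(0)$ (which has $Ric\ge-1$ there) gives $Vol(B_{g(0)}(x_0,\tfrac12),g(0))\ge c(m)\,v_0$.

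Next I would bound Perelman's reduced volume $\widetilde V(\tau)$ based at the space-time point $(x_0,1)$ (so $\tau=1-t$) from below. For $q\in B_{g(0)}(x_0,\tfrac12)$, using the minimising $g(0)$-geodesic from $x_0$ to $q$ reparametrised over $\tau\in[0,1]$ as a competitor for Perelman's $\mathcal L$-length functional --- a path that stays inside $B_{g(0)}(x_0,1)$, where $R\le m$ and the metrics $g(t)$ are uniformly equivalent --- one gets $\mathcal L$-length at most $C(m)$, hence reduced distance $\ell(q,1)\le C(m)$ there. Integrating, $\widetilde V(1)=\int_M(4\pi)^{-m/2}e^{-\ell(q,1)}\,dV_{g(0)}\ge c(m)\,Vol(B_{g(0)}(x_0,\tfrac12),g(0))\ge c(m)\,v_0$, and by the monotonicity of the reduced volume along the backward flow, $\widetilde V(\tau)\ge c(m)\,v_0$ for all $\tau\in(0,1]$.

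Finally I would feed this into Perelman's $\kappa$-noncollapsing mechanism: the lower bound on $\widetilde V_{(x_0,1)}$ at every scale $\tau\le 1$, together with $R\le r^{-2}$ on $B_{g(1)}(x,r)$ and the inclusion $B_{g(1)}(x,r)\subset B_{g(1)}(x_0,B)$, forces $Vol(B_{g(1)}(x,r),g(1))\ge k(m,B)\,c(m)\,v_0\,r^{m}$. Concretely, $\mathcal L$-geodesics issuing from $(x_0,1)$ reach a neighbourhood of $(x,1-\epsilon)$ with $\mathcal L$-length controlled by $d_{g(1)}(x_0,x)^2\le B^2$ --- this is where the dependence of $k$ on $B$ enters --- and the usual point-selection/contradiction argument near $x$ then converts the reduced-volume lower bound into a volume lower bound at scale $r$. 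Undoing the rescaling gives the lemma.

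The main obstacle is this last step: running the noncollapsing argument when the base point $(x_0,1)$ of the reduced volume is \emph{not} a point where the curvature is controlled, and keeping the estimates quantitative so that the loss is a controlled function of $B$ alone, independent of the (uncontrolled) geometry away from $x_0$. A secondary technicality is that the scalar-curvature bound is imposed only on the single time-slice $t=r_0^2$, so one must invoke the refinement of $\kappa$-noncollapsing that needs a scalar --- rather than a full --- curvature bound on one slice.
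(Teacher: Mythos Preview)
The paper does not give its own proof of this lemma: it is quoted verbatim from \cite{TiZ3} (Tian--Zhang, \emph{Relative volume comparison of Ricci Flow}) and used as a black box. So there is no in-paper argument to compare against.

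That said, your sketch is faithful to the strategy of \cite{TiZ3}: rescale to $r_0=1$; use the local Ricci bound to control Perelman's reduced distance $\ell$ on $B_{g(0)}(x_0,\tfrac12)$ at $\tau=1$ and hence bound the reduced volume $\widetilde V$ below by $c(m)\,Vol(B_{g(0)}(x_0,1),g(0))$; invoke monotonicity of $\widetilde V$; and then convert this into a volume lower bound on $B_{g(1)}(x,r)$ via a $\kappa$-noncollapsing argument that needs only a scalar-curvature bound on the final slice. Your identification of the ``main obstacle'' is exactly right: the nontrivial content of \cite{TiZ3} is precisely making the last step quantitative when the base point $(x_0,1)$ is at $g(1)$-distance up to $B$ from $x$ with no curvature control in between, and with only $R\le r^{-2}$ on one time-slice near $x$. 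Your sketch up to that point is solid, but the final paragraph is a placeholder rather than an argument --- which is appropriate, since filling it in would amount to reproducing \cite{TiZ3}.
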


We let $D_\epsilon$ be the set defined as (\ref{deset}).
\begin{lemma} \label{convex32} Let $g(t)$ be the global solution of the normalized K\"ahler-Ricci flow on $X$. There exists $L>0$ such that for any $\delta>0$ there exist $\epsilon_1, \epsilon_2 >0$ and $T>0$   so that the following hold.
\begin{enumerate}

\item $Vol( X\setminus D_{\epsilon_1}, g(t)) < \delta e^{-(n-\kappa)t}$ for all $t\geq 0$.

\item For any two points $p, q \in D_{\epsilon_1 }$ and $t>T$, there exists a continuous path $\gamma_t \subset D_{ \epsilon_2 }$ joining $p$ and $q$ such that
$$\mathcal{L}_{g(t)}(\gamma_t) \leq  L. $$
\end{enumerate}

\end{lemma}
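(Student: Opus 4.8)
\textbf{Proof proposal for Lemma \ref{convex32}.}

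The plan is to transfer the convexity statement of Proposition \ref{convex1}, which is proved for the elliptic continuity-method family $\omega(t)$ with Ricci curvature bounded below, to the Kähler--Ricci flow solution $g(t)$. The first statement is the easier one: by Lemma \ref{0est} the potential $\varphi$ and $\dot\varphi$ are uniformly bounded, so rewriting \eqref{maflow} the volume form satisfies $\omega(t)^n = e^{\dot\varphi + \varphi} \, \Omega \cdot e^{-(n-\kappa)t}$ up to the class normalization, i.e. $e^{(n-\kappa)t}\omega(t)^n$ is uniformly comparable to $\Omega$. Then I would run the same cut-off computation as in Lemma \ref{cutoff1} and Lemma \ref{smvol}, now with $\omega(t)$ in place of the elliptic family and with the reference class $[\chi] + e^{-t}[\omega_0 - \chi]$ from \eqref{classcon}: all the intersection numbers $[\chi]\cdot[\omega(t)]^{n-1}$ stay bounded, and $\int_X \rho_\epsilon\,\Omega \to 0$ as $\epsilon\to 0$ uniformly in $t$. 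This gives $\mathrm{Vol}(X\setminus D_{\epsilon_1}, g(t)) < \delta e^{-(n-\kappa)t}$ for all $t\ge 0$ once $\epsilon_1$ is small, proving (1). One also needs the Schwarz-type lower bound $\omega(t)\ge c\,\chi$ uniformly in $t$ (the analogue of Lemma \ref{schwa}), which follows from the same maximum principle argument applied to $\log\mathrm{tr}_{\omega(t)}(\chi) - K\varphi$ using Lemma \ref{0est}; this is needed so that the $\chi$-distance to $\partial D_\delta$ controls the $g(t)$-distance, exactly as in the proof of Lemma \ref{distance1}.

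For statement (2), the strategy is to reproduce the argument of Lemma \ref{distance1} and Proposition \ref{convex1} using Lemma \ref{gromov}, but the hypotheses of Lemma \ref{gromov} must be fed by the Ricci-flow comparison geometry rather than by a uniform lower Ricci bound, which we do not have along the flow. Here is where the real work lies. First, by Lemma \ref{bdyarea}-type reasoning applied to $\omega(t)$ I can choose $\epsilon$ in a dyadic window so that $\mathrm{Vol}_{g(t)}(\partial D_\epsilon) \le C(-\log\epsilon)^{-1/2} e^{-(n-\kappa)t}$ for all large $t$. Second, I need a lower volume bound for small metric balls around $p_1, p_2$ of the form $\mathrm{Vol}_{g(t)}(B_{g(t)}(p_i, r)) \ge c\, r^{2n} e^{-(n-\kappa)t}$; this is exactly what the relative volume comparison Lemma \ref{rvolcom} is designed to deliver, using the Ricci bound \eqref{curvature bound} on the domain $U$ containing a fibre together with the uniform scalar curvature bound, by comparing balls at time $t$ to a fixed ball at an earlier time and rescaling parabolically. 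Third, the diameter bound $\mathrm{diam}(X, g(t)) \le L$ needed to apply Lemma \ref{gromov} is precisely the conclusion of Theorem \ref{main1} --- so this lemma must be proved \emph{in tandem with} or as a step \emph{inside} the proof of Theorem \ref{main1}; presumably the logical order in Section 4 is to first establish a uniform diameter bound along a subsequence of times or via the $L$ coming from the relative volume comparison bootstrap, and then feed it back. Alternatively $L$ here is the diameter bound one is simultaneously establishing, and the statement should be read as: on the set of times where $\mathrm{diam}(X,g(t))\le L$ holds, the convexity (2) also holds, and these are combined in a continuity/open-closed argument.

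Assuming a diameter bound $L$ and the volume non-collapsing at scale $\epsilon_1$, the rest mirrors Lemma \ref{distance1} verbatim: the set $K_{\epsilon_1}(t) = \{x : d_{g(t)}(x,\partial D_\delta) < \epsilon_1\}$ is contained in $D_{\epsilon_1}$ by the Schwarz lemma, one chooses $\epsilon_2 < \epsilon_1$ with $(-\log\epsilon_2)^{1/2}(\epsilon_1)^{2n} \gg 1$ so that a minimal geodesic from $p$ cannot be entirely pushed through the thin set $D_{\epsilon_2}$ without violating the volume/boundary-area comparison of Lemma \ref{gromov} (applied with $E = D_{\epsilon_2}$, $r = \epsilon_1$), hence there is a point $q'$ near $q$ joined to $p$ by a minimal geodesic staying in $D_{\epsilon_2}$, and concatenating with a short geodesic from $q'$ to $q$ gives the path $\gamma_t$ with $\mathcal{L}_{g(t)}(\gamma_t) \le d_{g(t)}(p,q) + 2\delta \le L + 2\delta$; renaming $L$ absorbs the additive constant. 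The main obstacle, as indicated, is organizing the logical dependence: Lemma \ref{gromov} wants a diameter bound as \emph{input}, while the diameter bound \eqref{diambd} is the \emph{output} of Theorem \ref{main1}, so the two must be bootstrapped together, with the Ricci hypothesis \eqref{curvature bound} and Lemma \ref{rvolcom} providing the non-collapsing that breaks the circularity.
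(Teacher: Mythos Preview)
Your argument for part (1) is fine and matches the paper: the uniform bounds on $\varphi$ and $\dot\varphi$ from Lemma \ref{0est} give $e^{(n-\kappa)t}\omega(t)^n$ uniformly comparable to $\Omega$, and then the computation of Lemma \ref{smvol} goes through verbatim.

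For part (2), however, you take a genuinely different route from the paper, and the circularity you yourself flag is a real obstruction. The paper does \emph{not} attempt to rerun the Gromov-type argument (Lemma \ref{gromov}) for the evolving metrics $g(t)$. Doing so would require, as you note, both a lower Ricci bound (which $g(t)$ does not have globally) and a diameter bound (which is what Theorem \ref{main1} is supposed to deliver). Your suggestion to substitute the relative volume comparison of Lemma \ref{rvolcom} for the Ricci lower bound in Lemma \ref{gromov} is not justified: Lemma \ref{gromov} is stated and proved for manifolds with $Ric\ge -g$, and it is not clear that its conclusion survives under only a relative volume comparison hypothesis. The bootstrap you sketch to break the diameter circularity is left entirely unspecified.

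The paper's actual argument avoids both difficulties at once by working on the \emph{base}. Given $p,q\in D_{\epsilon_1}$, project to $p'=\Phi(p)$, $q'=\Phi(q)\in X_{can}^\circ$. The results of Section 2 (Proposition \ref{convex1} and Proposition \ref{contpro}), proved via the continuity method where the Ricci lower bound \emph{is} available, show that $(X_{can}^\circ,g_{can})$ is almost geodesically convex in its metric completion $(\mathcal{Z},d_\mathcal{Z})$, which has finite diameter $D$. Hence there is a path $\gamma'\subset \Phi(D_{\epsilon_2})$ from $p'$ to $q'$ with $\mathcal{L}_{g_{can}}(\gamma')\le D+\delta$. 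Lift $\gamma'$ to a path $\gamma\subset D_{\epsilon_2}$ joining $p$ and $q$. Since $\omega(t)\to\Phi^*\omega_{can}$ in $C^0$ on $D_{\epsilon_2}$ (Lemma \ref{equivalence} and its Ricci-flow analogue), for $t>T$ one has $\mathcal{L}_{g(t)}(\gamma)\le \mathcal{L}_{g_{can}}(\gamma')+\delta\le D+2\delta$. Thus $L$ comes from the diameter of the \emph{limit} space, known independently of any diameter control on $(X,g(t))$, and there is no circularity. The relative volume comparison of Lemma \ref{rvolcom} is used only afterward, in Proposition \ref{mainprop}, to convert this convexity into the diameter bound.
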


\begin{proof}  Using the fact that $\varphi(t)$ is uniformly bounded, the same argument in the proof of Proposition \ref{convex1} can show that for any $\delta>0$ there exists $\epsilon_1>0$ such that
$$Vol( X\setminus D_{\epsilon_1}, g(t)) < \delta e^{-(n-\kappa)t} $$ for all $t\geq 0$. Let $p' = \Phi(p)$ and $q'=\Phi(q)$ for any two points $p, q \in X\setminus D_{\epsilon_1}$. Since $ (X_{can}^\circ, g_{can})$ is almost geodesically convex in $(X_{can}, g_{can})$, there exists a continuous path $\gamma'$ in $\Phi(X\setminus D_{\epsilon_2})$ joining $p'$ and $q'$ such that
$$\mathcal{L}_{g_{can}}(\gamma') \leq D+\delta$$
by choosing sufficiently small $\epsilon_2>0$, where $D$ is the diameter of $(X_{can}, \omega_{can})$.
We lift $\gamma'$ to a continuous path $\gamma$ in $ X\setminus D_{\epsilon_1}$ joining $p$ and $q$. Since $\omega(t)$ converges to $\omega_{can}$ on $\Phi^{-1}(X\setminus D_{\epsilon_1})$ to $\omega_{can}$ in $C^0$-topology uniformly for as $t\rightarrow \infty$,  there exists $T>0$ such that
$$\mathcal{L}_{g(t)}(\gamma)  \leq\mathcal{L}_{g_{can}}(\gamma')  + \delta  \leq D+2\delta. $$
This completes the proof of the lemma.

\end{proof}

Now we can prove Theorem \ref{main1} as a consequence of the following proposition.
\begin{proposition} \label{mainprop} If there exists an open domain $U$ in $X$ and $\Lambda>0$ such that such that $\sup_{U\times [0, \infty)} |Ric(g(t))|_{g(t)} <\Lambda $. Then there exists $D>0$ such that for all $t\in [0, \infty)$ we have
$$diam(X, g(t)) < D .$$

\end{proposition}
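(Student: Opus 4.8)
The plan is to combine the convexity information from Lemma \ref{convex32} with the relative volume comparison of Lemma \ref{rvolcom} to control the distance between an arbitrary point of $X$ and a fixed reference fibre. First I would fix a point $x_0$ in the interior of the domain $U$ where $|\ric(g(t))|_{g(t)} < \Lambda$ holds for all $t$; after rescaling time we may assume $U$ contains a parabolic ball $B_{g(0)}(x_0, r_0) \times [0, r_0^2]$ on which $|\ric| \le r_0^{-2}$, and more importantly, since $U$ contains a full fibre $X_{z_0} = \Phi^{-1}(z_0)$, we have $X_{z_0} \subset U$. The collapsing estimates (the second lemma in Section 4) tell us that $\mathrm{diam}(X_{z_0}, g(t)) \to 0$, so for large $t$ the whole fibre sits inside a tiny $g(t)$-ball around $x_0$. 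The goal is then: for every $x \in X$ and all large $t$, bound $d_{g(t)}(x, x_0)$ by a uniform constant, and separately handle the finitely many remaining times in a compact interval $[0, T]$ where the flow metrics are all comparable to $g(0)$.

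The core step is the distance bound for large $t$. Given $x \in X$, I would distinguish two cases. If $x \in D_{\epsilon_1}$ (the good region of Lemma \ref{convex32}), then since $x_0$ also lies in $D_{\epsilon_1}$ for $\epsilon_1$ small enough (its image $\Phi(x_0) = z_0$ being a fixed point of $X_{can}^\circ$), part (2) of Lemma \ref{convex32} directly produces a path from $x$ to $x_0$ of $g(t)$-length at most $L$, so $d_{g(t)}(x, x_0) \le L$. If instead $x \in X \setminus D_{\epsilon_1}$, I want to push $x$ into $D_{\epsilon_1}$ by a short path. This is where the volume comparison enters: part (1) of Lemma \ref{convex32} says $\mathrm{Vol}(X \setminus D_{\epsilon_1}, g(t)) < \delta e^{-(n-\kappa)t}$, while Corollary \ref{volr}-type lower bounds (the scalar curvature bound plus Lemma \ref{rvolcom} applied with base point $x_0$ in $U$) give $\mathrm{Vol}(B_{g(t)}(x, r), g(t)) \ge c\, r^{2n} e^{-(n-\kappa)t}$ for a definite constant $c$ and all $r$ up to some fixed scale. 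Comparing these, if $\delta$ is chosen small relative to $c$ and the relevant power of $r$, the ball $B_{g(t)}(x, r)$ cannot be entirely contained in $X \setminus D_{\epsilon_1}$; hence there is a point $x' \in D_{\epsilon_1}$ with $d_{g(t)}(x, x') < r$. Then $d_{g(t)}(x, x_0) \le r + d_{g(t)}(x', x_0) \le r + L$, uniformly.

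Assembling these: for $t \ge T$ we get $\mathrm{diam}(X, g(t)) \le 2(L + r) =: D_1$, and for $t \in [0, T]$ we use that the metrics $g(t)$ vary continuously (indeed smoothly) on the compact interval and the class identity \eqref{classcon} keeps things bounded, so $\sup_{t \in [0,T]} \mathrm{diam}(X, g(t)) =: D_2 < \infty$; take $D = \max(D_1, D_2)$. The main obstacle I anticipate is making the volume comparison rigorous: Lemma \ref{rvolcom} requires the scale $r_0$ with $|\ric| \le r_0^{-2}$ on a \emph{parabolic} neighborhood $B_{g(0)}(x_0, r_0) \times [0, r_0^2]$, and one must check that the hypothesis $\sup_{U \times [0,\infty)} |\ric|_{g(t)} < \Lambda$, together with the fact that $U$ is open and contains a whole fibre, furnishes such an $x_0$ and $r_0$ with constants uniform in the final time $t$ — this involves a rescaling argument (applying Lemma \ref{rvolcom} to the time-shifted flow based at $t - r_0^2$) and verifying that $x_0$ stays well inside $U$ along the flow, which should follow from the curvature bound controlling how fast the metric can shrink or expand near $x_0$. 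A secondary subtlety is that the lower volume bound on $B_{g(t)}(x,r)$ is needed for \emph{all} $x \in X$, not just $x$ near $x_0$; one routes this through the relative comparison with the fixed reference ball around $x_0$, so the scalar curvature bound $\sup_X |R| \le C$ is exactly what is required to apply Lemma \ref{rvolcom} at the target ball.
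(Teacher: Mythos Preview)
Your approach uses the same two ingredients as the paper --- the convexity statement of Lemma~\ref{convex32} and the relative volume comparison of Lemma~\ref{rvolcom} --- but there is a circularity in your volume step that the paper's framing avoids. You assert a uniform lower bound $\mathrm{Vol}(B_{g(t)}(x,r),g(t)) \ge c\, r^{2n} e^{-(n-\kappa)t}$ valid for \emph{every} $x \in X$, but Lemma~\ref{rvolcom} only delivers this for target balls contained in $B_{g(t)}(x_0, Br_0)$, and the constant $k = k(m, B)$ depends on $B$. So to invoke the lemma with a uniform constant you would already need to know that $d_{g(t)}(x, x_0)$ is bounded, which is precisely what you are trying to prove. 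Your ``secondary subtlety'' paragraph names the right lemma but misdiagnoses the obstruction: the global scalar curvature bound is indeed one hypothesis of Lemma~\ref{rvolcom}, but it does nothing about the $B$-dependence of $k$. (By contrast, the parabolic-ball issue you flag as the ``main obstacle'' is routine: one simply applies Lemma~\ref{rvolcom} to the flow on $[t-r_0^2,t]$ with $r_0$ fixed by $\Lambda$.)

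The paper resolves the circularity by arguing by contradiction, which localizes the comparison to a fixed scale. If $\limsup_t \mathrm{diam}(X,g(t)) = \infty$, then for some $t'>T$ there is (by connectedness of $X$) a point $Q'$ with $2L < d_{g(t')}(P, Q') < 4L$. The ball $B_{g(t')}(Q', L)$ then sits inside $B_{g(t')}(P, 5L)$, so Lemma~\ref{rvolcom} applies with a \emph{fixed} $B$ determined only by $L$ and $r_0$, giving $\mathrm{Vol}(B_{g(t')}(Q', L)) \ge k\, e^{-(n-\kappa)t'}$ with $k$ independent of $t'$. On the other hand every $q \in D_{\epsilon_1}$ satisfies $d_{g(t')}(P, q) \le L$ by Lemma~\ref{convex32}(2), so the triangle inequality forces $B_{g(t')}(Q', L) \subset X \setminus D_{\epsilon_1}$; choosing $\delta$ small relative to $k$ in Lemma~\ref{convex32}(1) gives the contradiction. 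Your own argument becomes correct with the same one-line fix: instead of treating an arbitrary $x \in X \setminus D_{\epsilon_1}$, take a point $x$ at distance exactly $L + r$ from $x_0$ (such a point exists whenever the diameter exceeds $L+r$), so that $B_{g(t)}(x,r) \subset B_{g(t)}(x_0, L+2r)$ and the comparison constant is uniform.
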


\begin{proof} By the result in \cite{SoT4}, the scalar curvature is uniformly bounded along the flow and so we can apply Lemma \ref{rvolcom} for  relative volume comparison. We then apply Lemma \ref{convex32} with the same notations. We fixed a base point $P\in X^\circ$. Suppose there exist a point $Q\in X$ and $t>0$ such that
$$2L <d_{\omega(t)}(P, Q) < 4 L. $$
By Lemma \ref{rvolcom} for the relative volume comparison, there exists $k=k(X, g_0)>0$ such that
$$Vol_{\omega(t)}(B_{\omega(t)}(Q, L) \geq k L^{2\kappa} e^{-(n-\kappa)t}. $$
We will apply Lemma \ref{convex32} by choosing
$$\delta = 100^{-1}k L^{2\kappa} $$ and $T>0$ accordingly.
Suppose %
$$\limsup_{t\rightarrow \infty} diam(X, g(t)) = \infty. $$
Then there exist $t'>T$ and $Q'\in X$ such that
$$2L <d_{\omega(t')}(P, Q') < 4 L. $$
Obviously,
$$B_{\omega(t')}(Q', L) \subset X\setminus D_{\epsilon_1}.$$
Then applying Lemma \ref{convex32}  again, we have
$$ k L^{2\kappa} e^{-(n-\kappa)t}\leq Vol_{\omega(t')}(B_{\omega(t')}(Q', L) )\leq \delta e^{-(n-\kappa)t}.$$
Contradiction. \end{proof}

Corollary \ref{mainc1} immediately follows from Proposition \ref{mainprop} and the result proved in \cite{TWY} that the curvature tensor $g(t)$ is uniformly bounded on any compact subset of $X^\circ$ when the general fibre of $\Phi: X \rightarrow X_{can}$ is a complex torus.

\section{Proof of Theorem \ref{KRF: minimal model}}

We will prove Theorem \ref{KRF: minimal model} in this section. Let $X$ be a projective manifold with semi-ample canonical bundle of Kodaira dimension $1$. The canonical model $X_{can}$ of $X$ must be a nonsingular Riemann surface and we let $\Phi: X \rightarrow X_{can}$ be the morphism induced by the pluricanonical system. We assume that the general fibre is a complex torus as in the assumptions of Theorem \ref{KRF: minimal model}.  The set of  critical values of $\Phi$  coincide with $\mathcal{S} = X_{can}\setminus X_{can}^\circ$ and we let
$$\mathcal{S} = \{ p_1, ..., p_N\}. $$
Let $g_{can}$ be the twisted K\"ahler-Einstein metric on $X_{can}^\circ$ induced by the twisted K\"ahler-Einstein current $\omega_{can}$ satisfying 
$$Ric(\omega_{can}) = - \omega_{can} + \omega_{WP}$$
on $X_{can}$. 
For each $p_i\in \mathcal{S}$, we define for $\delta>0$, 
$$B_{i, \delta}= \{ p\in X_{can}~|~ d_\chi(p, p_i) < \delta\}, $$
where $d_\chi$ is the distance function with respect to the smooth K\"ahler metric $\chi$. 
Let $g(t)$ be the solution of the normalized K\"ahler-Ricci flow on $X$ and $$f_{i,\delta} =  \int_{ \Phi^{-1}(B_{i, \delta})}\Omega.$$ Then the following holds as $\Phi^{-1}(B_{i, \delta})$ converges to a single fibre as $\delta \rightarrow 0$.
\begin{lemma} \label{smallvol}

$$
\lim_{\delta\rightarrow 0} f_{i,\delta} = 0. 
$$

\end{lemma}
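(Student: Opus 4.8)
The plan is to read the statement as a purely measure-theoretic fact about the fixed smooth volume form $\Omega$, completely independent of the flow: since $f_{i,\delta} = \int_{\Phi^{-1}(B_{i,\delta})}\Omega$ and the sets $\Phi^{-1}(B_{i,\delta})$ decrease, as $\delta\to 0$, to the single fibre $\Phi^{-1}(p_i)$, the limit of $f_{i,\delta}$ is the $\Omega$-measure of that fibre. So the entire content is that a compact complex submanifold (or subvariety) $\Phi^{-1}(p_i)$ of real codimension $2$ in $X$ has zero volume with respect to the smooth measure $\Omega$. First I would invoke the continuity from above of the finite measure associated to $\Omega$: because $B_{i,\delta}\downarrow\{p_i\}$ as $\delta\downarrow 0$ (these are $\chi$-metric balls shrinking to a point), we have $\Phi^{-1}(B_{i,\delta})\downarrow \Phi^{-1}(p_i)$, hence $\lim_{\delta\to 0} f_{i,\delta} = \int_{\Phi^{-1}(p_i)}\Omega$.

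Second, I would argue $\int_{\Phi^{-1}(p_i)}\Omega = 0$. Since $\Phi^{-1}(p_i)$ is a proper analytic subset of $X$ (the fibre over a point of the curve $X_{can}$), it has Lebesgue measure zero in $X$: locally it is cut out by a nonzero holomorphic function, and the zero set of a nonzero holomorphic function on a domain in $\mathbb{C}^n$ is a null set for Lebesgue measure. As $\Omega$ is a smooth volume form, it is mutually absolutely continuous with Lebesgue measure in any coordinate chart, so a Lebesgue-null set is $\Omega$-null. Covering the compact set $\Phi^{-1}(p_i)$ by finitely many such charts gives $\int_{\Phi^{-1}(p_i)}\Omega = 0$, and combining with the previous paragraph yields $\lim_{\delta\to 0} f_{i,\delta} = 0$.

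There is really no substantive obstacle here; the only point requiring any care is making precise that $\Phi^{-1}(B_{i,\delta})$ genuinely decreases to $\Phi^{-1}(p_i)$ and that one may apply dominated/monotone convergence — this is immediate since all the $f_{i,\delta}$ are bounded by $\int_X\Omega<\infty$ and the integrands $\mathbf{1}_{\Phi^{-1}(B_{i,\delta})}$ decrease pointwise to $\mathbf{1}_{\Phi^{-1}(p_i)}$. Alternatively, and perhaps even more cleanly, one avoids analytic-set arguments entirely by noting $f_{i,\delta}\le \int_{\{d_\chi(\Phi(\cdot),p_i)<\delta\}}\Omega$ and that this last quantity tends to $0$ as $\delta\to0$ simply because $\bigcap_{\delta>0}\{d_\chi(\Phi(\cdot),p_i)<\delta\}$ has empty interior — in fact $\Omega$ assigns finite mass to $X$ and the nested intersection is the null set $\Phi^{-1}(p_i)$, so continuity from above of the finite measure $\Omega$ finishes it. I would present the short version: $f_{i,\delta}$ is the $\Omega$-measure of a decreasing family of open sets whose intersection is a complex-analytic subset of positive codimension, hence of $\Omega$-measure zero.
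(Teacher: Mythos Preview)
Your proposal is correct and is essentially the same approach as the paper's. The paper's entire justification is the one-line remark that $\Phi^{-1}(B_{i,\delta})$ converges to a single fibre as $\delta\to 0$; you have simply made this precise by invoking continuity from above of the finite measure $\Omega$ and the fact that a proper analytic subset has measure zero.
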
 
We now estimate the volume of $\Phi^{-1}(B_{i, \delta})$.

\begin{lemma} \label{smallvo}There exists $C>0$ such that 

$$Vol_{g(t)} \left( \Phi^{-1}(B_{i, \delta}) \right) \leq C  e^{-(n-1)t} f_{i, \delta}. $$

\end{lemma}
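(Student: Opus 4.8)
The plan is to estimate $Vol_{g(t)}(\Phi^{-1}(B_{i,\delta})) = \int_{\Phi^{-1}(B_{i,\delta})} \omega(t)^n$ by expanding $\omega(t)^n$ using the class decomposition and the Monge–Amp\`ere structure of the flow. Recall from \eqref{classcon} that $[\omega(t)] = [\chi] + e^{-t}[\omega_0 - \chi]$, so writing $\theta(t) = \chi + e^{-t}\omega_0$ (a fixed reference in the class, up to an irrelevant smooth term) and $\omega(t) = \theta(t) + \ddbar\varphi$, one has $\omega(t)^n = \sum_{l=0}^{n} \binom{n}{l}(\ddbar\varphi)^l \wedge \theta(t)^{n-l}$ in the appropriate cohomological sense. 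Since $\dim_{\mathbb C} X_{can} = 1$ here ($\kappa = 1$), the pullback $\chi = \Phi^*\omega_{FS}/m$ satisfies $\chi^2 = 0$, so only the terms with at most one factor of $\chi$ survive and every term carries a factor $e^{-(n-1)t}$ from the $n-1$ copies of $e^{-t}\omega_0$. Thus $\omega(t)^n \leq C e^{-(n-1)t}\,(\chi + \omega_0)\wedge \omega_0^{n-1}$ up to $\ddbar$-exact pieces, and one wants to show the integral of these pieces over $\Phi^{-1}(B_{i,\delta})$ is also controlled by $C e^{-(n-1)t} f_{i,\delta}$.

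First I would handle the $\ddbar$-exact contributions by integration by parts exactly as in the proof of Lemma \ref{smvol}: choose a cutoff function $\rho$ which is $1$ on $\Phi^{-1}(B_{i,\delta})$ and supported in $\Phi^{-1}(B_{i,2\delta})$, write $\int_{\Phi^{-1}(B_{i,\delta})}\rho\,\ddbar\varphi \wedge \omega(t)^l \wedge \theta(t)^{n-1-l} = \int \varphi\,\ddbar\rho \wedge \omega(t)^l \wedge \theta(t)^{n-1-l}$, and use that $\varphi$ is uniformly bounded in $L^\infty$ (Lemma \ref{0est}). The key point is that $\ddbar\rho$ is supported in $\Phi^{-1}(B_{i,2\delta})$ and, because $\chi^2 = 0$, the remaining form again contributes a factor $e^{-(n-1)t}$, while the integral over $\Phi^{-1}(B_{i,2\delta})$ of the bounded reference form is $\leq C f_{i,2\delta} \leq C f_{i,\delta}$ (the constant absorbing the comparison between $B_{i,\delta}$ and $B_{i,2\delta}$, or one simply enlarges the definition of $f$). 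Then I would reduce the main term $\int_{\Phi^{-1}(B_{i,\delta})} \chi \wedge \omega_0^{n-1}$ and $\int_{\Phi^{-1}(B_{i,\delta})}\omega_0^n$ to a multiple of $f_{i,\delta} = \int_{\Phi^{-1}(B_{i,\delta})}\Omega$, using that $\omega_0^n$ and $\chi\wedge\omega_0^{n-1}$ are both bounded by a constant times the smooth volume form $\Omega$ on all of $X$.

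The main obstacle, as in Lemma \ref{smvol}, is bookkeeping the various $\ddbar$-exact terms and making sure that every single one of them genuinely carries the full factor $e^{-(n-1)t}$ and is localized to $\Phi^{-1}(B_{i,2\delta})$ rather than spreading over $X$; this requires care because $\omega(t)$ itself (not just $\theta(t)$) appears in the intermediate wedge products, so one must argue inductively on $l$, peeling off one $\ddbar\varphi$ at a time and re-expressing $\omega(t)^l$ in terms of $\theta(t)$ at each stage, each time re-localizing via the cutoff. Once this is organized, combining the main-term bound with the integration-by-parts bound yields $Vol_{g(t)}(\Phi^{-1}(B_{i,\delta})) \leq C e^{-(n-1)t} f_{i,\delta}$ with $C$ independent of $t$ and $\delta$, which is the claim; Lemma \ref{smallvol} then guarantees the right-hand side tends to $0$ as $\delta \to 0$ uniformly in $t$.
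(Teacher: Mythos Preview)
Your approach is far more elaborate than necessary and overlooks the one-line argument the paper actually uses. The parabolic Monge--Amp\`ere equation \eqref{maflow} rewrites as
\[
\omega(t)^n \,=\, e^{\varphi + \partial_t\varphi}\, e^{-(n-\kappa)t}\,\Omega,
\]
and since $\kappa=1$ here while $\|\varphi\|_{L^\infty}$ and $\|\partial_t\varphi\|_{L^\infty}$ are uniformly bounded by Lemma~\ref{0est}, one gets the \emph{pointwise} inequality $\omega(t)^n \le C e^{-(n-1)t}\Omega$ on all of $X$. Integrating this over $\Phi^{-1}(B_{i,\delta})$ gives the lemma immediately, with no cohomological expansion, no cutoffs, and no integration by parts.

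Beyond being unnecessarily complicated, your route has a genuine gap in the constants. The cutoff $\rho$ you propose (equal to $1$ on $\Phi^{-1}(B_{i,\delta})$ and supported in $\Phi^{-1}(B_{i,2\delta})$) has $\ddbar\rho$ of size $\sim \delta^{-2}$ relative to any fixed reference form, so the integration-by-parts terms $\int \varphi\,\ddbar\rho \wedge \omega(t)^l \wedge \theta(t)^{n-1-l}$ pick up a factor $\delta^{-2}$ that you never account for; this would have to be absorbed by showing the remaining integral over the annulus is $O(\delta^2 f_{i,\delta})$, which you have not done and which is not obvious when $\omega(t)^l$ (not just $\theta(t)^l$) appears in the integrand. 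Your side claim $f_{i,2\delta}\le C f_{i,\delta}$ with $C$ independent of $\delta$ is also not justified near a singular fibre. The moral is that the $L^\infty$ bound on $\partial_t\varphi$ (not just on $\varphi$) is exactly the input that collapses the whole argument to a pointwise volume-form comparison; use it.
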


\begin{proof} By the uniform $L^\infty$-estimate of $\varphi$ and $\ddt{\varphi}$ in Lemma \ref{0est}, there exists $C>0$ such that for all $t\geq 0$, we have 
$$\omega(t)^n \leq C e^{-(n-1)t} \Omega,$$
where $\Omega$ is a smooth volume form on $X$.  Then the lemma follows by the following calculations 
$$Vol_{g(t)} \left( \Phi^{-1}(B_{i, \delta}) \right) =  \int_{ \Phi^{-1}(B_{i, \delta})} \omega(t)^n
\leq  C e^{-(n-1)t}\int_{ \Phi^{-1}(B_{i, \delta})}\Omega. $$

\end{proof}

The following lemma follows immediately from Proposition \ref {contpro2} (also see \cite{Zh17})  $X_{can}$ is nonsingular.
\begin{lemma} The metric completion of $(X_{can}^\circ, g_{can})$ is homeomorphic to $X_{can}$. 

\end{lemma}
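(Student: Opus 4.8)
The plan is to reduce the statement to Proposition \ref{contpro2} together with Proposition \ref{contpro}. Since $X$ has Kodaira dimension $1$ and $K_X$ is semi-ample, the canonical model $X_{can}$ is a nonsingular projective curve, hence a smooth Riemann surface; in particular it trivially has only orbifold singularities. Proposition \ref{contpro}(3) identifies the metric completion $(Z_{can}, d_{Z_{can}})$ of $(X_{can}^\circ, g_{can})$ with the Gromov--Hausdorff limit $(\mathcal{Z}, d_{\mathcal{Z}})$ of the collapsing family of twisted K\"ahler--Einstein metrics on $X$, and Proposition \ref{contpro2} shows $(\mathcal{Z}, d_{\mathcal{Z}})$ is homeomorphic to $X_{can}$ precisely because $X_{can}$ has at worst orbifold singularities. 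Composing these two identifications produces a homeomorphism from the metric completion of $(X_{can}^\circ, g_{can})$ onto $X_{can}$, which is the assertion.

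Because $X_{can}$ is in fact globally smooth, I would also record the short self-contained argument, which is the $\kappa = 1$ case of the proof of Proposition \ref{contpro2} with the orbifold uniformization removed. Fix a smooth K\"ahler metric $\chi$ on $X_{can}$ in the class $[\omega_{can}]$ and write $\omega_{can} = \chi + \ddbar \varphi_{can}$ with $\varphi_{can} \in PSH(X_{can}, \chi) \cap C^0(X_{can})$, smooth on $X_{can}^\circ$; by \cite{ST2} the density satisfies $\| \omega_{can}/\chi \|_{L^p(X_{can}, \chi)} < \infty$ for some $p > 1$. As established in Steps 2--3 of the proof of Proposition \ref{contpro2}, a Schwarz-lemma computation gives $g_{can} \geq c\, g_\chi$ on $X_{can}^\circ$ for some $c > 0$, where $g_\chi$ is the K\"ahler metric of $\chi$, so the identity map $(X_{can}^\circ, g_{can}) \to (X_{can}, g_\chi)$ is distance non-increasing and hence extends to a surjective Lipschitz map $\mathcal{F} \colon (Z_{can}, d_{Z_{can}}) \to (X_{can}, g_\chi)$. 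On the other hand, the local distance estimate of Lemma \ref{holder2}, applied with $\omega_{orb} = \chi$ an honest smooth metric and Morrey's embedding used directly on coordinate balls of $X_{can}$ around each $p_i \in \mathcal{S}$, yields $\alpha \in (0,1)$ and $C > 0$ with $d_{g_{can}}(p, q) \leq C\, d_\chi(p, q)^\alpha$ for all $p, q \in X_{can}^\circ$; hence every $\chi$-Cauchy sequence in $X_{can}^\circ$ is $g_{can}$-Cauchy, which forces $\mathcal{F}$ to be injective. A continuous bijection between compact metric spaces is a homeomorphism, and the proof is complete.

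I do not expect any genuine obstacle here: the statement is a formal consequence of results already proved in Section 2, and the only analytic input beyond Lemma \ref{holder2} is the $L^p$-integrability of $\omega_{can}/\chi$ from \cite{ST2}, which was already invoked in Step 1 of Proposition \ref{contpro2}. The situation is in fact strictly easier than in Proposition \ref{contpro2}, since $X_{can}$ carries a genuine smooth K\"ahler metric $\chi$, so no orbifold uniformization and no orbifold Monge--Amp\`ere theory are needed; the only point requiring a line of care is to note that the Schwarz-lemma and distance estimates quoted above hold on the open part $X_{can}^\circ$ and pass to the metric completion, which is immediate from the uniform two-sided equivalence of $g_{can}$ with a H\"older power of $g_\chi$ near $\mathcal{S}$.
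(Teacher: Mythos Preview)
Your proposal is correct and takes essentially the same approach as the paper: the lemma follows immediately from Proposition \ref{contpro2} (together with Proposition \ref{contpro}(3)) once one notes that in Kodaira dimension $1$ the canonical model $X_{can}$ is a nonsingular Riemann surface and hence trivially has only orbifold singularities. Your supplementary self-contained sketch is also along the right lines, though note that Lemma \ref{holder2} as stated requires $\omega$ to be an honest (orbifold) K\"ahler metric on all of $X_{can}$, which is why the proof of Proposition \ref{contpro2} passes through an approximating sequence $\omega_j$ rather than applying the distance estimate directly to $\omega_{can}$; this does not affect the validity of your primary argument.
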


We define $$\gamma_\delta (t)= \sup_{p, q \in \partial B_{i, \delta}} d_{g(t)}(p, q)$$ as the diameter of $\partial B_{i, \delta}$ with respect to $g(t)$. 

\begin{lemma}\label{bdl} For any $\epsilon>0$, there exist $\delta_0>0$ and $T(\delta)>0$ for $0< \delta <\delta_0$ such that for all $0<\delta<\delta_0$ and $t>T(\delta)$, we have 
$$ \gamma_\delta(t) < \frac{\epsilon}{8} .$$
\end{lemma}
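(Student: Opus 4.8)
The plan is to control the diameter of the small circle $\partial B_{i,\delta}$ by combining two facts: first, that $\partial B_{i,\delta}$ shrinks to a single (multiple) fibre of $\Phi$ as $\delta\to 0$, so that the fibre metrics collapse uniformly; and second, that the base directions contribute a length comparable to $\delta$ (up to a logarithmic factor coming from the behavior of $g_{can}$ near $p_i$), which can be made as small as we wish. More precisely, I would first fix $\epsilon>0$. Any two points $p,q\in\partial B_{i,\delta}$ can be joined by a path that is the concatenation of a ``horizontal'' piece lying over a path in $\partial B_{i,\delta}\subset X_{can}$ of $\chi$-length comparable to $\delta$, and a ``vertical'' piece lying in a single fibre $X_y$. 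I then estimate the $g(t)$-length of each piece separately.

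For the vertical piece, by the second part of the $C^0$-collapsing lemma (the fibrewise convergence of $e^t\omega(t)|_{X_y}$ to the Ricci-flat metric $\omega_{CY,y}$, uniformly for $y$ in compact subsets of $X_{can}^\circ$), the $g(t)$-diameter of each fibre over $\partial B_{i,\delta}$ is bounded by $C(\delta)\, e^{-t/2}$ for some constant $C(\delta)$ depending on $\delta$ (but with $\partial B_{i,\delta}$ a fixed compact subset of $X_{can}^\circ$ once $\delta$ is fixed). Hence, after choosing $T(\delta)$ large enough, the vertical contribution is $<\epsilon/16$ for all $t>T(\delta)$. For the horizontal piece, I use Lemma \ref{equivalence}/Lemma \ref{0est}: on $D_\epsilon$ (equivalently on $\Phi^{-1}$ of a compact subset of $X_{can}^\circ$) the metric $\omega(t)$ is uniformly equivalent to $\Phi^*\omega_{can}+e^{-t}\omega_{SF}$, so the $g(t)$-length of a horizontal lift of a path in $\partial B_{i,\delta}$ is bounded by $2\,\mathcal{L}_{g_{can}}(\partial B_{i,\delta})+o(1)$ as $t\to\infty$. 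Since $g_{can}$ has bounded local potential and is smooth on $X_{can}^\circ$, and $X_{can}$ is a nonsingular Riemann surface, the $g_{can}$-length of the circle $\partial B_{i,\delta}$ tends to $0$ as $\delta\to 0$ (one can see this from the explicit local form of $g_{can}$ near $p_i$, which is at worst of the form $|z|^{-2}(\log|z|)^{a}$ type but still integrates to a length $\to 0$ on circles $\{|z|=\delta\}$, or simply from the fact that $g_{can}$ extends to a genuine metric on $X_{can}$ in the orbifold/homeomorphic sense). Thus there is $\delta_0>0$ so that for all $\delta<\delta_0$ the horizontal contribution is $<\epsilon/16$ once $t$ is large.

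Combining, for $\delta<\delta_0$ and $t>T(\delta)$ we get $d_{g(t)}(p,q)<\epsilon/16+\epsilon/16=\epsilon/8$ for any $p,q\in\partial B_{i,\delta}$, which is the claim. The main obstacle I expect is making precise the claim that $\mathcal{L}_{g_{can}}(\partial B_{i,\delta})\to 0$ as $\delta\to0$: this requires knowing the asymptotics of the twisted K\"ahler--Einstein metric $g_{can}$ near the singular value $p_i$, i.e.\ that its local potential, though possibly unbounded in derivatives, still yields circles of $g_{can}$-length $o(1)$. This should follow from the known local description of $g_{can}$ near finite critical values of $\Phi$ for elliptic fibrations (as in \cite{ST1}), together with the fact, already recorded above, that the metric completion of $(X_{can}^\circ,g_{can})$ is homeomorphic to the compact surface $X_{can}$, so that small metric balls around the point corresponding to $p_i$ have small diameter. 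A secondary technical point is the interchange of the two limits $\delta\to0$ and $t\to\infty$, which is why $T$ is allowed to depend on $\delta$.
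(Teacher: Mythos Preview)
Your horizontal--vertical decomposition and the use of fibre collapse plus $C^0$ convergence of $g(t)$ to $g_{can}$ are exactly the right ingredients, and this is also what the paper's one-line proof relies on. The difference lies in how the horizontal piece is handled.

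You choose the circle $\partial B_{i,\delta}$ itself as the horizontal path and need $\mathcal{L}_{g_{can}}(\partial B_{i,\delta})\to 0$. Your first justification (explicit local asymptotics of $g_{can}$ from \cite{ST1}) may work for elliptic surfaces but is not established in the generality of Theorem~\ref{KRF: minimal model}. Your second justification --- that the metric completion of $(X_{can}^\circ,g_{can})$ is homeomorphic to $X_{can}$, hence small neighborhoods of $p_i$ have small diameter --- is correct, but it yields only that the \emph{diameter} of $\partial B_{i,\delta}$ in $(X_{can}^\circ,g_{can})$ tends to $0$, not that the \emph{length} of that particular curve does. From bounded potentials or an $L^p$ volume density one does not immediately get pointwise control of the conformal factor along the fixed circle $\{|z|=\delta\}$; at best one gets smallness for a sequence of radii, which is awkward for the quantifier structure of the lemma.

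The paper sidesteps this by invoking the almost geodesic convexity of $(X_{can}^\circ,g_{can})$ in its metric completion, established in Section~2 (Proposition~\ref{convex1} and Proposition~\ref{contpro}): for any two points $p',q'\in\partial B_{i,\delta}$ there is a path $\gamma'\subset D_{\epsilon_2}\subset X_{can}^\circ$ with $\mathcal{L}_{g_{can}}(\gamma')$ close to $d_{\mathcal{Z}}(p',q')$, and the latter tends to $0$ since the completion is $X_{can}$ with $p_i$ a single point. One then lifts $\gamma'$ to $X$ and uses the uniform $C^0$ convergence of $g(t)$ to $\Phi^*g_{can}$ on the compact set $\Phi^{-1}(D_{\epsilon_2})$, together with the collapsing fibre diameter for the endpoints. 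The point is that the connecting path is produced by the convexity machinery, not chosen in advance to be $\partial B_{i,\delta}$; this both avoids the circle-length issue and guarantees the path stays in a fixed compact subset of $X^\circ$ where the $C^0$ convergence is available.

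Your argument becomes correct if you simply replace ``horizontal lift of $\partial B_{i,\delta}$'' by ``horizontal lift of a short $g_{can}$-path in $D_{\epsilon_2}$ furnished by geodesic convexity'', and then it coincides with the paper's proof.
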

\begin{proof} The lemma follows from the geodesic convexity of $(X_{can}^\circ, g_{can})$ in its metric completion and the fact that $g(t)$ converges uniformly to $g_{can}$ uniformly away from singular fibres.

\end{proof}

The following lemma is the key estimate in this section.

\begin{lemma} \label{sec5key} For any $\epsilon>0$, there exist $\delta>0$ and $T>0$ such that for all $t\geq T$, we have 
$$ diam( \Phi^{-1}(B_{i, \delta}), g(t)) < \epsilon, $$
where the distance is calculated in $(X, g(t))$ and $i=1, 2, ..., N$. 

\end{lemma}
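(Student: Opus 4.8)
The plan is to bound the diameter of $\Phi^{-1}(B_{i,\delta})$ in two pieces: a ``vertical'' contribution coming from the fibres inside $\Phi^{-1}(B_{i,\delta})$, and a ``horizontal'' contribution coming from how far apart the fibres sit over the small disc $B_{i,\delta}$. For the horizontal part, observe that any two points $p,q\in \Phi^{-1}(B_{i,\delta})$ can be joined, after moving to $\partial B_{i,\delta}$, by a path controlled by Lemma \ref{bdl}: for $t>T(\delta)$ the $g(t)$-diameter of the preimage of the boundary circle $\partial B_{i,\delta}$ is as small as we like once we shrink $\delta$ and take $t$ large. So it suffices to estimate the distance from an arbitrary point of $\Phi^{-1}(B_{i,\delta})$ to $\Phi^{-1}(\partial B_{i,\delta})$.

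First I would use the volume collapse. By Lemma \ref{smallvo} and Lemma \ref{smallvol}, $Vol_{g(t)}(\Phi^{-1}(B_{i,\delta}))\le C e^{-(n-1)t} f_{i,\delta}$ with $f_{i,\delta}\to 0$ as $\delta\to 0$, while the relative volume comparison (Lemma \ref{rvolcom}, applicable since the scalar curvature is uniformly bounded and the Ricci curvature is bounded near the fibre over $p_i$ by the hypothesis \eqref{curvature bound}) gives a lower bound $Vol_{g(t)}(B_{g(t)}(x,r))\ge c\, r^{2n} e^{-(n-1)t}$ for $r$ up to a fixed scale. Hence if some point $x\in\Phi^{-1}(B_{i,\delta})$ had $d_{g(t)}(x,\Phi^{-1}(\partial B_{i,\delta}))\ge \epsilon/4$, the ball $B_{g(t)}(x,\epsilon/8)$ would be entirely contained in $\Phi^{-1}(B_{i,\delta})$, forcing $c\,(\epsilon/8)^{2n} e^{-(n-1)t}\le C e^{-(n-1)t} f_{i,\delta}$, i.e. $f_{i,\delta}\ge c'(\epsilon)$. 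Choosing $\delta$ small enough that $f_{i,\delta}<c'(\epsilon)$ rules this out, so every point of $\Phi^{-1}(B_{i,\delta})$ lies within $g(t)$-distance $\epsilon/4$ of $\Phi^{-1}(\partial B_{i,\delta})$.

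Combining the two pieces: given $p,q\in \Phi^{-1}(B_{i,\delta})$, travel from $p$ to a point $p'\in\Phi^{-1}(\partial B_{i,\delta})$ in distance $\le \epsilon/4$, from $q$ to $q'\in\Phi^{-1}(\partial B_{i,\delta})$ in distance $\le\epsilon/4$, and from $p'$ to $q'$ in distance $\le \gamma_\delta(t) + (\text{fibre diameter over }\partial B_{i,\delta})$. The fibre diameter over points of $K'\subset X_{can}^\circ$ tends to $0$ uniformly as $t\to\infty$ (the rescaled fibre metrics converge to Ricci-flat metrics, so the unrescaled fibre diameters decay like $e^{-t/2}$), and $\gamma_\delta(t)<\epsilon/8$ by Lemma \ref{bdl}; after enlarging $T$ we get $d_{g(t)}(p',q')<\epsilon/4$. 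Summing, $d_{g(t)}(p,q)<\epsilon$, which is the claim. The main obstacle is making the volume-collapse argument genuinely local near the possibly singular fibre $\Phi^{-1}(p_i)$ — this is exactly where the Ricci bound \eqref{curvature bound} on a neighborhood $U$ of that fibre is needed, so that Lemma \ref{rvolcom} can be invoked there; everything else is a packaging of the already-established uniform $C^0$ fibrewise convergence and the convexity statements of Section 2.
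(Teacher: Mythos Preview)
Your overall architecture matches the paper's proof exactly: bound the distance from any point of $\Phi^{-1}(B_{i,\delta})$ to its boundary by comparing the small volume of $\Phi^{-1}(B_{i,\delta})$ against the relative volume lower bound from Lemma~\ref{rvolcom}, then close up using Lemma~\ref{bdl} and the collapse of the regular fibres. The assembly via the triangle inequality is also the same.

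There is, however, a genuine error in how you invoke Lemma~\ref{rvolcom}. You write that the Ricci curvature is bounded ``near the fibre over $p_i$'' by hypothesis~\eqref{curvature bound}, and in your last paragraph you repeat that the Ricci bound is needed on a neighborhood $U$ of the \emph{singular} fibre $\Phi^{-1}(p_i)$. This is backwards. The point $p_i$ is a critical value, and the hypothesis~\eqref{curvature bound} (and, in the torus-fibre case, the curvature estimate of \cite{TWY}) gives a Ricci bound only on compact subsets of $X^\circ=\Phi^{-1}(X_{can}^\circ)$, i.e.\ \emph{away} from the singular fibres. Near $\Phi^{-1}(p_i)$ one has only the global scalar curvature bound of \cite{SoT4}.

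The way the paper bridges this gap is precisely the ingredient you have omitted: the uniform diameter bound of Theorem~\ref{main1}. In Lemma~\ref{rvolcom} the Ricci bound is required at the \emph{base point} $x_0$, while at the comparison point $x$ one needs only the scalar curvature bound together with the containment $B_{g(t)}(x,r)\subset B_{g(t)}(x_0,Br_0)$. Choosing $x_0$ to lie on a regular fibre in the domain $U$ where $|Ric|$ is bounded, the diameter bound $diam(X,g(t))\le D$ from Theorem~\ref{main1} guarantees this containment for a fixed $B$, independently of $t$ and of where $x$ sits inside $\Phi^{-1}(B_{i,\delta})$. The paper also uses that $Vol(B_{g(t)}(x_0,r_0),g(t))\sim e^{-(n-1)t}$ at the regular base point to produce the factor $e^{-(n-1)t}$ in your claimed lower bound. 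Once you reroute the justification of the volume lower bound through Theorem~\ref{main1} in this way, your argument becomes correct and coincides with the paper's.
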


\begin{proof}  %
Let
 $$d_{i,\delta} (t) = \frac{1}{2} \sup_{q \in \Phi^{-1} (B_{i, \delta})} d_{g(t)}\left(q, \partial \left( \Phi^{-1} (B_{i, \delta}) \right)\right),$$
where $d_{g(t)}$ is the distance function with respect to $g(t)$.
Then  one can find a geodesic ball $B(t)$ of radius $d_{i,\delta} (t)$ with respect to the metric $g(t)$ lying entirely in $\Phi^{-1} (B_{i, \delta})$.  By Lemma \ref{smallvo} there exists $C>0$ independent of $t$ and $\delta$ such that  
$$Vol_{g(t)} (B(t)) \leq Vol_{g(t)} (\Phi^{-1}(B_{i,\delta})) \leq C e^{-(n-1)t} f_{i, \delta}.$$

Since the diameter of $g(t)$ is uniformly bounded by Theorem \ref{main1} and the Ricci curvature of $g(t)$ is uniformly bounded away from $\mathcal{S}$ for all $t$, we can apply the relative volume comparison in Lemma \ref{rvolcom}: there exists $c>0$ independent of $t$ and $\delta$ such that 
$$ce^{-(n-1)t} d_{i, \delta}(t)^{2n}  \leq Vol_{g(t)} (B(t)) < C e^{-(n-1)t} f_{i, \delta}.$$
This implies that 
$$ d_{i, \delta}(t) \leq (c^{-1} C f_{i, \delta})^{\frac{1}{2n}} \rightarrow 0 $$
 as $\delta \rightarrow 0$, in other words, for any $\epsilon>0$, there exists $\delta>0$ such that for all $t\geq 0$, we have 
\begin{equation}\label{smball}
d_{i, \delta} (t) < \frac{\epsilon}{8}.
\end{equation}

Let $X_y = \Phi^{-1}(y)$ be the fibre of $\Phi$ over $y\in X_{can}$. For fixed $\delta$ chosen above, we can choose $T>0$ such that for all $t\geq T$, 
\begin{equation}\label{fibresm}
 \sup_{y\notin \cup_{i=1}^N B_{i, \delta}} diam( X_y, g(t)) < \frac{\epsilon}{8}
 \end{equation}
because the fibre diameter uniformly goes to $0$ away from singular fibres.  Combining (\ref{smball}), (\ref{fibresm}) and Lemma \ref{bdl}, we have for all $t\geq T$, 
$$diam(\Phi^{-1}(B_{i, \delta}), g(t)) < \epsilon, $$
where the diameter is computed in the ambient space $(X, g(t))$.

\end{proof}

The following lemma holds since $g(t)$ converges in $C^0$ to $g_{can}$ away from singular fibres and the fibre diameter  with respect to $g(t)$ uniformly tends to zero away from $\Phi^{-1} (B_{i, \delta} )$ as $t\rightarrow \infty$.

\begin{lemma} \label{appraw} 

\begin{equation}
\lim_{\delta \rightarrow 0} \limsup_{t\rightarrow \infty} d_{GH}\big((X\setminus \left( \cup_{i=1}^N \Phi^{-1} (B_{i, \delta} )\right) ,g(t)),~(X_{can}\setminus  \left( \cup_{i=1}^N  B_{i, \delta} \right),g_{can})\big) =0.
\end{equation}

\end{lemma}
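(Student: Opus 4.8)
The statement to prove is Lemma~\ref{appraw}: that the Gromov--Hausdorff distance between $(X\setminus(\cup_i \Phi^{-1}(B_{i,\delta})),g(t))$ and $(X_{can}\setminus(\cup_i B_{i,\delta}),g_{can})$ goes to zero as we first let $t\to\infty$ and then $\delta\to 0$. The natural candidate for an $\epsilon$-Gromov--Hausdorff approximation is the restriction of the fibration map $\Phi$ itself. So the plan is: fix $\delta>0$, let $M_{t,\delta}=X\setminus(\cup_i\Phi^{-1}(B_{i,\delta}))$ with the metric $g(t)$, and $N_\delta = X_{can}\setminus(\cup_i B_{i,\delta})$ with $g_{can}$; then verify that $\Phi\colon M_{t,\delta}\to N_\delta$ is (i) almost surjective and (ii) an almost-isometry, both with errors tending to $0$ as $t\to\infty$ (and uniformly enough in $\delta$ that the iterated limit vanishes).

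\textbf{Step 1: almost-surjectivity.} Since $\Phi$ is a proper submersion over $X_{can}^\circ$, every point of $N_\delta$ lies in the image of $M_{t,\delta}$; in fact $\Phi(M_{t,\delta})=N_\delta$ exactly. So there is nothing to prove here beyond noting that $\Phi$ restricted to $M_{t,\delta}$ is genuinely onto $N_\delta$.

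\textbf{Step 2: the distance estimate.} I would show that for $p',q'\in N_\delta$ with lifts $p,q\in M_{t,\delta}$,
\begin{equation}
\big| d_{g(t)}(p,q) - d_{g_{can}}(\Phi(p),\Phi(q)) \big| \le o(1) \quad \text{as } t\to\infty,
\end{equation}
where the $o(1)$ may depend on $\delta$. The upper bound $d_{g(t)}(p,q)\le d_{g_{can}}(\Phi(p),\Phi(q))+o(1)$ comes from lifting a near-geodesic $\gamma'$ in $N_\delta$ from $\Phi(p)$ to $\Phi(q)$ (which stays in a compact subset of $X_{can}^\circ$, possibly after shrinking to a slightly larger excised region, but here the excised sets are fixed metric balls $B_{i,\delta}$, so $N_\delta$ is already a fixed compact manifold with boundary inside $X_{can}^\circ$), connecting its lift and then traveling along a fibre; by the uniform $C^0$-convergence $\omega(t)\to\omega_{can}$ on compact subsets of $X^\circ$ (the first statement of the Lemma preceding Lemma~\ref{rvolcom}, or Lemma~\ref{equivalence}) the lifted path has $g(t)$-length close to $\mathcal{L}_{g_{can}}(\gamma')$, and the fibre-diameter term is $o(1)$ since fibre diameters tend to zero away from singular fibres. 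The lower bound $d_{g(t)}(p,q)\ge d_{g_{can}}(\Phi(p),\Phi(q))-o(1)$ follows because $\Phi$ is distance non-increasing up to an $o(1)$ factor: $\omega(t)\ge(1-o(1))\Phi^*\omega_{can}$ on $M_{t,\delta}$ by the same $C^0$-convergence, so the $\Phi$-image of any $g(t)$-path has $g_{can}$-length at most $(1+o(1))$ times the original $g(t)$-length.

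\textbf{Step 3: assembling the GH bound and the main obstacle.} Combining Steps 1 and 2, $\Phi|_{M_{t,\delta}}$ is an $\epsilon(t,\delta)$-GH approximation with $\lim_{t\to\infty}\epsilon(t,\delta)=0$ for each fixed $\delta$, which already gives $\limsup_{t\to\infty}d_{GH}(M_{t,\delta},N_\delta)=0$ for each $\delta$, hence the iterated limit. The point where care is needed — and what I expect to be the main subtlety rather than a serious obstacle — is the interplay between the two parameters: the error in the upper-bound argument of Step~2 contains a fibre-diameter term whose decay rate as $t\to\infty$ a priori degenerates as $\delta\to 0$ (fibres near $\mathcal{S}$ collapse more slowly), and the near-geodesic $\gamma'$ in $N_\delta$ might need to skirt close to $\partial B_{i,\delta}$. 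Since the lemma only asserts an \emph{iterated} limit ($t\to\infty$ first, then $\delta\to 0$), we are allowed to choose $T=T(\delta)$ after $\delta$, so this degeneration is harmless: for each fixed $\delta$ the fibre diameters over the compact set $N_\delta\subset X_{can}^\circ$ do tend to zero uniformly, and the $C^0$-convergence $\omega(t)\to\omega_{can}$ is uniform on $\Phi^{-1}(N_\delta)$. Thus the proof reduces to quoting these uniform-on-compacta convergence statements and a routine path-lifting/projection argument; no new estimate beyond those already established in Sections 2 and 4 is required.
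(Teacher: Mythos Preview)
Your proposal is correct and follows exactly the approach the paper indicates: the paper's own justification is the single sentence that the lemma ``holds since $g(t)$ converges in $C^0$ to $g_{can}$ away from singular fibres and the fibre diameter with respect to $g(t)$ uniformly tends to zero away from $\Phi^{-1}(B_{i,\delta})$ as $t\to\infty$,'' and your Steps~1--3 are precisely a fleshed-out version of this, using $\Phi$ as the Gromov--Hausdorff approximation and invoking those two ingredients for the two sides of the distance comparison. Your discussion in Step~3 of why the iterated-limit structure absorbs the $\delta$-dependence of the convergence rates is in fact more careful than anything the paper writes down.
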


Theorem \ref{KRF: minimal model} immediately follows from Lemma \ref{sec5key} and Lemma \ref{appraw} since the metric completion $(X_{can}^\circ, g_{can})$ is homeomorphic to $X_{can}$.

\end{document}